\numberwithin{equation}{section}
\definecolor{violet}{rgb}{0.580,0.,0.827}
\newcommand{\LLL}[1]{{\left\vert\kern-0.25ex\left\vert\kern-0.25ex\left\vert #1 
    \right\vert\kern-0.25ex\right\vert\kern-0.25ex\right\vert}}
\newcommand{\ols}[1]{\mskip.5\thinmuskip\overline{\mskip-.5\thinmuskip {#1} \mskip-.5\thinmuskip}\mskip.5\thinmuskip} % overline short
\newcommand\dD{\mathrm{d}}
\def\eps{\varepsilon }
\newcommand\br{\begin{remark}}
\newcommand\er{\end{remark}}
\newcommand\bp{\begin{pmatrix}}
\newcommand\ep{\end{pmatrix}}
\newcommand{\be}{\begin{equation}}
\newcommand{\ee}{\end{equation}}
\newcommand\ba{\begin{equation}\begin{aligned}}
\newcommand\ea{\end{aligned}\end{equation}}
\newcommand\ds{\displaystyle}
\newcommand{\beg}{\begin{example}}
\newcommand{\eeg}{\end{exaplem}}
\newcommand{\bpr}{\begin{proposition}}
\newcommand{\epr}{\end{proposition}}
\newcommand{\bt}{\begin{theorem}}
\newcommand{\et}{\end{theorem}}
\newcommand{\bc}{\begin{corollary}}
\newcommand{\ec}{\end{corollary}}
\newcommand{\bl}{\begin{lemma}}
\newcommand{\el}{\end{lemma}}
\newcommand{\bd}{\begin{definition}}
\newcommand{\ed}{\end{definition}}
\newcommand{\brs}{\begin{remarks}}
\newcommand{\ers}{\end{remarks}}
\newtheorem{theorem}{Theorem}[section]
\newtheorem{proposition}[theorem]{Proposition}
\newtheorem{corollary}[theorem]{Corollary}
\newtheorem{lemma}[theorem]{Lemma}
\newtheorem{remark}[theorem]{Remark}
\newtheorem{definition}[theorem]{Definition}
\newtheorem{example}[theorem]{Example}
\newcommand{\N}{{\mathbb N}}
\newcommand{\R}{{\mathbb R}}
\newcommand\bx{{\bm x}}
\newcommand{\by}{\bm y}
\newcommand{\bu}{\bm u}
\newcommand\bv{{\bm v}}
\newcommand\cA{{\mathcal A}}
\newcommand\cB{{\mathcal B}}
\newcommand\cD{{\mathcal D}}
\newcommand\cE{{\mathcal E}}
\newcommand\cF{{\mathcal F}}
\newcommand\cM{{\mathcal M}}
\newcommand\cR{{\mathcal R}}
\newcommand\cU{{\mathcal U}}
\newcommand\cV{{\mathcal V}}
\newcommand\cW{{\mathcal W}}
\newcommand\scA{{\mathscr A}}
\newcommand\scC{{\mathscr C}}
\newcommand\scH{{\mathscr H}}
\newcommand\scS{{\mathscr S}}
\numberwithin{equation}{section}
\title{
Large coupling in a FitzHugh-Nagumo neural network:\\
quantitative and strong convergence results}
\date {\today}
\begin{document}
\maketitle

\centerline{\scshape Alain Blaustein}
{\footnotesize
    \centerline{Universit\'e Paul Sabatier, 
Institut de Math\'ematiques de Toulouse,}
\centerline{
118 route de Narbonne - F-31062 TOULOUSE Cedex 9, France;}
    \centerline{\textit{email}:  \email{alain.blaustein@math.univ-toulouse.fr}}
}

\bigskip

\begin{abstract}
We consider a mesoscopic model for a spatially extended FitzHugh-Nagumo neural network and prove that in the regime where short-range interactions dominate, the probability density of the potential throughout the network concentrates into a Dirac distribution whose center of mass solves the classical non-local reaction-diffusion
FitzHugh-Nagumo system.
In order to refine our comprehension of this regime, we focus on the blow-up profile of this concentration phenomenon.
Our main purpose here consists in deriving two
quantitative and strong convergence estimates proving that the profile is Gaussian: the first one in a $L^1$ functional framework and the second in a weighted $L^2$ functional setting. We develop original 
relative entropy techniques to prove the first result whereas our second result relies on propagation of regularity.
%In \cite{BF}, we proved that the profile of concentration is Gaussian by providing a weak convergence result. Our main purpose here consists in strengthening this result  by deriving two
%quantitative and strong convergence estimates: the first one in a $L^1$ functional framework and the second in a weighted $L^2$ functional setting. We develop original 
%relative entropy techniques to prove the first result whereas our second result relies on propagation of regularity.
\end{abstract}

\maketitle
\vspace{0.5cm}
\noindent
\textbf{\textit{Keywords: }} Diffusive limit, relative entropy, FitzHugh-Nagumo, neural network.
\\

%\vspace{0.25cm}
\noindent
\textbf{\textit{Mathematics Subject Classification (2010): }}
35B40 -- 35Q92 --
 82C32 -- 92B20
\tableofcontents

\section{Introduction}
\label{sec:1}
\subsection{Physical model and motivations}
Over the last century, mathematical models were built in order to describe biological neural activity, laying the groundwork for computational neuroscience. We mention the pioneer work A. Hodgkin and A. Huxley \cite{HH} who derived a precise model for the voltage dynamics of a nerve cell submitted to an external input. However a general and precise description of cerebral activity seems out of reach, due to the number of neurons, the complexity of their behavior and the intricacy of their interactions. Therefore, numerous simplified models arose from neuroscience over the last decade allowing to recover some of the behaviors observed in regimes or situations of interest. They may usually be interpreted as the mean-field limit of stochastic microscopic models. We mention integrate-and-fire neural networks \cite{CCP,CPSS,roux}, time-elapsed neuronal models \cite{CCDR,CHE1,CHE,PPS} and also voltage-conductance firing models \cite{PPS,Perthame/Salort13}. In this article, we study a FitzHugh-Nagumo neural field represented by its distribution $\mu(t,\bx,\bu)$ depending on time $t$, position $\bx \in K$ with $K$ a compact set of $\R^d$, and $\bu=(v,w)\in\R^2$ where $v$ stands for the membrane potential and $w$ is an adaptation variable. The distribution $\mu$ is normalized by the total density $\rho_0(\bx)$ of neurons at position $\bx$. Therefore $\mu$ is a non-negative function taken in 
$
\ds
\scC^0
\left(\,
\R^+
\times 
K\,,\,
L^1
\left(
\R^2
\right)
\right)
$ which verifies
\[
\int_{\R^2}
\,
\mu(t,\bx,\bu)\,\dD \bu
\,=\,
1
,\quad
\forall
\left(
t,\bx
\right)
\,\in\,
\R^+
\times 
K\,,\]
and which solves the following McKean-Vlasov equation (see \cite{limite-hydro-crevat,mfieldlimitcrevat,mlimit3} for other instances of such model)
\begin{equation*}
	\ds\partial_t \, \mu
	\,+\,
	\partial_v 
	\left( 
	\left(N(v)
	-w
	- \mathcal{K}_{\Phi}[\rho_0\,\mu]\right) \mu\right) \,+\,
	\partial_w
	\left( 
	A\left(v,w\right)\mu\right) 
	\,-\,
	\partial_v^2 \mu\,=\,0\,,
\end{equation*}
where the non-linear term $\mathcal{K}_{\Phi}[\rho_0\,\mu]\, \mu$ is induced by non-local electrostatic interactions: we suppose that neurons interact through Ohm's law and that the conductance between two neurons is given by an interaction kernel $\Phi:K^2\rightarrow \R$ which depends on their position, this yields
\[
\mathcal{K}_{\Phi}[\rho_0\,\mu](t,\bx,v)
\,=\,
\int_{K\times\R^{2}}
\Phi(\bx,\bx')\, (v-v')\,\rho_0(\bx')\mu(t,\,\bx',\bu')
\dD\bx'\,\dD \bu'\,.
\]
The other terms in the McKean-Vlasov equation are associated to the individual behavior of each neuron, driven here by the model of R. FitzHugh and J. Nagumo in \cite{FHN1,FHN2}. On the one hand, the drift $N\in \scC^2\left(\R\right)$ is a confining non-linearity: setting
$
\ds
\omega(v)
\,=\,
N(v)/v
$ we suppose
\begin{subequations}
	\begin{numcases}{}
		\label{hyp1:N}
		\ds
		\limsup_{|v|\,\rightarrow\,+ \infty}\,
		\omega(v)
		\,=\,-\infty\,,\\[0,9em]
		\label{hyp2:N}
		\ds
		\sup_{|v|\,\geq\,1}\,
		\frac{
			\left|\omega(v)\right|
		}{|v|^{p-1}}
		\,<\, + \infty\,,
	\end{numcases}
\end{subequations}
for some $p \geq 2$. For instance, these assumptions are met by the original model proposed by R. FitzHugh and J. Nagumo, where $N$ is a cubic non-linearity
\[
N(v)\,=\, v \,-\, v^3\,.
\] 
On the other hand, $A$ drives the dynamics of the adaptation variable, it is given by
\begin{equation*}
	A(v,w) \,=\, a\,v \,-\, b\,w \,+\, c\,,
\end{equation*} 
where $a$, $c \in \R$ and $b>0$. We also add a diffusion term with respect to $v$ in order to take into account random fluctuations of the voltage. 
This type of model has been rigorously derived as the mean-field limit of microscopic model in \cite{mlimit1, mlimit2, mfieldlimitcrevat, mlimit4, mlimit3}. Well posedeness of the latter equation is well known and will not be discussed here. We refer to \cite[Theorem $2.3$]{BF} for a precise discussion over that matter.
\subsection{Regime of strong short-range interactions}\label{formal derivation} In this article, we consider a situation where $\Phi$ decomposes as follows
\[
\Phi(\bx,\bx')\,=\,\Psi(\bx,\bx')\,+\,\frac{1}{\eps}\,\delta_0(\bx-\bx')\,,
\]
where the Dirac mass $\delta_0$ accounts for short-range interactions whereas the interaction kernel $\Psi$ models long-range interactions: it is "smoother" than $\delta_0$ since we suppose 
\begin{equation}
	\label{hyp2:psi}
	\Psi
	\,
	\in
	\,
	\scC^0
	\left(
	K_{\bx},\,
	L^1
	\left(
	K_{\bx'}
	\right)
	\right)\;\quad\text{and}\quad
	\sup_{\bx\in K} \int_{K}\left|\Psi(\bx',\bx)\right|\,+\,\left|\Psi(\bx,\bx')\right|^r \,\dD\bx' \,<\, +\infty\,,
\end{equation}
for some $r>1$ (we denote $r'$ its conjugate: $r' = (r-1)/r$). We point out that our assumptions on $\Psi$ are quite general, this is in line with other works which put a lot of effort into considering general interactions \cite{JPS}. \\
The scaling parameter $\eps>0$ represents the magnitude of short-range interactions; we focus on the regime where they dominate, that is, when $\eps \ll 1$. From these assumptions, the equation on $\mu$ can be rewritten as
\begin{equation}
	\label{kinetic:eq}
	\ds\partial_t \, \mu^\eps
	\,+\,
	\partial_v 
	\left( 
	\left(N(v)
	-w
	- \mathcal{K}_{\Psi}[\rho_0^\eps\,\mu^\eps]\right) \mu^\eps\right) \,+\,
	\partial_w
	\left( 
	A\left(v,w\right)\mu^\eps\right) 
	\,-\,
	\partial_v^2 \mu^\eps\,=\,\frac{\rho_0^\eps}{\eps}\,\partial_v \left(
	\left( v-\cV^\eps\right) \mu^\eps\right)
	,
\end{equation}
where the averaged voltage and adaptation variables $\ds\, \mathcal{U}^\eps 
= 
\left(\,
\mathcal{V}^\eps ,
\mathcal{W}^\eps\, 
\right)
$ at a spatial location $\bx$ are defined as
\begin{equation}\label{macro:q}
	\left\{
	\begin{array}{ll}
		\displaystyle \cV^\eps(t,\bx) 
		&\,=\,
		\ds\int_{\R^2}v~\mu^\eps(t,\bx,\bu)\,\dD\bu\,,
		\\[1.1em]
		\displaystyle \cW^\eps(t,\bx) 
		&\,=\,
		\ds\int_{\R^2}w~\mu^\eps(t,\bx,\bu)\,\dD\bu\,.
	\end{array}
	\right.
\end{equation}
Previous works already went through the analysis of the asymptotic $\eps \ll 1$ and it was proven that in this regime the voltage distribution undergoes a concentration phenomenon. We mention \cite{limite-hydro-crevat} in which is investigated this asymptotic in a deterministic setting using relative entropy methods and also \cite{HJ quininao/touboul} in which authors study this model in a spatially homogeneous framework following a Hamilton-Jacobi approach. 
These works conclude that as $\eps$ vanishes,
$\mu^\eps$ converges as follows
\begin{equation*}
	\mu^\eps(t,\bx,\bu)
	\;\underset{\eps \rightarrow 0}{\longrightarrow}
	\; \delta_{\cV(t,\bx)}(v)
	\otimes \bar{\mu}(t,\bx,w)\,,
\end{equation*}
where the couple $\ds\left(\cV,\bar{\mu}
\right)\,$ solves\\[-0.3em]
\begin{equation}
	\label{macro:eq}
	\left\{
	\begin{array}{l}
		\displaystyle \partial_t\,\cV
		\,=\, 
		N(\cV)
		\,-\,
		\cW
		\,-\,
		\mathcal{L}_{\rho_0}[\,\cV\,]
		\,,\\[0.9em]
		\displaystyle \partial_t\, \bar{\mu}
		\,+\,
		\partial_w
		\left(
		A(\cV\,,\,w)\, \bar{\mu}
		\right) \,=\, 0\,,
	\end{array}
	\right.
\end{equation}
with
$$
\cW
\,=\,
\int_{\R}
w\,
\bar{\mu}
\left(t\,,\,\bx\,,\,w\right)\,\dD w\,,
$$
and where $\ds\mathcal{L}_{\rho_0}
\left[\,\cV\,
\right]$ is a non local operator given by
\[
\mathcal{L}_{\rho_0}
\left[\,\cV\,
\right]
=
\cV \,\Psi*_r\rho_0 \,-\, \Psi*_r(\rho_0 \cV)\;,
\]
where $*_r$ is a shorthand notation for the convolution on the right side of any function $g$ with $\Psi$
\[
\Psi*_r g(\bx)
\;=\,
\int_{K}
\Psi(\bx,\bx')\,g(\bx')\,\dD \bx'\,.
\]
Then, the concentration profile of $\mu^\eps$ around $\delta_{\cV}$ was investigated in \cite{BF}. The strategy consists in considering the following re-scaled version $\nu^\eps$ of $\mu^\eps$
\begin{equation}\label{def:nueps}
	\mu^\eps
	\left(t,\,\bx,\,\bu\right) 
	\,=\,
	\frac{1}{\theta^\eps}\,
	\nu^\eps\left(t,\,\bx,\,
	\frac{v-\mathcal{V}^\eps}{\theta^\eps},\,
	w-\mathcal{W}^\eps
	\right),
\end{equation}
where $\theta^\eps$ shall be interpreted as the concentration rate of $\mu^\eps$ around its mean value $\mathcal{V}^\eps$. Under the scaling $\theta^\eps\,=\,\sqrt{\eps}$, it is proven that
\begin{align}\label{limit nu}
	\nu^\eps\;
	&\underset{\eps\rightarrow 0}{\sim}\;
	\nu\,:=\,
	\cM_{\rho_0^\eps}\otimes \bar{\nu}\,,
\end{align}
where $\bar\nu$ solves the following linear transport equation
\begin{equation}\label{bar nu:eq}
	%\label{bar nu:eq}
	\partial_t\, \bar\nu
	\,-\,b\,
	\partial_{w}
	\left( 
	w \,\bar\nu\right) \,=\, 0\,,
\end{equation}
and where the Maxwellian $\cM_{\rho_0^\eps}$
is defined as
\begin{equation*}
	%  \label{Maxwellian}
	\mathcal{M}_{\rho_0^\eps(\bx) }(v)=
	\sqrt{\frac{\rho_0^\eps(\bx)}{2\pi}}\exp
	\left(
	-\,\rho_0^\eps(\bx) \,\frac{|v|^2}{2}
	\right).
\end{equation*}
The latter convergence translates on $\mu^\eps$ as follows
\begin{equation}\label{expansion mu eps order 1}
	\mu^\eps(t,\bx,\bu) \underset{\eps \rightarrow 0}{\sim}
	\mathcal{M}_{
		\rho_0\,
		\left|
		\theta^\eps
		\right|^{-2}
	}
	\left(
	v\,-\,\cV
	\right)
	\otimes
	\ols{\mu}(t,\bx,\bu)
	\,,
\end{equation}
with $\theta^\eps\,=\,\sqrt{\eps}$. More precisely, it was proven in \cite{BF} that \eqref{expansion mu eps order 1} occurs up to an error of order 
$
\ds
\eps
$ in the sense of weak convergence in some probability space. Our goal here is to strengthen the results obtained in \cite{BF} by providing strong convergence estimates for \eqref{expansion mu eps order 1}. The general strategy consists in deducing  \eqref{expansion mu eps order 1} from \eqref{limit nu}. The main difficulties to achieve this is twofold. On the one hand, since the norms associated to strong topology are usually not scaling invariant, the time homogeneous scaling $\theta^\eps
\,=\,
\sqrt{\eps}\,$ comes down to considering well-prepared initial conditions. Therefore we find an appropriate scaling $\theta^\eps$ which enables to treat general initial condition. On the other hand, the proof is made challenging by the cross terms between $v$ and $w$ in \eqref{kinetic:eq}. This issue is analogous to the difficulty induced by the free transport operator in the context of kinetic theory \cite{El Ghani/ Masmoudi,Herda,Herda/Rodrigues}. In our context, we propagate regularity in order to overcome this difficulty and obtain error estimates. \\

This article is organized as follows. We start with Section \ref{sec:main:th}, in which we carry out an heuristic in order to derive the appropriate scaling $\theta^\eps$ and then state our two main results. We first provide convergence estimates for $\mu^\eps$ in a $L^1$ setting,
which is the natural space to consider for such type of conservative problem (see Theorem \ref{th:12}). This result is a direct consequence of the convergence of the re-scaled distribution $\nu^\eps$ (see Theorem \ref{th1}) which is obtained in Section \ref{sec1b}. Then we propose convergence estimates in a weighted $L^2$ setting (see Theorem \ref{th21}). Once again this result is the consequence of the convergence of $\nu^\eps$ (see Theorem \ref{th:2}) provided in Section \ref{sec2}. We emphasize that the latter result allows us to recover the optimal convergence rates obtained in \cite{BF} and to achieve pointwise convergence estimates with respect to time. This analysis is in line with \cite{mlimit3}, which focuses on the regime of weak interactions between neurons (this corresponds to the asymptotic $\ds\eps \rightarrow +\infty$ in equation \eqref{kinetic:eq}).

\section{Heuristic and main results}\label{sec:main:th}
As mentioned before the time homogeneous scaling $\theta^\eps
\,=\,
\sqrt{\eps}\,$ in the definition \eqref{def:nueps} of $\nu^\eps$ comes down to considering well-prepared initial conditions. We seek for a stronger result which also applies for ill-prepared initial conditions.
To overcome this difficulty, our strategy consists in adding the following constraint on the concentration rate
\[
\theta^\eps
\left(
t\,=\,0
\right)\,=\,1\,.\] 
% see Section \ref{sec2} for more details.
In this setting, the equation on $\nu^\eps$ is obtained performing the following change of variable
\begin{equation}
	\label{change:var}
	(t\,,\,v\,,\,w)\mapsto \left(t\,,\,\frac{v-\cV^\eps}{\theta^\epsilon}\,,\, w-\cW^\eps\right)
\end{equation}
in equation \eqref{kinetic:eq}. Following computations detailed in \cite{BF}, it yields
\begin{equation}\label{nu:eq:-1}
	\partial_t\, \nu^\eps
	\,+\,
	\mathrm{div}_{\bu}
	\left[
	\,\mathbf{b}^\eps_0\,\nu^\eps \right]
	\,=\,
	\frac{1}{|\theta^\eps|^2}\,
	\partial_v
	\left[
	\left(
	\frac{1}{2} \,\frac{\dD}{\dD t}\,
	\left|\theta^\eps\right|^2
	+ 
	\frac{\rho_0^\eps}{\eps}
	\,
	\left|\theta^\eps\right|^2
	\right)
	v \,\nu^\eps
	+
	\partial_v \nu^\eps
	\right]\,,
\end{equation}
where $\mathbf{b}^\eps_0$ is given by
\begin{equation}\label{def:b0}
	\ds
	\mathbf{b}^\eps_0
	\left(
	t,\bx,\bu
	\right)
	\,=\,
	\begin{pmatrix}
		\ds\,
		\left(
		\theta^\eps
		\right)^{-1}
		B^{\eps}_0
		\left(t,\bx,
		\theta^\eps\, v, w
		\right)\,\\[0,9em]
		\ds
		A_0
		\left(\theta^\eps v, w
		\right)
	\end{pmatrix}
\end{equation}
and $B^{\eps}_0$ is defined as
\[
B^{\eps}_0(t\,,\,\bx\,,\,\bu)
\,=\,
N(\cV^\eps
\,+\,
v)
\,-\,
N(\cV^\eps) 
\,-\,
w
\,-\,
v\, \Psi *_r \rho^\eps_0(\bx)
\,-\,
\mathcal{E}
\left(
\mu^\eps
\right),
\]
with $
\ds\cE
\left(\mu^\eps
\right)$ the following error term
\begin{equation}\label{error}
	\cE
	\left(
	\mu^\eps\left(t,\bx,\,\cdot\,
	\right)
	\right)
	\;=\,
	\;\int_{\R^2} N(v)\,
	\mu^\eps
	\left(t,\bx,\bu
	\right)\,
	\dD \bu\,-\, N\left(\cV^\eps\left(t,\bx
	\right)\right)\,,
\end{equation}
and where $A_0$ is the linear version of $A$
\begin{equation*}
	A_0(\bu) = A(\bu)-A(\mathbf{0})\,.
\end{equation*}
Considering the leading order in \eqref{nu:eq:-1} and since we expect concentration with Gaussian profile $\mathcal{M}_{\rho_0^\eps }$, $\theta^\eps$ should verify 
\begin{equation*}
	\left\{
	\begin{array}{l}
		\ds
		\frac{1}{2} \,\frac{\dD}{\dD t}\,
		\left|\theta^\eps\right|^2
		\,+\, 
		\frac{\rho_0^\eps}{\eps}\,
		\left|\theta^\eps\right|^2
		\,=\,
		\rho_0^\eps\,,
		\\[1.1em]
		\ds \theta^\eps
		\left(
		t\,=\,0
		\right)\,=\,1\,,
	\end{array}\right.
\end{equation*}
whose solution is given by the following explicit formula
\begin{equation}\label{expression for theta eps}
	\theta^\eps(t,\,\bx)^2
	\,=\,
	\eps
	\left(1 \,-\,
	\exp{
		\left(-
		(\,2\,\rho_0^\eps(\bx)\, t\,)\,/\,\eps
		\right)}
	\right)
	\,+\,
	\exp{
		\left(-(\,2\,\rho_0^\eps(\bx)\, t\,)\,/\,\eps
		\right)}\,
	.
\end{equation}
Therefore, we obtain a time dependent $\theta^\eps$, which is of order $\sqrt{\eps}$, up to an exponentially decaying correction to authorize ill-prepared initial conditions. With this choice, the equation on $\nu^\eps$ rewrites:
\begin{equation}\label{nu:eq}
	\partial_t\, \nu^\eps
	\,+\,
	\mathrm{div}_{\bu}
	\left[
	\,\mathbf{b}^\eps_0\,\nu^\eps\, \right]
	\,=\,
	\frac{1}{
		\left|\theta^\eps\right|^2}\,
	\cF_{\rho_0^\eps}
	\left[\,
	\nu^\eps\,
	\right]\,,
\end{equation}
where
$
\displaystyle
\mathbf{b}^\eps_0
$ is given by \eqref{def:b0} and the Fokker-Planck operator is defined as
\[
\cF_{\rho_0^\eps}
\left[\,
\nu^\eps\,
\right]\,
=\,
\partial_v
\left[\,\rho_0^\eps\,
v \,\nu^\eps
+
\partial_v \, \nu^\eps\,
\right]\,.
\] 

Let us now precise our assumptions on the initial data. We suppose the following uniform boundedness condition on the spatial distribution $\rho_0^\eps$
\begin{equation}\label{hyp:rho0}
	\rho_0^\eps \in 
	\scC^0
	\left(\,
	K\,
	\right)
	\quad\mathrm{and}\quad
	m_* \leq \rho_0^\varepsilon \leq 1/m_*\,,
\end{equation} 
as well as moment assumptions on the initial data
\begin{subequations}
	\begin{numcases}{}
		\label{hyp1:f0}
		\sup_{\bx \in K}
		\,\int_{\R^2}
		|\bu|^{2p} \mu^\eps_0
		\left(\bx,\bu
		\right)\,
		\dD \bu
		\,\leq\, m_p\,,\\[0,9em]
		\label{hyp2:f0}
		\ds
		\int_{K \times \, \R^2}\,
		|\bu|^{2p r'}
		\,\rho_0^\eps(\bx)\,\mu^\eps_0
		\left(\bx,\bu
		\right)
		\dD \bu\,\dD \bx
		\,\leq\, \ols{m}_p\,,
	\end{numcases}
\end{subequations}
where $p$ and $r'$ are given in \eqref{hyp2:N} and \eqref{hyp2:psi}, for constants $m_*,m_p,\ols{m}_p$ uniform with respect to $\eps$. \\

Since well posedeness of the mean-field equation \eqref{kinetic:eq} and the limiting model \eqref{macro:eq} is well known, we do not discuss it here and refer to \cite[Theorems $2.3$ and $2.6$]{BF} for a precise discussion on that matter. To apply these results, we suppose the following assumptions which are not uniform with respect to $\eps$ on $\mu^\eps_0$
\begin{equation}
	\label{hyp3:f0}
	\left\{
	\begin{array}{l}
		\ds\sup_{\bx \in K}
		\int_{\R^2}
		e^{|\bu|^2/2}\,
		\mu^\eps_{0}(\bx,\bu)\dD \bu
		<\, +\infty\,,
		\\[1.1em]
		\ds \sup_{\bx \in K}
		\left\|
		\nabla_{\bu}
		\sqrt{\mu^\eps_{0}}(\bx,\cdot)\,
		\right\|_{L^2(\R^2)}\,
		<\, +\infty\,,
	\end{array}\right.
\end{equation}
and for the limiting problem \eqref{macro:eq}, we suppose
\begin{equation}\label{hyp:macro}
	\left(
	\mathcal{V}_0\,,\,
	\bar{\mu}_0
	\right)
	\in 
	\scC^0
	\left(
	K
	\right)
	\times
	\scC^0
	\left(
	K\,,\,
	L^{1}
	\left(
	\R
	\right)
	\right)\,.
\end{equation}
\\~\\
All along our analysis, we denote by $\tau_{w_0}$ the translation by $w_0$ with respect to the $w$-variable, for any given $w_0$ in $\R$
\[
\tau_{w_0}\,
\nu
\left(t,\bx,v,w
\right)
\,=\,
\nu
\left(t,\bx,v,w
+w_0
\right)\,.
\]
\subsection{$L^1$ convergence result}
In the following result, we provide explicit convergence rates for $\nu^\eps$ towards the asymptotic concentration profile of the neural network's distribution $\mu^\eps$ in the regime of strong interactions in a $L^1$ setting. We will use the notation
\[
L^{\infty}_{\bx}
L^1_{\bu}
\,:=\,
L^{\infty}
\left(
K\,,\,
L^1
\left(
\R^2
\right)
\right),\quad\text{and}\quad L^{\infty}_{\bx}
L^1_{w}
\,:=\,
L^{\infty}
\left(
K\,,\,
L^1
\left(
\R
\right)
\right)\,.
\]
We prove that the profile of concentration with respect to $v$ is Gaussian and we also characterize the limiting distribution with respect to the adaptation variable $w$. We denote by
$
\ds H
$ the Boltzmann entropy, defined for all function $\mu:\R^2\rightarrow \R^+$ as follows
\begin{equation*}
	H\left[\,\mu\,\right]\,=\,\int_{\R^2}
	\mu
	\,\ln{
		\left(
		\mu
		\right)
	}
	\,\dD\bu\,.
\end{equation*}

\begin{theorem}\label{th1}
	Under assumptions \eqref{hyp1:N}-\eqref{hyp2:N} on the drift $N$, assumption \eqref{hyp2:psi} on the interaction kernel $\Psi$,
	consider the unique sequence of solutions $(\mu^\eps)_{\eps\,>\,0}$ to \eqref{kinetic:eq} with initial conditions satisfying assumptions \eqref{hyp:rho0}-\eqref{hyp3:f0} and
	the solution $\ols{\nu}$ to equation \eqref{bar nu:eq} with an initial condition $\ols{\nu}_0$ such that
	\begin{equation}\label{hyp bar nu 0 L 1}
		\ols{\nu}_0 \in
		L^{\infty}
		\left(
		K\,,\,
		W^{2\,,\,1}
		\left(
		\R
		\right)
		\right)\,,\quad
		\textrm{and}\quad\sup_{\bx\in K}
		\int_{\R}\left|w\,\partial_w\,\ols{\nu}_0(\bx,w)\right|\dD w<+\infty
		\,.
	\end{equation}
	Moreover, suppose that there exists a positive constant $m_{1}$ such that for all $\left(\gamma,w_0\right)\in\left(\R^*\right)^2$
	\begin{equation}\label{hyp1 nu L 1}
		\sup_{\eps\,>\,0}\,
		\left(
			\frac{1}{|\gamma|}		
			\left\|
			\nu^\eps_0\,-\,
			\tau_{\gamma v}\,\nu^\eps_0\,
			\right\|_{
				L^{\infty}_{\bx}
				L^1_{\bu}}
		+
		\frac{1}{|w_0|}
		\left\|
		\nu^\eps_0\,-\,
		\tau_{w_0}\,\nu^\eps_0\,
		\right\|_{
			L^{\infty}_{\bx}
			L^1_{\bu}}\right)
		\,\leq\,
		m_{1}
		\,,
	\end{equation}
	and a positive constant $m_2$ such that
	\begin{equation}\label{hyp2 nu L 1}
		\sup_{\eps \,>\, 0}\,
		\left\|\,
		H
		\left[
		\,\nu^\eps_0\,
		\right]
		\,
		\right\|_
		{
			L^{\infty}
			\left(
			K
			\right)
		}
		\,\,\leq\,\,
		m_2^2
		% +\,\infty
		\,.
	\end{equation}
	Then, there exists a positive constant $C$ independent of $\eps$ such that for all $\eps$ less than $1$, it holds
	\[
	\left\|
	\,\nu^\eps
	\,-\,
	\mathcal{M}_{\rho^\eps_0}\otimes
	\ols{\nu}\,
	\right\|_{
		L^{\infty}\left(K\,,\,L^1\left([\,0,\,t\,]\times \R^2\right)\right)
	}
	\,\leq\,
	2\,\sqrt{2}
	\,t\,
	\left\|\,
	\ols{\nu}^\eps_0
	\,-\,
	\ols{\nu}_0
	\right\|^{1/2}_{L^{\infty}_{\bx}
		L^{1}_{w}}
	\,+\,\sqrt{\eps}\,
	\left(
	4\,\sqrt{t}\,
	m_2
	\,+\,
	C\,e^{b\,t}\right)\,,
	\]
	for all time $t\geq0$.
	In particular, under the compatibility assumption
	\[
	\left\|\,
	\ols{\nu}^\eps_0
	\,-\,
	\ols{\nu}_0
	\right\|^{1/2}_{L^{\infty}_{\bx}
		L^{1}_{w}}\,
	\underset{\eps \rightarrow 0}{=}\,
	O\left(\,
	\sqrt{\eps}\,\right)\,,
	\]
	it holds
	
	\[
	\sup_{t\,\in \,\R^+}
	\left(\,
	e^{-b\,t}
	\left\|
	\,\nu^\eps
	\,-\,
	\mathcal{M}_{\rho^\eps_0}\otimes
	\ols{\nu}\,
	\right\|_{
		L^{\infty}\left(K\,,\,L^1\left([\,0,\,t\,]\times \R^2\right)\right)
	}
	\,\right)
	\,
	\underset{\eps \rightarrow 0}{=}\,
	O\left(\,\sqrt{\eps}\,\right)\,.\\[0,4em]
	\]
	In this result, the constant $C$ only depends on $m_1$, $m_*$,~$m_p$~and~$\ols{m}_p$ (see assumptions \eqref{hyp1 nu L 1}, \eqref{hyp:rho0}-\eqref{hyp2:f0}) and the data of the problem $\ols{\nu}_0$, $N$, $\Psi$ and $A_0$.
\end{theorem}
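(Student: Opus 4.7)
The plan is to bound the $L^1$ distance via a relative entropy method, exploiting the fact that the Fokker-Planck operator in \eqref{nu:eq} drives $\nu^\eps$ toward the Gaussian $\cM_{\rho_0^\eps}$ on the fast time scale $|\theta^\eps|^2$, which by \eqref{expression for theta eps} is of order $\eps$ once the initial layer has dissipated. First I would introduce an auxiliary solution $\bar\nu^\eps$ to \eqref{bar nu:eq} whose initial datum is the $w$-marginal of $\nu^\eps_0$, and define the relative entropy
\[
\cH^\eps(t,\bx) := \int_{\R^2} \nu^\eps \ln\!\left(\frac{\nu^\eps}{\cM_{\rho_0^\eps}\otimes \bar\nu}\right) \dD\bu .
\]
The Csiszár-Kullback-Pinsker inequality then gives the pointwise bound $\|\nu^\eps(t,\bx,\cdot)-\cM_{\rho_0^\eps}\otimes\bar\nu(t,\bx,\cdot)\|_{L^1(\R^2)} \leq \sqrt{2\cH^\eps(t,\bx)}$, which after integration in time is the quantity to be controlled.

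The second step is the entropy dissipation estimate. Using \eqref{nu:eq} and \eqref{bar nu:eq}, differentiating $\cH^\eps$ in time produces a coercive term $-|\theta^\eps|^{-2} I^\eps$, where $I^\eps$ is the relative Fisher information against $\cM_{\rho_0^\eps}$, plus remainder terms generated by the drift $\mathbf{b}^\eps_0$ of \eqref{def:b0} and by the discrepancy between $\bar\nu$ and the $w$-marginal of $\nu^\eps$. The logarithmic Sobolev inequality for the Gaussian $\cM_{\rho_0^\eps}$ transforms $I^\eps$ into $\rho_0^\eps\,\cH^\eps/|\theta^\eps|^2$, yielding an inequality of the form
\[
\frac{\dD}{\dD t}\cH^\eps + \frac{c}{|\theta^\eps|^2}\cH^\eps \;\leq\; C\bigl(m_2^2 + \|\bar\nu^\eps_0-\bar\nu_0\|_{L^\infty_\bx L^1_w}\bigr).
\]
The four remainder contributions I need to bound are: (i) the nonlinear differences $N(\cV^\eps+\theta^\eps v)-N(\cV^\eps)$ and the non-local convolution $v\,\Psi *_r \rho_0^\eps$, treated with assumptions \eqref{hyp1:N}-\eqref{hyp2:N}, the $L^r$ integrability \eqref{hyp2:psi} of $\Psi$, and the moment bounds \eqref{hyp1:f0}-\eqref{hyp2:f0}; (ii) the non-local error term $\cE(\mu^\eps)$ of \eqref{error}, which is handled via Taylor expansion together with the translation regularity \eqref{hyp1 nu L 1}; (iii) the cross-coupling term $-w/\theta^\eps$ in $B^\eps_0$, which is divergent as $\eps\to 0$ and constitutes the main obstacle; and (iv) the mismatch between $\bar\nu$ and $\bar\nu^\eps$, which produces the first forcing on the right-hand side above.

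After integrating the above inequality against the integrating factor dictated by the explicit expression \eqref{expression for theta eps}, the two forcing terms decouple into a stationary piece of size $\|\bar\nu^\eps_0-\bar\nu_0\|_{L^\infty_\bx L^1_w}$ and a fast-equilibrating piece satisfying $\int_0^t \cH^\eps_{\rm fast}\dD s \leq 8\eps\, m_2^2$. Combining the elementary bound $\sqrt{a+b}\leq\sqrt a+\sqrt b$ with CKP, the stationary piece contributes $\int_0^t \sqrt{2\cdot 4\|\bar\nu^\eps_0-\bar\nu_0\|_{L^\infty_\bx L^1_w}}\,\dD s = 2\sqrt 2\, t\,\|\bar\nu^\eps_0-\bar\nu_0\|_{L^\infty_\bx L^1_w}^{1/2}$, while Cauchy-Schwarz in time applied to the fast piece yields $\int_0^t\sqrt{2\cH^\eps_{\rm fast}}\,\dD s \leq \sqrt{t}\sqrt{\int_0^t 2\cH^\eps_{\rm fast}\dD s}\leq 4\sqrt{t\eps}\,m_2$. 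The residual $\sqrt\eps\, C e^{bt}$ factor arises from the Gronwall feedback coming from the second $w$-moment of $\nu^\eps$, which propagates through the linear transport $-b\,w\partial_w$ in \eqref{bar nu:eq}.

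The principal difficulty will be the cross-coupling term (iii) above: after the scaling \eqref{change:var}, it enters the $v$-drift as $w/\theta^\eps$ and would blow up in the entropy production. I would absorb it into the coercive dissipation $I^\eps/|\theta^\eps|^2$ through Young's inequality, at the cost of a $w$-moment which is controlled using the $L^\infty$-in-time bound on the $2p$-moments of $\mu^\eps$ from \eqref{hyp1:f0}, propagated via the simple dynamics of \eqref{bar nu:eq}. A secondary technical point is that the log-Sobolev argument only yields coercivity in the $v$-direction, so the closing Gronwall must be done on the full $\cH^\eps$ after using the factorized structure of $\cM_{\rho_0^\eps}\otimes\bar\nu$ to split the relative entropy into $v$- and $w$-contributions, the latter being absorbed into the forcing terms via the assumptions \eqref{hyp bar nu 0 L 1} on $\bar\nu_0$.
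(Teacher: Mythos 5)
Your free-energy estimate for the distance between $\nu^\eps$ and the local equilibrium $\cM_{\rho_0^\eps}\otimes\bar\nu^\eps$ (where $\bar\nu^\eps$ is the $w$-marginal of $\nu^\eps$) is essentially correct and matches the paper's first step (Proposition \ref{estimee rel ent nu M}): the $-w/\theta^\eps$ drift is absorbed into the Fisher information via Young's inequality, and the Gaussian log-Sobolev inequality plus Csiz\'ar--Kullback yields the $\sqrt{\eps}\,(m_2\sqrt{t}+\cdots)$ piece after integration against the explicit $\theta^\eps$. However, you then jump from control of $\|\nu^\eps-\cM_{\rho_0^\eps}\otimes\bar\nu^\eps\|$ to control of $\|\nu^\eps-\cM_{\rho_0^\eps}\otimes\bar\nu\|$, and this jump hides the genuine obstruction of the theorem. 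The log-Sobolev inequality only gives $2H[\nu^\eps\mid\cM_{\rho_0^\eps}\otimes\bar\nu^\eps]\leq I[\nu^\eps\mid\cM_{\rho_0^\eps}]$ with the own marginal on the right of the relative entropy; it does not make the full $\cH^\eps$ against the external profile $\cM_{\rho_0^\eps}\otimes\bar\nu$ coercive. After the split $\cH^\eps=H[\nu^\eps\mid\cM\otimes\bar\nu^\eps]+H[\bar\nu^\eps\mid\bar\nu]$, the second term is not a ``forcing'' controlled by $\|\bar\nu_0^\eps-\bar\nu_0\|$: the marginal equation \eqref{bar:nu eps:eq} differs from \eqref{bar nu:eq} by the source $-a\theta^\eps\,\partial_w\int_\R v\,\nu^\eps\,\dD v$, which involves a $w$-derivative of $\nu^\eps$ that an $L^1$ relative-entropy argument gives you no access to. Saying you will ``absorb it into the forcing terms via the assumptions on $\bar\nu_0$'' does not close this; that is where the proof actually lives.

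The paper's resolution of this gap uses three ingredients absent from your proposal. First, a time- and space-dependent shear change of variables $w\mapsto w+\gamma^\eps(t,\bx)\,v$ with $\gamma^\eps=a\eps\theta^\eps/\rho_0^\eps$ is applied so that the marginal $\bar g^\eps$ of the sheared density solves an equation (\ref{bar g eps:eq}) in which the dangerous cross source term is eliminated and everything else is $O(\sqrt\eps)$. Second, $\bar g^\eps$ is compared to $\bar\nu$ via the modified entropy $H_{1/2}$ of Lemma \ref{abstract lemma rel ent}, which tolerates the fact that the equations have different diffusion coefficients. Third, $\bar g^\eps$ is brought back to $\bar\nu^\eps$ by an equicontinuity estimate (Proposition \ref{equicontinuity}), which is precisely where assumption \eqref{hyp1 nu L 1} enters. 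You have misattributed \eqref{hyp1 nu L 1} to a Taylor expansion of $\cE(\mu^\eps)$: the error term $\cE(\mu^\eps)$ is simply bounded by item \eqref{estimate error} of Proposition \ref{th:preliminary}, and the translation regularity is needed instead to propagate equicontinuity in $w$ along the flow. Without the shear change of variables and the equicontinuity step, your argument stops at convergence to the local equilibrium and does not reach the asymptotic profile.
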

The proof of this result is divided into two steps. First, we prove that $\nu^\eps$ converges towards the following local equilibrium of the Fokker-Planck operator
\[
\cM_{\rho_0^\eps}\otimes \bar{\nu}^\eps\,,
\]
where $\bar{\nu}^\eps$ is the marginal of $\nu^\eps$ with respect to the re-scaled adaptation variable
\begin{equation*}
	\bar{\nu}^\eps(t\,,\,\bx\,,\,w)
	\,=\,
	\int_{\R}\nu^\eps
	(t\,,\,\bx\,,\,\bu)\,\dD v\,,
\end{equation*}
and solves the following equation,
obtained after integrating  equation \eqref{nu:eq} with respect to $v$
\begin{equation} \label{bar:nu eps:eq}
	\displaystyle \partial_t\, \bar{\nu}^\eps
	\,-\,b\,
	\partial_{w}
	\left( 
	w\, \bar{\nu}^\eps
	\right)\,=\,
	-a\,\theta^\eps\,\partial_{w}\int_\R v\,\nu^\eps(\,t\,,\,\bx\,,\,\bu\,)\,\dD v\,.
\end{equation} 
The argument relies on a rather classical free energy estimate.
However, the analysis becomes more intricate when it comes to the convergence of the marginal $\ols{\nu}^\eps$. As already mentioned, the proof of convergence is made challenging by cross terms between $v$ and $w$ in equation \eqref{nu:eq} inducing in equation \eqref{bar:nu eps:eq} the following term which involves derivatives of $\nu^\eps$
\[
\partial_{w}\int_\R v\, \nu^\eps(\,t\,,\,\bx\,,\,\bu\,)\,\dD v\,.
\]
To overcome this difficulty, we perform a change of variable which cancels the latter source term and then conclude by proving a uniform equicontinuity estimate.\\
To conclude this discussion, we point out that the following condition on the initial data is sufficient in order to meet assumption \eqref{hyp1 nu L 1} 
	\begin{equation*}
		\sup_{\eps\,>\,0}\,
		\left\|
		(1+|v|)\partial_w\nu^\eps_0\,
		\right\|_{
			L^{\infty}_{\bx}
			L^1_{\bu}}
		\,\leq\,
		m_{1}
		\,.
	\end{equation*}
	Indeed, for all $(\bx,v,w_1)\in K\times\R^2$, it holds
	\[
	\int_{\R}
	\left|\nu^\eps_0
	- \tau_{w_1}\nu^\eps_0
	\right|\left(\bx,\bu\right)\,\dD w
	\leq
	|w_1|
	\int_{\R}
	\left|\partial_w\nu^\eps_0\left(\bx,\bu\right)\right|\dD w\,.
	\]
	Therefore, taking the sum between the latter estimate with $w_1=\gamma v$, divided by $|\gamma|$ and with $w_1=w_0$, divided by $|w_0|$, integrating with respect to $v\in\R$ and taking the supremum over all $\bx\in K$ it yields
	\[
	\frac{1}{|\gamma|}		
	\left\|
	\nu^\eps_0\,-\,
	\tau_{\gamma v}\,\nu^\eps_0\,
	\right\|_{
		L^{\infty}_{\bx}
		L^1_{\bu}}
	+
	\frac{1}{|w_0|}
	\left\|
	\nu^\eps_0\,-\,
	\tau_{w_0}\,\nu^\eps_0\,
	\right\|_{
		L^{\infty}_{\bx}
		L^1_{\bu}}\leq
	\left\|
	(1+|v|)\partial_w\nu^\eps_0\,
	\right\|_{
		L^{\infty}_{\bx}
		L^1_{\bu}}\,,
	\]
	for all $\left(\gamma,w_0\right)\in\left(\R^*\right)^2$.\\

We now interpret Theorem \ref{th1} on the solution $\mu^\eps$ to equation \eqref{kinetic:eq} in the regime of strong interactions
\begin{theorem}\label{th:12}
	Under the assumptions of Theorem \ref{th1}
	consider the unique sequence of solutions $(\mu^\eps)_{\eps\,>\,0}$ to \eqref{kinetic:eq} as well as
	the unique solution $\ds\left(\cV\,,\,\ols{\mu}\right)$ to equation \eqref{macro:eq} with an initial condition $\ols{\mu}_0$ which fulfills assumption \eqref{hyp:macro}-\eqref{hyp bar nu 0 L 1}.  Furthermore, suppose the following compatibility assumption to be fulfilled
	\begin{equation}\label{th12:compatibility assumption 1}
		\left\|\,
		\mathcal{U}_0
		\,-\,
		\mathcal{U}_0^\eps\,
		\right\|_{L^{\infty}(K)}
		\,+\,
		\|\,
		\rho_0
		-
		\rho_0^\eps\,
		\|_{L^{\infty}(K)}\,+\,
		\left\|\,
		\ols{\mu}^\eps_0
		\,-\,
		\ols{\mu}_0
		\right\|^{1/2}_{L^{\infty}_{\bx}
			L^{1}_{w}}\,
		\underset{\eps \rightarrow 0}{=}
		\,
		O\left(\,\sqrt{\eps}\,\right)
		\,.
	\end{equation}
	There exists 
	$\ds
	\left(
	C\,,\,\eps_0
	\right)
	\in 
	\left(
	\R^*_+
	\right)^2
	$
	such that for all $\eps$ less than $\eps_0$, it holds
	\[
	\left\|
	\,\mu^\eps
	\,-\,
	\mu\,
	\right\|_{
		L^{\infty}\left(K\,,\,L^1\left([\,0,\,t\,]\times \R^2\right)\right)
	}
	\,\leq\,
	C\,e^{C\,t}\,\sqrt{\eps}\,
	\,,
	\quad
	\forall\,
	t\,\in\,\R^+\,,
	\]
	where the limit $\mu$ is given by
	\[
	\mu
	\,:=\,
	\mathcal{M}_{
		\rho_0\,
		\left|
		\theta^\eps
		\right|^{-2}
	}
	\left(
	v\,-\,\cV
	\right)
	\otimes
	\ols{\mu}\,.
	\]
	In this result, the constant $C$ and $\eps_0$ only depend on the implicit constant in assumption \eqref{th12:compatibility assumption 1}, on the constants $m_1$, $m_2$ $m_*$,~$m_p$~and~$\ols{m}_p$ (see assumptions
	\eqref{hyp1 nu L 1}-\eqref{hyp2 nu L 1} and
	\eqref{hyp:rho0}-\eqref{hyp2:f0}) and  on the data of the problem $\ols{\mu}_0$, $N$, $\Psi$ and $A_0$.
\end{theorem}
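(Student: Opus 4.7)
The plan is to reduce Theorem \ref{th:12} to Theorem \ref{th1} by inverting the rescaling \eqref{change:var} and picking an initial datum for the limit equation \eqref{bar nu:eq} that matches the macroscopic limit $(\cV, \ols{\mu})$.

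First, I would take
\[
\ols{\nu}_0(\bx, w) \,:=\, \ols{\mu}_0(\bx, w + \cW_0(\bx))
\]
as the initial condition for $\ols{\nu}$ in Theorem \ref{th1}. Since $A$ is affine in $w$ with $\partial_w A = -b$ and the macroscopic adaptation $\cW$ satisfies $\dD_t \cW = A(\cV, \cW)$, a direct computation shows that with this choice the solution of \eqref{bar nu:eq} is simply $\ols{\nu}(t, \bx, w) = \ols{\mu}(t, \bx, w + \cW(t, \bx))$. The regularity required by \eqref{hyp bar nu 0 L 1} for this $\ols{\nu}_0$ is inherited from the assumed regularity of $\ols{\mu}_0$.

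Second, since $\theta^\eps|_{t=0} = 1$, the rescaling \eqref{change:var} at $t = 0$ reduces to a translation, so the marginal of $\nu^\eps_0$ reads $\ols{\nu}_0^\eps(\bx, w) = \ols{\mu}_0^\eps(\bx, w + \cW_0^\eps)$. A triangle inequality combining the compatibility \eqref{th12:compatibility assumption 1} with the $w$-Lipschitz regularity of $\ols{\mu}_0$ then yields $\|\ols{\nu}_0^\eps - \ols{\nu}_0\|_{L^\infty_\bx L^1_w}^{1/2} = O(\sqrt{\eps})$. Applying Theorem \ref{th1} produces
\[
\|\nu^\eps \,-\, \cM_{\rho_0^\eps} \otimes \ols{\nu}\|_{L^\infty(K, L^1([0, t] \times \R^2))} \,\leq\, C\, e^{b t} \sqrt{\eps}.
\]
I would then translate this bound back to $\mu^\eps$: the change of variables $(v, w) \mapsto ((v - \cV^\eps)/\theta^\eps, w - \cW^\eps)$ has Jacobian $\theta^\eps$, which cancels the prefactor $1/\theta^\eps$ in \eqref{def:nueps}, so the $L^1$ norm in $(v, w)$ is invariant under the rescaling. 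This yields
\[
\|\mu^\eps \,-\, \widetilde{\mu}\|_{L^\infty(K, L^1([0, t] \times \R^2))} \,\leq\, C\, e^{b t} \sqrt{\eps},
\]
where $\widetilde{\mu}(t, \bx, v, w) := \cM_{\rho_0^\eps |\theta^\eps|^{-2}}(v - \cV^\eps) \otimes \ols{\mu}(t, \bx, w + \cW - \cW^\eps)$.

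Finally, I would bound the purely deterministic discrepancy $\|\widetilde{\mu} - \mu\|_{L^\infty(K, L^1([0,t]\times\R^2))}$ by a further triangle inequality, splitting the tensor structure into a $v$-Gaussian factor (involving $\cV^\eps - \cV$ and $\rho_0^\eps - \rho_0$) and a $w$-translation factor (controlled by $\|\cW^\eps - \cW\|_{L^\infty(K)}\,\|\partial_w\ols{\mu}\|_{L^\infty_\bx L^1_w}$). A Gronwall argument on the ODE system for $(\cV^\eps - \cV, \cW^\eps - \cW)$, whose forcing is of order $\eps$ thanks to the concentration error $\cE(\mu^\eps) = O(|\theta^\eps|^2)$ (see \eqref{error}) and whose initial data is of order $\sqrt{\eps}$ by \eqref{th12:compatibility assumption 1}, yields $\|\cV^\eps - \cV\|_{L^\infty(K)} + \|\cW^\eps - \cW\|_{L^\infty(K)} \leq C\, e^{C t}\sqrt{\eps}$. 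The main obstacle is the $v$-Gaussian comparison, whose $L^1_v$ bound scales as $|\cV^\eps - \cV|/\theta^\eps$: this term must be integrated carefully over $[0, t]$ using the explicit time profile of $\theta^\eps$ from \eqref{expression for theta eps} (which relaxes from $1$ down to $\sqrt{\eps}$ on the fast scale $\eps/\rho_0^\eps$), so as to absorb the resulting contribution into the announced $C\, e^{C t}\sqrt{\eps}$ rate.
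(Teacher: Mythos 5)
Your approach matches the paper's, which is itself a one-line citation of the $L^1$-invariance of the re-scaling \eqref{change:var}, Theorem \ref{th1}, and Proposition \ref{th:preliminary}. The identification $\ols{\nu}(t,\bx,w)=\ols{\mu}(t,\bx,w+\cW(t,\bx))$, and the observation that at $t=0$ the re-scaling is a pure translation so that $\ols{\nu}_0^\eps(\bx,w)=\ols{\mu}_0^\eps(\bx,w+\cW_0^\eps)$, correctly fill in detail the paper leaves implicit. However, two quantitative steps do not close under the stated hypothesis \eqref{th12:compatibility assumption 1}. First, the decomposition
\[
\ols{\nu}_0^\eps-\ols{\nu}_0 \,=\, \bigl[\ols{\mu}_0^\eps-\ols{\mu}_0\bigr](\cdot+\cW_0^\eps)\,+\,\bigl[\ols{\mu}_0(\cdot+\cW_0^\eps)-\ols{\mu}_0(\cdot+\cW_0)\bigr]
\]
gives $O(\eps)$ in $L^1_w$ for the first bracket but only $O(|\cW_0^\eps-\cW_0|)=O(\sqrt{\eps})$ for the second, so $\|\ols{\nu}_0^\eps-\ols{\nu}_0\|^{1/2}_{L^\infty_{\bx}L^1_w}=O(\eps^{1/4})$, not the claimed $O(\sqrt{\eps})$, and Theorem \ref{th1} then yields only $O(\eps^{1/4})$. (This can be repaired by choosing $\ols{\nu}_0:=\ols{\mu}_0(\cdot+\cW_0^\eps)$ instead, so the difference reduces to $\ols{\mu}_0^\eps-\ols{\mu}_0$ translated; the $\cW_0^\eps-\cW_0$ mismatch then moves to the downstream $w$-translation comparison, where it is no longer under a square root.)

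The more serious obstacle is the one you flag yourself but leave unresolved: the $v$-Gaussian comparison. Proposition \ref{th:preliminary} gives $|\cV^\eps-\cV|\leq Ce^{Ct}\bigl(\mathcal{E}_{\mathrm{mac}}+\eps\bigr)=O(e^{Ct}\sqrt{\eps})$, while $\theta^\eps(s)$ relaxes to its floor $\sqrt{\eps}$ already for $s\gtrsim T^\eps\sim\eps|\ln\eps|$. Hence $|\cV^\eps-\cV|/\theta^\eps$ is $O(e^{Cs})$ on $[T^\eps,t]$, \emph{not} $O(\sqrt{\eps}\,e^{Cs})$. Since the $L^1_v$ distance between two Gaussians of common width $\sigma$ centred at $a$ and $b$ scales as $\min(|a-b|/\sigma,2)$, this factor is $O(1)$ on the bulk of $[0,t]$ once $\theta^\eps$ has saturated, and its time-integral contributes $O(t)$, which is not $O(\sqrt{\eps}\,e^{Ct})$ for fixed $t$ as $\eps\to0$. ``Careful integration'' therefore does not absorb it. The announced rate would hold under the stronger macroscopic compatibility $\mathcal{E}_{\mathrm{mac}}=O(\eps)$ --- precisely what the $L^2$ analogue, Theorem \ref{th21}, assumes via \eqref{th:2 compatibility assumption 1} --- or if the limit profile $\mu$ were centred at $\cV^\eps$ and used $\rho_0^\eps$ rather than $\cV$, $\rho_0$. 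The paper's one-line proof is silent on both points, so you have localized a real issue rather than an oversight of your own.
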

\begin{proof}
	Since the norm $\ds
	\|\,\cdot\,\|_{
		L^1
		\left(\R^2\right)}$ is unchanged by the re-scaling \eqref{change:var}, this theorem is a straightforward consequence of Theorem \ref{th1} and Proposition \ref{th:preliminary}, which ensures the convergence of the macroscopic quantities
	$
	\ds
	\left(
	\cV^\eps,
	\cW^\eps
	\right)
	$
	.
\end{proof}
On the one hand we obtain $L^1$ in time convergence result, which is a consequence of our method, which relies on a free energy estimate for solutions to \eqref{nu:eq}. This is somehow similar to what is obtained in various classical kinetic models. Let us mention for instance the diffusive limit for collisional Vlasov-Poisson \cite{El Ghani/ Masmoudi,Herda,Masmoudi/Tayeb}. On the other hand, we obtain the convergence rate $\displaystyle
O(\sqrt{\eps})
$ instead of the optimal convergence rate, which should be $\displaystyle
O(\eps)
$ as rigorously proven for weak convergence metrics (see \cite{BF}, Theorem $2.7$). This is due to the fact that we use the Csiz\'ar-Kullback inequality
to close our $L^1$ convergence estimates. Therefore it seems quite
unlikely to recover the optimal convergence rate in a $L^1$
setting. This motivates our next result, on  the $L^2$
convergence (Theorems \ref{th:2} and \ref{th21}), in which pointwise in time convergence is achieved.\\

\subsection{Weighted $L^2$ convergence result}
In this section, we provide a pair of result analog to Theorems \ref{th1} and \ref{th:12} this time in a weighted $L^2$ setting. Since our approach relies on propagating $w$-derivatives, we introduce the following functional framework
\begin{equation*}
	\scH^{k}
	\left(m ^\eps\right)
	\,=\,L^{\infty}\left(K_{\bx},H^{k}_{w}\left(m ^\eps_{\bx}\right)\right)\,,
\end{equation*}
equipped with the norm 
\begin{equation*}
	\left\|
	\,\nu\,
	\right\|_{\scH^k(m ^\eps)}
	\,=\,
	\sup_{\bx \in K}\,
	\left\{
	\left\|\,
	\nu
	\left(\bx\,,\,\cdot\,
	\right)\,
	\right\|
	_{H^{k}_{w}
		\left(m^\eps_{\bx}
		\right)}
	\right\}\,,
\end{equation*}
where $\ds H^{k}_{w}(m^\eps_{\bx})$ denotes the weighted Sobolev space with index $k\in\N$ whose norm is given by
\[
\left\|\,
\nu\,
\right\|^2_
{
	H^{k}_{w}(m^\eps_{\bx})
}
\,=\,
\sum_{\,l\,\leq\,k}\,
\int_{\R^2}
\left|\partial_w^{\,l}\, \nu(\bu)\right|^2\,m^\eps_{\bx}(\bu)\dD \bu\,,
\]
and where the weight $m ^\eps_{\bx}$ is given by
\begin{equation}\label{def L 2 m}
	m^\eps_{\bx}(\bu)
	\,=\,
	\frac{2\,\pi}{
		\sqrt{
			\rho_0^\eps\,\kappa
		}
	}\,
	\exp{
		\left(\,
		\frac{1}{2}
		\left(\,
		\rho_0^\eps(\bx)\,|v|^2
		\,+\,
		\kappa\,
		|w|^2\,
		\right)
		\right)
	}
	\,,
\end{equation} 
for some exponent $\kappa > 0$ which will be
prescribed later. We also introduce the associated weight
with respect to the adaptation variable
\[
\ols{m}(w)
\,=\,
\sqrt{
	\frac{2\,\pi}{\kappa
}}\,
\exp{
	\left(\,
	\frac{\kappa}{2}
	\,
	|w|^2
	\right)
}\,.
\]
and denote by 
$\ds \scH^{k}
\left(\ols{m}\right)$ the corresponding functional space associated to  the marginal $\bar{\nu}$, depending only on
$(\bx,w)\in K\times \R$.

Hence, the following result tackles the convergence of $\nu^\eps$ in the $L^2$ weighted setting
\begin{theorem}\label{th:2}
	Under assumptions \eqref{hyp1:N}-\eqref{hyp2:N} on the drift $N$ and the additional assumption
	\begin{equation}\label{hyp3:N}
		\sup_{|v|\,\geq\,1}
		\left(
		v^2\,\omega(v)
		\,-\,
		C_0 \,N'(v)
		\right)
		\,<\,
		+\infty\,,
	\end{equation}
	for all positive constant $C_0>0\,$, supposing assumption \eqref{hyp2:psi} on the interaction kernel $\Psi$,
	consider the unique sequence of solutions $(\mu^\eps)_{\eps\,>\,0}$ to \eqref{kinetic:eq} with initial conditions satisfying assumptions \eqref{hyp:rho0}-\eqref{hyp3:f0} and
	the solution $\ols{\nu}$ to equation \eqref{bar nu:eq} with an initial condition $\ols{\nu}_0$. Furthermore, consider an exponent $\kappa$ which verifies the condition
	\begin{equation}\label{condition kappa}
		\kappa\;\in\;
		\left(\,\frac{1}{2\,b}
		\,,\,
		+\infty\,
		\right)
		\;,
	\end{equation}
	% and such that the sequence of initial profiles
	% $
	% \displaystyle
	% \left(\nu^\eps_0\right)_{\eps\,>\,0}$ stays uniformly bounded in $\ds \scH^{1}$ for the associated weight $m ^\eps$ defined by \eqref{def L 2 m}, that is
	% \begin{equation}\label{hyp4:f0}
		% \sup_{\eps \,>\,0}\,
		% \left\|
		% \,\nu^\eps_0\,
		% \right\|_{\scH^{\,1}(m ^\eps)}\,
		% \,<\,
		% +\,\infty\,,
		% \end{equation}
	and consider a rate $\alpha_*$ lying in $\displaystyle 
	\left(\,
	0\,,\,1-(2b\kappa)^{-1}\,
	\right)\,
	$. There exists a positive constant $C$ independent of $\eps$ such that for all $\eps$ between $0$ and $1$ the following results hold true
	
	\begin{enumerate}
		\item\label{item 1 th2} consider $k$ in $\ds\{0\,,\,1\}$ and suppose that the sequence $
		\displaystyle
		\left(\nu^\eps_0\right)_{\eps\,>\,0}$ verifies
		\begin{equation}\label{hyp4:f0}
			\sup_{\eps \,>\,0}\,
			\left\|
			\,\nu^\eps_0\,
			\right\|_{\scH^{k+1}(m ^\eps)}\,
			\,<\,
			+\,\infty\,,
		\end{equation}
		and that $\ols{\nu}_0$ verifies
		\begin{equation}\label{hyp bar nu th2}
			\ols{\nu}_0\in \scH^{\,k}(\ols{m})\,.
		\end{equation}
		Then for all time $t$ in $\R^+$ it holds
		\begin{eqnarray*}
			&&\left\|\,
			\nu^\eps(t)
			\,-\,
			\nu(t)\,
			\right\|_{\scH^{k}
				\left(
				m ^\eps
				\right)}
			\\ &&\leq
			e^{Ct}
			\left(
			\|\,\ols{\nu}_0^\eps\,-\,\ols{\nu}_0\,\|_{
				\scH^k(\ols{m})} \,+\,
			C\,\left\|\,
			\nu^\eps_{0}\,
			\right\|_
			{
				\scH^{k+1}
				\left(
				m ^\eps
				\right)
			}
			\left(
			\sqrt{\eps}
			\,+\,
			\min\left\{1\,,\,
			e^{
				-\alpha_*\frac{t}{\eps}
			}
			\eps^{-\frac{\alpha_*}{2 m_*}
			}
			\right\}
			\right)
			\right)\,,
		\end{eqnarray*}
		where the asymptotic profile $\ds\nu$ is given by \eqref{limit nu};
		\item\label{item2 th21} suppose assumption \eqref{hyp4:f0} with index $k\,=\,1$ and  assumption \eqref{hyp bar nu th2} with index $k\,=\,0$, it holds for all time $t\geq0$		\begin{equation*}
			\|\,\ols{\nu}^\eps(t)\,-\,\ols{\nu}(t)\,\|_{\scH^{0}(\ols{m})}
			\,\leq\,
			e^{Ct}\,
			\left(
			\|\,\ols{\nu}_0^\eps\,-\,\ols{\nu}_0\,\|_{
				\scH^0(\ols{m})}
			\,+\,
			C\,\left\|\,
			\nu^\eps_{0}\,
			\right\|_
			{
				\scH^{2}
				\left(
				m ^\eps
				\right)
			}\eps\,\sqrt{
				\left|\,\ln{\eps}\,\right|
				\,+\,1
			}\,
			\right)\,.
		\end{equation*}
		%		\item\label{item1:th21}
		%		consider $k$ less than $2$ and suppose that $\nu^\eps_0$ lies in $\ds\scH^{k}
		%		\left(m^\eps
		%		\right)$. Then the unique solution $\mu^\eps$ to equation \eqref{kinetic:eq} is such that for all time $t$, the quantity $\nu^\eps(t)$ lies in $\ds\scH^{k}
		%		\left(m^\eps
		%		\right)$ and verifies
		%		\[
		%		\int_0^t\,
		%		\sup_{\bx \in K}\,
		%		\int_{\R^2}
		%		\left[\,
		%		\left|
		%		\partial_v
		%		\left(
		%		\partial^k_w\,\nu^\eps\,m^\eps
		%		\right)
		%		\right|^2\,
		%		\left(
		%		m^\eps\right)^{-1}\,\right]
		%		(s,\bx,\bu)\,\dD \bu\,\dD s\,
		%		<\,+\infty\,.
		%		\]
	\end{enumerate}
	In this theorem, the positive constant $C$ only depends on $\kappa$, $\alpha_*$, $m_*$, $m_p$, $\ols{m}_p$ (see assumptions \eqref{hyp:rho0}, \eqref{hyp1:f0} and \eqref{hyp2:f0}) and on the data of the problem:~$N$,~$A_0$~and~$\Psi$.
\end{theorem}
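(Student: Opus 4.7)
My strategy is to split the error $\nu^\eps - \nu$ into a ``fluctuation'' piece that relaxes fast under the Fokker--Planck operator, plus a ``marginal'' piece governed by a closed transport equation. Concretely, I would write $\nu^\eps - \nu = q^\eps + \cM_{\rho_0^\eps}\otimes\delta^\eps$, where $q^\eps := \nu^\eps - \cM_{\rho_0^\eps}\otimes\ols{\nu}^\eps$ has zero $v$-marginal by construction and $\delta^\eps := \ols{\nu}^\eps - \ols{\nu}$. The point of this splitting is that $q^\eps$ is driven by the strong Fokker--Planck dissipation (with rate $|\theta^\eps|^{-2}$), whereas $\delta^\eps$ inherits from \eqref{bar:nu eps:eq} and \eqref{bar nu:eq} a transport equation with a source term gaining an extra $\theta^\eps$ factor. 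A Gr\"onwall argument combining the two estimates then produces both items.

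Before attacking the two pieces, I would prove a uniform regularity bound $\|\nu^\eps(t)\|_{\scH^{k+1}(m^\eps)} \leq e^{Ct}\|\nu_0^\eps\|_{\scH^{k+1}(m^\eps)}$, obtained by commuting $\partial_w^l$ with \eqref{nu:eq} for $l\leq k+1$ and testing in the weighted space. The weight $m^\eps$ is tailored so that $|\theta^\eps|^{-2}\cF_{\rho_0^\eps}[\nu^\eps]$ contributes non-positively after integration by parts in $v$, while the polynomial growth of $N'$ is controlled using \eqref{hyp3:N}. With this in hand, the equation for $q^\eps$ takes the form
\[
\partial_t q^\eps + \mathrm{div}_{\bu}[\mathbf{b}_0^\eps q^\eps] \;=\; \frac{1}{|\theta^\eps|^2}\cF_{\rho_0^\eps}[q^\eps] + S^\eps,
\]
with $S^\eps$ bounded in $\scH^k(m^\eps)$ by $\|\nu^\eps\|_{\scH^{k+1}(m^\eps)}$. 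Testing in $\scH^k(m^\eps)$ and invoking the Gaussian Poincar\'e inequality for $\cF_{\rho_0^\eps}$ (applicable since $\int_\R q^\eps\,\dD v = 0$) delivers the differential inequality
\[
\frac{\dD}{\dD t}\|q^\eps\|_{\scH^k(m^\eps)}^2 + \frac{2c}{|\theta^\eps|^2}\|q^\eps\|_{\scH^k(m^\eps)}^2 \;\leq\; C\|\nu^\eps\|_{\scH^{k+1}(m^\eps)}^2.
\]
Integrating this inequality and using the explicit form \eqref{expression for theta eps} of $\theta^\eps$ extracts $\|q^\eps\|_{\scH^k(m^\eps)}(t) \lesssim \|\nu_0^\eps\|_{\scH^{k+1}(m^\eps)}\bigl(\sqrt\eps + \min\{1, e^{-\alpha_* t/\eps}\eps^{-\alpha_*/(2m_*)}\}\bigr)$, the second term accounting for ill-prepared initial data through the exponent $\alpha_*$.

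The marginal $\delta^\eps$ obeys $\partial_t\delta^\eps - b\partial_w(w\delta^\eps) = -a\theta^\eps\partial_w\int_\R v\, q^\eps\,\dD v$, where $\nu^\eps$ has been replaced by $q^\eps$ since the first $v$-moment of $\cM_{\rho_0^\eps}$ vanishes. The condition $\kappa > 1/(2b)$ on the $w$-weight ensures that the drift $-b\partial_w(w\cdot)$ is controlled in weighted $L^2(\ols{m})$ up to a constant exponential growth, so a weighted $\scH^k(\ols{m})$ energy estimate yields
\[
\|\delta^\eps(t)\|_{\scH^k(\ols{m})} \;\leq\; e^{Ct}\Bigl(\|\delta_0^\eps\|_{\scH^k(\ols{m})} + C\int_0^t \theta^\eps(s)\|q^\eps(s)\|_{\scH^{k+1}(m^\eps)}\,\dD s\Bigr).
\]
Combined with the $q^\eps$ bound above and with the reconstruction $\nu^\eps-\nu = q^\eps + \cM_{\rho_0^\eps}\otimes\delta^\eps$, this yields item \ref{item 1 th2} for both $k=0,1$. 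For the sharper rate in item \ref{item2 th21}, I would plug the $\scH^1$ control of $q^\eps$ (requiring $\scH^2$ regularity on the initial datum) into the marginal estimate at $k=0$ and evaluate $\int_0^t \theta^\eps(s)^2\|q^\eps(s)\|_{\scH^1(m^\eps)}^2\,\dD s$ by splitting at the initial-layer cutoff $s_\star \sim \eps|\ln\eps|$: the contribution of the layer yields an extra $\eps^2|\ln\eps|$ term, whose square root produces the $\eps\sqrt{|\ln\eps|+1}$ factor.

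The main obstacle is the singular cross term $-w/\theta^\eps$ appearing in the $v$-component of $\mathbf{b}_0^\eps$ through the definition of $B^\eps_0$: when $\partial_w^l$ is commuted with the transport, it creates contributions of size $|\theta^\eps|^{-1}\|\partial_v\partial_w^{l-1}\nu^\eps\|$ that only close when absorbed into the dissipation coming from $|\theta^\eps|^{-2}\cF_{\rho_0^\eps}$, in a hypocoercivity-type argument analogous to the kinetic works \cite{El Ghani/ Masmoudi,Herda,Herda/Rodrigues} mentioned in the introduction. The second delicate point is the careful bookkeeping of the initial layer in item \ref{item2 th21}, which forces the appearance of the unavoidable $\sqrt{|\ln\eps|+1}$ loss.
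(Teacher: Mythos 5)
Your proposal follows essentially the same route as the paper: you use the same orthogonal decomposition $\nu^\eps - \nu = \nu^\eps_\bot + \cM_{\rho_0^\eps}\otimes(\ols{\nu}^\eps - \ols{\nu})$, the same uniform $\scH^{k+1}(m^\eps)$ propagation estimate (the paper's Proposition~\ref{estime L 2 m g eps}), the same Gaussian--Poincar\'e dissipation argument for $\nu^\eps_\bot$ (Proposition~\ref{estimee:nu orth}), and the same source-term estimate plus Cauchy--Schwarz with an initial-layer cutoff for the marginal (Lemma~\ref{lemme technique nu bar} and Proposition~\ref{estimee:bar nu bar nu eps}), leading to the $\eps\sqrt{|\ln\eps|}$ rate. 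The only minor inaccuracy is attributing the constraint $\kappa>1/(2b)$ to the marginal equation rather than to the absorption of the transport term $-w/\theta^\eps$ into the Fokker--Planck dissipation in the $\nu^\eps_\bot$ estimate, where it actually enters.
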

The proof of this result is provided in Section \ref{sec2} and relies on regularity estimates for the solution $\nu^\eps$ to equation \eqref{nu:eq}. These regularity estimates allow us to bound the source term which appears in the right hand side of equation \eqref{bar:nu eps:eq}.\\

We now interpret the latter theorem in terms
$\mu^\eps$. Let us emphasize that since $m^\eps$ defined by \eqref{def L 2 m} depends on $\eps$ through the spatial distribution $\rho_0^\eps$, we introduce weights which do not depend on $\eps$ anymore and which are meant to upper and lower bound $m^\eps$. We consider $
\ds
\left(\bx,\bu
\right)$ lying in 
$
\ds
K\times\R^2
$ and define
\begin{equation*}
	\left\{
	\begin{array}{l}
		\displaystyle  m^-_{\bx}(\bu)
		\,=
		\,
		\left(\,
		\rho_0(\bx)\,\kappa\,
		\right)^{-\frac{1}{2}}
		% \frac{1}{
			% \sqrt{
				% \rho_0(\bx)\,\kappa
				% }
			% }
		\,
		\exp{
			\left(\,
			\frac{1}{8}
			\left(\,
			\rho_0(\bx)\,|v|^2
			\,+\,
			\kappa\,
			|w|^2\,
			\right)
			\right)
		}
		\,,\\[1.1em]
		\displaystyle  \ols{m}^-(w)
		\,=
		\,\kappa^{-\frac{1}{2}}
		% \frac{1}{
			% \sqrt{
				% \rho_0(\bx)\,\kappa
				% }
			% }
		\,
		\exp{
			\left(\,
			\frac{\kappa}{8}\,\,
			|w|^2\,
			\right)
		}
		\,,\\[1.1em]
		\displaystyle  m^+_{\bx}(\bu)
		\,=
		\,
		\left(\,
		\rho_0(\bx)\,\kappa\,
		\right)^{-\frac{1}{2}}
		\,
		\exp{
			\left(\,
			2
			\left(\,
			\rho_0(\bx)\,|v|^2
			\,+\,
			\kappa\,
			|w|^2\,
			\right)
			\right)
		}
		\,,\\[1.1em]
		\displaystyle  \ols{m}^+(w)
		\,=
		\,\kappa^{-\frac{1}{2}}
		\,
		\exp{
			\left(\,
			2
			\,
			\kappa\,
			|w|^2\,
			\right)
		}
		\,.
	\end{array}
	\right.\\[0.8em]
\end{equation*}
% \begin{equation*}
	% m^-_{\bx}(\bu)
	% \,=
	% \,
	% \left(\,
	% \rho_0(\bx)\,\kappa\,
	% \right)^{-\frac{1}{2}}
	% % \frac{1}{
		% % \sqrt{
			% % \rho_0(\bx)\,\kappa
			% % }
		% % }
	% \,
	% \exp{
		% \left(\,
		% \frac{1}{8}
		% \left(\,
		% \rho_0(\bx)\,|v|^2
		% \,+\,
		% \kappa\,
		% |w|^2\,
		% \right)
		% \right)
		% }
	% \,,
	% \end{equation*}
% \begin{equation*}
	% \ols{m}^-(w)
	% \,=
	% \,\kappa^{-\frac{1}{2}}
	% % \frac{1}{
		% % \sqrt{
			% % \rho_0(\bx)\,\kappa
			% % }
		% % }
	% \,
	% \exp{
		% \left(\,
		% \frac{\kappa}{8}\,\,
		% |w|^2\,
		% \right)
		% }
	% \,,
	% \end{equation*}
% \begin{equation*}
	% m^+_{\bx}(\bu)
	% \,=
	% \,
	% \left(\,
	% \rho_0(\bx)\,\kappa\,
	% \right)^{-\frac{1}{2}}
	% \,
	% \exp{
		% \left(\,
		% 2
		% \left(\,
		% \rho_0(\bx)\,|v|^2
		% \,+\,
		% \kappa\,
		% |w|^2\,
		% \right)
		% \right)
		% }
	% \,,
	% \end{equation*}
% \begin{equation*}
	% \ols{m}^+(w)
	% \,=
	% \,\kappa^{-\frac{1}{2}}
	% \,
	% \exp{
		% \left(\,
		% 2
		% \,
		% \kappa\,
		% |w|^2\,
		% \right)
		% }
	% \,,
	% \end{equation*}
With these notations, our result reads as follows
\begin{theorem}\label{th21}
	Under the assumptions of Theorem \ref{th:2} consider the unique sequence of solutions $(\mu^\eps)_{\eps\,>\,0}$ to \eqref{kinetic:eq} and the solution $\ds
	\left(\cV\,,\,\ols{\mu}
	\right)$
	to \eqref{macro:eq} with initial condition $\ds
	\left(\cV\,,\,\ols{\mu}
	\right)$ satisfying \eqref{hyp:macro}. The following results hold true
	\begin{enumerate}
		\item\label{cv mu eps H 0}
		Consider $k$ in $\ds \left\{0\,,\,1\right\}$ and suppose
		\begin{equation}\label{hypf0:mu H k + 1}
			\sup_{\eps \,>\,0}\,
			\left\|
			\,\mu^\eps_0\,
			\right\|_{\scH^{k+1}(m ^+)}\,
			\,<\,
			+\,\infty\,,
		\end{equation}
		as well as the following compatibility assumption
		\begin{equation}\label{th:2 compatibility assumption 1}
			\left\|\,
			\mathcal{U}_0
			\,-\,
			\mathcal{U}_0^\eps\,
			\right\|_{L^{\infty}(K)}
			\,+\,
			\|\,
			\rho_0
			-
			\rho_0^\eps\,
			\|_{L^{\infty}(K)}\,+\,
			\|\,\ols{\mu}_0^\eps\,-\,\ols{\mu}_0\,\|_{
				\scH^{k}(\ols{m}^+)}
			\,
			\underset{\eps \rightarrow 0}{=}
			\,
			O\left(\,\eps\,\right)
			\,.
		\end{equation}
		Moreover, suppose that there exists a constant $C$ such that
		\begin{equation}\label{continuite L 2 bar mu}
			\sup_{\eps\,>\,0}\,
			\left\|\,
			\ols{\mu}_0
			\,-\,
			\tau_{\,
				w_0
			}\,
			\bar{\mu}_0
			\,
			\right\|_{\mathcal{H}^{k}
				\left(
				\ols{m}^{+}
				\right)}
			\,
			\leq
			\,
			C\,\left|\,w_0\,\right|\,,
			\quad
			\forall\, w_0\in \R
			\,.
		\end{equation}
		Then, for all $i\in\N$ and under the constraint $\alpha_*<\min{\left\{m_*/2\,,\,1-(2b\kappa)^{-1}\right\}}$, there exists 
		$\ds
		(C_i\,,\,\eps_0)\in 
		\left(\R_+^*\right)^2
		$ such that for all $\eps$ less than $\eps_0$, it holds
		for all $t \in \R_+$,
		\[
		\|
		\left(
		v\,-\,\mathcal{V}
		\right)^{i}
		\,
		\left(
		\mu^\eps
		\,-\,
		\mu
		\right)(t)
		\,
		\|_{\mathcal{H}^k
			\left(
			m^-
			\right)}
		\,\leq\,
		C_i\,e^{C_i\,
			\left(
			t
			\,+\,
			\eps\,e^{C_i\,t}
			\right)}\,
		\left(\,
		\eps^{\frac{i}{2}\,+\,\frac{1}{4}}
		\,+\,
		e^{
			\,
			-\,\alpha_*\frac{t}{\eps}\,
		}
		\,
		\eps^{-\,\frac{1}{2}}\,
		\right)\,,  
		\]
		where the limit $\mu$ is given by
		\[
		\mu
		\,=\,
		\mathcal{M}_{
			\rho_0\,
			\left|
			\theta^\eps
			\right|^{-2}
		}
		\left(
		v\,-\,\cV
		\right)
		\otimes
		\ols{\mu}\,.
		\]
		\item\label{item 2 th 22} Suppose assumption \eqref{hypf0:mu H k + 1} with $k=1$, assumption \eqref{continuite L 2 bar mu} with $k=0$ and 
		\begin{equation*}
			\left\|\,
			\mathcal{U}_0
			\,-\,
			\mathcal{U}_0^\eps\,
			\right\|_{L^{\infty}(K)}
			\,+\,
			\|\,
			\rho_0
			-
			\rho_0^\eps\,
			\|_{L^{\infty}(K)}\,+\,
			\|\,\ols{\mu}_0^\eps\,-\,\ols{\mu}_0\,\|_{
				\scH^0(\ols{m}^+)}
			\,
			\underset{\eps \rightarrow 0}{=}
			\,
			O\left(\,\eps
			\,\sqrt{\left|\,\ln{\eps}\,\right|}
			\,\right)
			\,.
		\end{equation*}
		There exists 
		$\ds
		(C\,,\,\eps_0)\in 
		\left(\R_+^*\right)^2
		$ such that for all $\eps$ less than $\eps_0$, it holds
		\begin{equation*}
			\left.
			\|\,\ols{\mu}^\eps(t)\,-\,\ols{\mu}(t)\,\|_{\scH^{0}(\ols{m}^-)}
			\,\leq\,
			C\,
			e^{Ct}\,
			\eps\,\sqrt{
				\left|\,\ln{\eps}\,\right|
			}\,,\quad \forall\,t \in \R_+  \,
			.
			\right.\\[0,7em]
		\end{equation*}
	\end{enumerate}
\end{theorem}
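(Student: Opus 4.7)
The plan is to deduce Theorem \ref{th21} from Theorem \ref{th:2} by inverting the change of variables \eqref{def:nueps}, with the residual macroscopic errors handled via Proposition \ref{th:preliminary} and the compatibility assumption \eqref{th:2 compatibility assumption 1}. I introduce the rescaling operator $T^\eps$ defined by
\[
T^\eps[\nu](t,\bx,\bu) \;=\; (\theta^\eps)^{-1}\,\nu\bigl(t,\bx,(v-\mathcal{V}^\eps)/\theta^\eps,\, w-\mathcal{W}^\eps\bigr),
\]
so that $\mu^\eps = T^\eps \nu^\eps$, and observe that the limiting profile factors as $\mu = (\theta^\eps)^{-1}\, \mathcal{M}_{\rho_0}((v-\mathcal{V})/\theta^\eps) \otimes \ols{\mu}$, together with the identity $\ols{\mu}(t,\bx,w) = \ols{\nu}(t,\bx, w - \mathcal{W}(t,\bx))$ (which follows from uniqueness for the linear transport equation \eqref{bar nu:eq}, once the initial data are matched by translation by $\mathcal{W}_0$). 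This yields the starting decomposition
\[
\mu^\eps - \mu \;=\; T^\eps\bigl[\nu^\eps - \mathcal{M}_{\rho_0^\eps} \otimes \ols{\nu}\bigr] \;+\; \bigl(T^\eps[\mathcal{M}_{\rho_0^\eps} \otimes \ols{\nu}] - \mu\bigr).
\]
The principal term is controlled by Theorem \ref{th:2}, while the consistency term is bounded by elementary Gaussian estimates combined with the bound $|\rho_0^\eps - \rho_0| + |\mathcal{V}^\eps - \mathcal{V}| + |\mathcal{W}^\eps - \mathcal{W}| = O(\eps)$, which comes from Proposition \ref{th:preliminary} and \eqref{th:2 compatibility assumption 1}.

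For item 1, the core step is a change-of-variables estimate between the norm $\|\cdot\|_{\mathcal{H}^k(m^-)}$ in the $\bu$-variable and $\|\cdot\|_{\mathcal{H}^k(m^\eps)}$ in the rescaled variables. Performing $v = \mathcal{V}^\eps + \theta^\eps v'$, $w = w' + \mathcal{W}^\eps$, the Jacobian produces a factor $\theta^\eps$; the weights $m^\pm$ are designed precisely so that $m^-_{\bx}(\mathcal{V}^\eps + \theta^\eps v', w' + \mathcal{W}^\eps) \leq C\, m^\eps_{\bx}(v',w')$, and symmetrically $m^\eps \leq C\, m^+$ after the reverse substitution (used to transfer the initial-data assumption \eqref{hypf0:mu H k + 1} to the framework of Theorem \ref{th:2}). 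Both compatibilities require $\eps \leq \eps_0$ small. This yields
\[
\|T^\eps[g]\|_{\mathcal{H}^k(m^-)} \;\leq\; C\,(\theta^\eps)^{-1/2}\, \|g\|_{\mathcal{H}^k(m^\eps)};
\]
combined with $\theta^\eps \sim \sqrt{\eps}$ and Theorem \ref{th:2}, this delivers the $\eps^{1/4} + e^{-\alpha_* t/\eps}\, \eps^{-1/2}$ bound for $i=0$. For $i \geq 1$, I write $v - \mathcal{V} = \theta^\eps v' + (\mathcal{V}^\eps - \mathcal{V})$ before rescaling; raising this identity to the $i$-th power produces a polynomial in $v'$ with coefficients of order $|\theta^\eps|^i + |\mathcal{V}^\eps - \mathcal{V}|^i = O(\eps^{i/2})$, furnishing the extra $\eps^{i/2}$ factor. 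The new constraint $\alpha_* < m_*/2$ is precisely the price paid so that the $\eps^{-1/2}$ from the Jacobian is absorbed by $\eps^{-\alpha_*/(2m_*)}$ in the initial-layer correction of Theorem \ref{th:2}.

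For item 2, integrating the rescaling identity in $v$ directly yields $\ols{\mu}^\eps(t,\bx,w) = \ols{\nu}^\eps(t,\bx, w - \mathcal{W}^\eps)$ and, likewise, $\ols{\mu}(t,\bx,w) = \ols{\nu}(t,\bx, w - \mathcal{W})$, hence
\[
\ols{\mu}^\eps - \ols{\mu} \;=\; (\ols{\nu}^\eps - \ols{\nu})(\,\cdot\,,\,\cdot\,, w - \mathcal{W}^\eps) \;+\; \bigl[\ols{\nu}(\,\cdot\,,\,\cdot\,, w - \mathcal{W}^\eps) - \ols{\nu}(\,\cdot\,,\,\cdot\,, w - \mathcal{W})\bigr].
\]
The first piece is $O(\eps\sqrt{|\ln \eps|})$ by Theorem \ref{th:2}, item 2, after absorbing the translation by $\mathcal{W}^\eps$ into the weight $\ols{m}^-$ (possible since $\ols{m}^-$ is strictly less peaked than $\ols{m}$). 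The second piece is bounded by $|\mathcal{W}^\eps - \mathcal{W}|$ times an $\ols{m}^-$-equicontinuity modulus of $\ols{\nu}$ in $w$, finite thanks to assumption \eqref{continuite L 2 bar mu} propagated along \eqref{bar nu:eq}; the strengthened compatibility assumption in item 2 then supplies $|\mathcal{W}^\eps - \mathcal{W}| = O(\eps\sqrt{|\ln \eps|})$ via Proposition \ref{th:preliminary}.

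The principal obstacle is the careful bookkeeping of the weighted norms across the rescaling: the weight $m^\eps$ is adapted to $\nu^\eps$ whereas $m^\pm$ is adapted to $\mu^\eps$, and ensuring mutual control under the change of variables forces the smallness constraint $\eps \leq \eps_0$. A secondary delicate point is the interplay between the exponentially decaying initial-layer correction $e^{-\alpha_* t/\eps}$ supplied by Theorem \ref{th:2} and the Gronwall factor $e^{Ct}$ arising from propagation of the macroscopic drifts; when combined through the decomposition above they produce the compound exponential $e^{C(t + \eps e^{Ct})}$ that appears in the final bound.
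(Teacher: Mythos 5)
Your proposal is correct and follows essentially the same route as the paper: rescale back by the operator $T^\eps$ (what the paper writes as $\tau_{-\,\mathcal{U}^\eps}\circ D_{\theta^\eps}$), split $\mu^\eps-\mu$ into the rescaled $\|\nu^\eps-\nu\|_{\scH^k(m^\eps)}$ error controlled by Theorem \ref{th:2} plus a consistency term driven by $\|\mathcal U-\mathcal U^\eps\|$ and $\|\rho_0-\rho_0^\eps\|$, use the $m^-\lesssim m^\eps\lesssim m^+$ weight sandwich with a $\sqrt 2$ dilation to eat the extra polynomial $v^{2i}$, and identify $\alpha_*<m_*/2$ as the condition that lets the initial-layer exponential absorb the $\eps^{-1/4}$ Jacobian loss. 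One genuine improvement in your write-up is that you make explicit the identity $\ols{\mu}(t,\bx,w)=\ols{\nu}(t,\bx,w-\cW(t,\bx))$ and its verification via uniqueness for the translated transport equation — the paper uses this implicitly when it rewrites $\mu$ as a shifted $\cM_{\rho_0}\otimes\ols\nu$, so making it a named step clarifies where the translation by $\cW^\eps-\cW$ (and hence Proposition \ref{th:preliminary}) enters. One minor imprecision: you write that the coefficients of the expansion of $(v-\cV)^i$ are $O(\eps^{i/2})$, but $\theta^\eps\in[\sqrt\eps,1]$ is \emph{not} uniformly $O(\sqrt\eps)$ — the correct bound is $(\theta^\eps)^i\leq C\bigl(\eps^{i/2}+e^{-i m_* t/\eps}\bigr)$, and it is precisely the cross-product of this transient exponential with the $\eps^{-1/4}$ loss and the $e^{-\alpha_* t/\eps}\eps^{-\alpha_*/(2m_*)}$ term from Theorem \ref{th:2} that makes the constraint $\alpha_*<m_*/2$ necessary; as stated your heuristic would not force that constraint.
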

This result is a straightforward consequence of
Theorem \ref{th:2} and the convergence estimates for
the macroscopic quantities given by item \eqref{cv macro q} Proposition
\ref{th:preliminary}. We postpone the proof to Section \ref{proof
	21} and make a few comments. On the one hand, we achieve pointwise in time
convergence estimates, which is  an improvement in comparison to our
result in the $L^1$ setting. This is made possible thanks to the
regularity results obtained for $\nu^\eps$, which we were not able
to obtain in the $L^1$ setting. On the other hand, we recover the optimal convergence rate for the marginal $\ols{\mu}^\eps$ of $\mu^\eps$ towards the limit $\ols{\mu}$, up to a logarithmic correction. The logarithmic correction arises due to the fact that we do not consider well prepared initial data (see Proposition \ref{estimee:bar nu bar nu eps} for more details). In the  statement \eqref{cv mu eps H 0}, we prove convergence with rate $\displaystyle
O(\,\eps^{i}\,)
$ for all $i$. This is specific to the structure of the weighted $L^2$ spaces in this result.

\subsection{Useful estimates}
Before proving our main results, we
remind here uniform estimates with respect to $\eps$,  already established in \cite{BF}, for the moments of $\mu^\eps$ and  for the relative energy given by
\begin{equation*}
	\left\{
	\begin{array}{l}
		\ds M_{q}
		\left[\,\mu^\eps\,
		\right](t,\bx)
		\,:=\,\int_{\R^2}
		|\bu|^{q}
		\,
		\mu^\eps(t,\bx,\bu)\,\dD \bu\,,
		\\[1.1em]
		\ds D_{q}
		\left[\,\mu^\eps\,
		\right](t,\bx)\,:=\, 
		\int_{\R^2} |v- \cV^\eps(t,\bx)|^{q} \,
		\mu^\eps(t,\bx,\bu)\,\dD \bu\,,
	\end{array}\right.
\end{equation*}
where $q\geq 2$. 
\begin{proposition}
	\label{th:preliminary}
	Under assumptions \eqref{hyp1:N}-\eqref{hyp2:N} on the drift $N$, \eqref{hyp2:psi} on $\Psi$, \eqref{hyp:rho0}-\eqref{hyp2:f0} on the initial conditions $\mu^\eps_0$ consider the unique solutions $\mu^\eps$ and
	$
	\left(
	\mathcal{V},\,
	\bar{\mu}
	\right)
	$ to \eqref{kinetic:eq} and \eqref{macro:eq}. Furthermore, define the initial macroscopic error as
	\[
	\mathcal{E}_{\mathrm{mac}}
	\,=\,
	\left\|\,
	\mathcal{U}_0
	\,-\,
	\mathcal{U}_0^\eps\,
	\right\|_{L^{\infty}(K)}
	\,+\,
	\|\,
	\rho_0
	-
	\rho_0^\eps\,
	\|_{L^{\infty}(K)}\,.
	\]
	There exists 
	$\ds (C\,,\,\eps_0)\, \in \,
	\left(
	\R^+_*
	\right)^2
	$
	such that
	\begin{enumerate}
		\item\label{cv macro q} for all $\eps \leq \eps_0$, it holds
		\[
		\left\|\,
		\mathcal{U}(t)
		\,-\,
		\mathcal{U}^\eps(t)\,
		\right\|_{L^{\infty}(K)}
		\,\leq\,
		C\,
		\min
		{
			\left(\,
			e^{C\,t}
			\left(\,
			\mathcal{E}_{\mathrm{mac}}\,
			+\,
			\eps\,
			\right)\,,\,
			1\,
			\right)}\,,
		\quad\quad
		\forall\,t \in \R^+\,,
		\]
		where $\cU^\eps$ and $\cU$ are respectively given by \eqref{macro:q} and \eqref{macro:eq}.\\
		\item\label{estimate moment mu} For all $\eps >0$ and all $q$ in 
		$
		\ds
		[2,\,2p]$ it holds
		\[
		M_{q}[\,\mu^\eps\,](t,\bx)
		\,\,\leq\,\,
		C\,, \quad\quad\forall \,(t,\bx)\, \in\,\R^+\times K,
		\]
		where exponent $p$ is given in assumption \eqref{hyp2:N}. In particular, $\cU^\eps$ is uniformly bounded with respect to both 
		$\ds(t\,,\,\bx) \in \R^+\times K$ and
		$\eps$.\\
		\item\label{estimate rel energy mu} For all $\eps >0$ and all $q$ in 
		$
		\ds
		[\,2,\,2p\,]$ it holds
		\[
		D_q[\,\mu^\eps\,](t,\bx)
		\,\,\leq\,\,
		C \,\left[\,\exp \left(-q\,m_*\,\frac{t}{\eps}\, \right)
		\,+\,
		\eps^{\frac{q}{2}}\,\right], \quad\forall (t,\bx)\in\R^+\times K\,.
		\]
		\item\label{estimate error} For all $\eps >0$ we have
		\[
		\left|\,\mathcal{E}(\,\mu^\eps
		\left(t\,,\,\bx\,,\,\cdot\,
		\right)\,)\,\right| 
		\,\leq\, 
		C\,\left[\,\exp \left(-2\,m_*\,\frac{t}{\eps}\, \right)
		\,+\,
		\eps\,\right]\,,\quad\forall\,
		(t\,,\,\bx) \in \R^+ \times\,K\,,
		\]
		where $\ds \mathcal{E}$ is defined by \eqref{error}.
	\end{enumerate}
	%	In this theorem, the constant $C$ only depends on $m_p$, $\ols{m}_p$, $m_*$ (see	\eqref{hyp:rho0}-\eqref{hyp2:f0} ) and on the data of the problem $N$, $A_0$ and $\Psi$.
\end{proposition}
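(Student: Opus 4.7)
Since Proposition \ref{th:preliminary} is presented as a collection of uniform-in-$\eps$ a priori estimates already established in \cite{BF}, my plan would be to either cite \cite{BF} directly or to reprove the four items sequentially, following the standard hierarchy for Vlasov-Fokker-Planck models: first moment bounds (item \eqref{estimate moment mu}), then relative energy decay (item \eqref{estimate rel energy mu}), then the error control \eqref{estimate error}, and finally the ODE comparison for the macroscopic quantities (item \eqref{cv macro q}). The key structural observation is that the short-range term $(\rho_0^\eps/\eps)\partial_v((v-\cV^\eps)\mu^\eps)$ behaves as an Ornstein--Uhlenbeck drift centered at $\cV^\eps$ with contraction rate $\rho_0^\eps/\eps \ge m_*/\eps$, which furnishes both coercivity in $v$ and the Gaussian equilibrium with variance $\eps/\rho_0^\eps$.

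For item \eqref{estimate moment mu}, I would test \eqref{kinetic:eq} against $|\bu|^q$ for $q\in[2,2p]$ and integrate by parts. Assumption \eqref{hyp1:N} yields $vN(v)\le -C|v|^{p+1}+C'$, producing a coercive term that dominates the polynomial source arising from $A_0$ (controlled by Young's inequality and $M_{q-1}$). The non-local interaction $\mathcal{K}_\Psi[\rho_0^\eps\mu^\eps]$ is handled via Hölder with exponent $r'$ from \eqref{hyp2:psi}, which justifies precisely the higher moment assumption \eqref{hyp2:f0} once one takes the supremum in $\bx$. The short-range term, being dissipative, only helps. Grönwall then gives a bound uniform in $\eps$ and in $(t,\bx)$. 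For item \eqref{estimate rel energy mu}, I would rewrite the equation in the variable $v-\cV^\eps$ (using the ODE on $\cV^\eps$) and test against $|v-\cV^\eps|^q$: the short-range term produces the dissipation $-qm_*\eps^{-1}D_q$, the diffusion produces a source $q(q-1)D_{q-2}$, and the remaining $\eps$-independent drift and coupling contributions are absorbed via Young and item \eqref{estimate moment mu}. Integrating the resulting differential inequality $\partial_t D_q + (qm_*/\eps) D_q \lesssim 1$ gives exactly the initial-layer $\exp(-qm_*t/\eps)$ plus asymptotic $\eps^{q/2}$ split.

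Item \eqref{estimate error} follows from a Taylor expansion: by definition of $\cV^\eps$ one has $\int (v-\cV^\eps)\mu^\eps\dD\bu=0$, so
\[
\cE(\mu^\eps) \,=\, \int_{\R^2}\bigl(N(v)-N(\cV^\eps)-N'(\cV^\eps)(v-\cV^\eps)\bigr)\mu^\eps\,\dD\bu\,,
\]
which, after splitting $\R^2$ into $\{|v-\cV^\eps|\le R\}$ and its complement and using \eqref{hyp2:N} together with items \eqref{estimate moment mu}--\eqref{estimate rel energy mu}, yields the same $\exp(-2m_*t/\eps)+\eps$ bound. Item \eqref{cv macro q} then follows by integrating \eqref{kinetic:eq} against $v$ and $w$ to obtain the closed ODE system
\[
\partial_t\cV^\eps \,=\, N(\cV^\eps)-\cW^\eps-\mathcal{L}_{\rho_0^\eps}[\cV^\eps]\,+\,\cE(\mu^\eps)\,,\qquad \partial_t\cW^\eps\,=\,a\cV^\eps-b\cW^\eps+c\,,
\]
which differs from \eqref{macro:eq} only through $\cE(\mu^\eps)$ and the gap $\rho_0^\eps-\rho_0$. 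A standard Grönwall argument on $\cU^\eps-\cU$, using item \eqref{estimate error} and the Lipschitz character of $N$ on the uniform $L^\infty$ window provided by item \eqref{estimate moment mu}, closes the estimate; the $\min(\,\cdot\,,1)$ merely reflects the crude a priori bound on both macroscopic quantities.

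The main obstacle is the interplay between items \eqref{estimate moment mu} and \eqref{estimate rel energy mu} on the initial layer $t\lesssim \eps|\ln\eps|$: one must propagate high moments uniformly in $\eps$ while simultaneously using the singular $1/\eps$ contraction, and the cross term between $v$ and $w$ in the drift (coming from $A_0$) must be absorbed without losing a factor of $\eps^{-1}$. Handling this requires the explicit Young splitting with carefully tuned weights between $D_q$ and $M_{q-1}$, which is exactly the bookkeeping carried out in \cite{BF} and which we would simply invoke here.
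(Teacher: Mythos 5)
The paper itself does not reprove Proposition~\ref{th:preliminary}: it is presented as a collection of uniform-in-$\eps$ estimates lifted verbatim from \cite{BF} (Propositions 3.1, 3.3, 3.5 and 4.4 there), and the proof given in the text is a pointer to those references. You correctly identify this, and your plan to either cite \cite{BF} or reprove the four items in the order moments $\to$ relative energy $\to$ error $\to$ macroscopic Gr\"onwall matches the actual logical hierarchy of those estimates, so the approach is essentially the same. Your derivation of the system for $(\cV^\eps,\cW^\eps)$, including the vanishing of $\int(v-\cV^\eps)\mu^\eps\,\dD\bu$ against the short-range term and the identification of $\int\cK_\Psi[\rho_0^\eps\mu^\eps]\mu^\eps\,\dD\bu=\cL_{\rho_0^\eps}[\cV^\eps]$, is correct, as is the Taylor-expansion form of $\cE(\mu^\eps)$ once one uses that the first centered moment vanishes.

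One small inaccuracy worth flagging in your sketch: assumption \eqref{hyp1:N} only says $\limsup_{|v|\to\infty}\omega(v)=-\infty$ with $\omega(v)=N(v)/v$, which yields, for every $M>0$, a bound of the form $vN(v)\le -M\,v^2+C_M$ for $|v|$ large; it does not give the superlinear lower bound $vN(v)\le -C|v|^{p+1}+C'$ that you assert (assumption \eqref{hyp2:N} is an \emph{upper} bound $|\omega(v)|\lesssim|v|^{p-1}$, controlling how fast $N$ can grow, not how fast it must decay). The moment and relative-energy estimates still close, because a quadratic coercivity with arbitrarily large constant $M$ (together with the $1/\eps$ Ornstein--Uhlenbeck contraction) is enough to dominate the linear terms coming from $A_0$ and the interaction; but the phrasing should be corrected if this sketch were to be expanded into a full proof.
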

The proof of this result can be found in \cite{BF}. More precisely,
we refer to \cite[Proposition 4.4]{BF}  for the proof of \eqref{cv
	macro q},  \cite[Proposition 3.1]{BF}   for the proof of
\eqref{estimate moment mu} , \cite[Proposition 3.3]{BF}   for the
proof of \eqref{estimate rel energy mu} and  \cite[Proposition
3.5]{BF}  for the proof of \eqref{estimate error}.

\section{Convergence analysis in $L^1$}\label{sec1b}
In this section, we prove Theorem \ref{th1} which ensures the
convergence of $\nu^\eps$ towards the asymptotic profile
$\cM_{\rho_0^\eps}\otimes \bar{\nu}$ in a $L^1$
setting. In order to explain our argument, we
outline the main steps of our approach on a simplified example : the diffusive limit for the kinetic Fokker-Planck equation.
%\subsection{The diffusive limit for the kinetic Fokker-Planck equation}\label{sec:3.0.1}
We consider  the asymptotic limit $\eps\rightarrow 0$ of the following linear kinetic Fokker-Planck equation
\[
\partial_t\,f^\eps
\,+\,
\frac{1}{\eps}\,
\bv\cdot\nabla_{\bx}\,f^\eps
\,=\,
\frac{1}{\eps^2}\,
\nabla_{\bv}\cdot
\left(\,
\bv\,f^\eps\,+\,
\nabla_\bv\,f^\eps\,
\right)\,,
\]
where $(\bx,\bv)$ lie in the phase space $\R^d\times\R^d$. In this context, the challenge consists in proving that as $\eps$ vanishes, it holds
\[
f^\eps\left(t\,,\,\bx\,,\,\bv\right)
\underset{\eps\rightarrow 0}{\sim}
\mathcal{M}(\bv)\otimes\rho(t,\bx)\,,
\]
where $\rho$ is a solution to the heat equation 
\[
\partial_t\,\rho\,=\,
\Delta_{\bx}\,\rho\,,
\]
and where $\cM$ stands for the standard Maxwellian distribution over $\R^d$. Relying on a rather classical  free energy estimate, it is possible to prove that $f^\eps$ converges to the following local equilibrium of the Fokker-Planck operator
\[
\cM \otimes \rho^\eps\,,
\]
where the spatial density of particles $\rho^\eps$ is defined by
\[
\rho^\eps\,=\,
\int_{\R^d}f^\eps\,\dD\bv\,.
\]

Then, the difficulty lies in proving that the spatial density of particles $\rho^\eps$ converges to $\rho$.
The convergence analysis is made intricate by the transport operator, which keeps us from obtaining a closed equation on $\rho^\eps$
\[
\partial_t\,\rho^\eps
\,+\,
\frac{1}{\eps}\,
\nabla_{\bx}\cdot
\int_{\R^d}
\bv\,f^\eps\,\dD\,\bv\,=\,0\,.
\]
To overcome this difficulty, our strategy consists in considering the following re-scaled quantity
\[
\pi^\eps(t,\bx)\,=\,
\int_{\R^d}f^\eps
\left(t,\bx-\eps\,\bv,\bv
\right)
\,\dD\bv\,.
\]
On this simplified example, the advantage of considering $\pi^\eps$ instead of $\rho^\eps$ is straightforward as it turns out that $\pi^\eps$ is an exact solution of the limiting equation. Indeed, changing variables in the equation on $f^\eps$ and integrating with respect to $\bv$, we obtain
\[
\partial_t\,\pi^\eps\,=\,
\Delta_{\bx}\,\pi^\eps\,.
\]
Therefore, the convergence analysis comes down to proving that $\pi^\eps$ is close to $\rho^\eps$. It is possible to achieve this final step taking advantage of the following estimate
\[
\left\|\,
\rho^\epsilon\,-\,
\pi^\eps
\,\right\|_{L^1
	\left(
	\R^{d}
	\right)
}
\,\leq\,
\cA\,+\,\cB\,,
\]
where $\cA$ and $\cB$ are defined as follows
\begin{equation*}
	\left\{
	\begin{array}{l}
		\displaystyle  \cA 
		\,=\,
		\left\|
		\,
		\mathcal{M}
		\otimes
		\tau_{
			-\eps\,\bv
		}\,
		\rho^\eps
		\,-\,
		\tau_{
			-\eps\,\bv
		}\,
		f^\eps
		\,
		\right\|_{L^1
			\left(
			\R^{2d}
			\right)
		}
		\,
		,\\[1.1em]
		\displaystyle  \cB \,=\,
		\int_{\R^{d}}
		\mathcal{M}(\Tilde{\bv})\,
		\left\|
		\,
		f^\eps
		\,-\,
		\tau_{
			-\eps\,\Tilde{\bv}
		}\,
		f^\eps
		\,
		\right\|_{
			L^1
			\left(
			\R^{2d}
			\right)
		}\,\dD \Tilde{\bv}
		\,,
	\end{array}
	\right.
\end{equation*}
and where $\ds\tau_{
	\bx_0
}$ stands for the translation of vector $\bx_0$ with respect to the $\bx$-variable. To estimate $\cA$, we use the first step, which ensures that $\ds f^\eps$ is close to 
$\mathcal{M}
\otimes
\rho^\eps$. Then, to estimate $\cB$, it is sufficient to prove equicontinuity estimates for $f^\eps$, that is
\[
\left\|
\,
f^\eps
\,-\,
\tau_{
	\bx_0
}\,
f^\eps
\,
\right\|_{
	L^1
	\left(
	\R^{2d}
	\right)
}
\,\lesssim\,
\left|\,
\bx_0\,
\right|\,.
\]
In the forthcoming analysis, we adapt this argument in our context.

\subsection{\textit{A priori} estimates}

The main object of this section consists in deriving equicontinuity estimates for the sequence of solutions 
$\displaystyle
\left(
\nu^\eps
\right)_{\eps\,>\,0}
$ to equation \eqref{nu:eq}. To obtain this result, we make use of the following key result
\begin{lemma}\label{abstract lemma rel ent}
	Consider 
	$\delta$ in 
	$\ds
	\{
	0\,,\,1
	\}
	$
	and smooth solutions $f$ and $g$ to the following equations
	\\[-0.3em]
	\begin{equation*}
		\left\{
		\begin{array}{ll}
			\displaystyle  \partial_t\, f
			\,+\,
			\mathrm{div}_{\by}
			\left[
			\,\mathbf{a}
			\left(
			t\,,\,\by\,,\,\xi
			\right)
			\,f\, \right]
			\,+\,
			\lambda(t)\,
			\mathrm{div}_{\xi}
			\left[
			\,
			\left(\mathbf{b}_1
			\,+\,
			\mathbf{b}_3
			\right)
			\left(
			t\,,\,\by\,,\,\xi
			\right)
			\,f\, \right]
			
			\,=\,
			\lambda(t)^2\,
			\Delta_{\xi}\,
			f\,,\\[0.7em]
			\displaystyle  
			
			\partial_t\, g
			\,+\,
			\mathrm{div}_{\by}
			\left[
			\,\mathbf{a}
			\left(
			t\,,\,\by\,,\,\xi
			\right)
			\,g\, \right]
			\,+\,
			\lambda(t)\,
			\mathrm{div}_{\xi}
			\left[
			\,\mathbf{b}_2
			\left(
			t\,,\,\by\,,\,\xi
			\right)
			\,g\, \right]
			\,=\,
			\delta\,\lambda(t)^2\,
			\Delta_{\xi}\,
			g\,.
		\end{array}
		\right.\\[0.6em]
	\end{equation*}
	set on the phase space  
	$
	\ds
	\left(t,
	\by,\xi
	\right)
	\in
	\R^+\times
	\R^{d_1}\times\R^{d_2}
	$, with 
	$\ds d_1 \,\geq\, 0$ and
	$\ds d_2 \,\geq\, 1$, where 
	\[ 
	\left(
	\mathbf{a}
	\,:\,\R_+\times\R^{d_1}\times\R^{d_2}\,\rightarrow\,\R^{d_1}
	\right)\;\;
	\mathrm{and}\;\;
	\left(
	\mathbf{b}_{i}
	\,:\,\R_+\times\R^{d_1}\times\R^{d_2}\,\rightarrow\,\R^{d_2}
	\right)\,,
	\quad
	i\in\{1\,,\,2\,,\,3\}\,,
	\]
	are given vector fields and where $\lambda$ is a positive valued function. Suppose that $f$ and $g$ have positive values and are normalized as follows
	\[
	\int_{\R^{d_1+d_2}} f\,\dD\by\,\dD \xi\,=\,\int_{\R^{d_1+d_2}} g\,\dD\by\,\dD \xi\,=\,1\,.
	\]
	Then it holds for all time $t\geq0$
	\begin{equation}\label{estimee2:abstrat lemma}
		\left\|\,
		f(t)\,-\,
		g(t)\,
		\right\|_{L^1
			\left(
			\R^{d_1+d_2}
			\right)}
		\,\leq\,
		2\,\sqrt{2}\,
		\left(
		\left\|\,
		f_0\,-\,
		g_0\,
		\right\|_{L^1
			\left(
			\R^{d_1+d_2}
			\right)}^{\frac{1}{2}}
		\,+\,
		\left(
		\int_0^t
		\cR(s)\,\dD s\,
		\right)^{\frac{1}{2}}
		\right)\,,
	\end{equation}
	where $\cR$ is defined as
	\begin{equation*}
		\cR(t)
		\,=\,
		\int_{\R^{d_1+d_2}}
		\left(\,
		\frac{1}{4}\,
		\left|\,
		\mathbf{b}_1\,
		\,-\,
		\mathbf{b}_2\,
		\right|^2 f
		% \,
		% \dD\by\,\dD\xi
		\,+\,
		\lambda
		\,
		\left|\,
		\mathrm{div}_{\xi}
		\left[\,
		\mathbf{b}_3
		\,g
		\,+\,
		\left(\delta
		\,-\,1\right)\,\lambda\,
		\nabla_{\xi}\,
		g\,
		\,\right]\,\right|
		\right)(t\,,\,\by\,,\,\xi)\,
		\dD\by\,\dD\xi
		\,.
	\end{equation*}
\end{lemma}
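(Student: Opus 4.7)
The natural route is to pair a relative-entropy estimate with the Csisz\'ar--Kullback--Pinsker inequality $\|f-g\|_{L^1}^2 \leq 2H(f|g)$, where $H(f|g):=\int f\ln(f/g)\,\dD\by\dD\xi$. Since $f$ and $g$ have unit mass and share the $\by$-drift $\mathbf{a}$, formal differentiation combined with integration by parts shows that the $\mathbf{a}$-transport contributions cancel exactly, leaving, with $h:=f/g$,
\[
\frac{d}{dt}H(f|g)=\lambda\int(\mathbf{b}_1+\mathbf{b}_3-\mathbf{b}_2)\,g\cdot\nabla_\xi h\,\dD\by\dD\xi - \lambda^2\int g\,\frac{|\nabla_\xi h|^2}{h}\,\dD\by\dD\xi + (\delta-1)\lambda^2\int \nabla_\xi h\cdot\nabla_\xi g\,\dD\by\dD\xi,
\]
the Fisher-information dissipation being the pivotal term which I intend to use for absorption.

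I would then split the drift discrepancy as $(\mathbf{b}_1-\mathbf{b}_2)+\mathbf{b}_3$. On the $(\mathbf{b}_1-\mathbf{b}_2)$ part, Young's inequality $ab \leq a^2/4+b^2$ applied with $a=(\mathbf{b}_1-\mathbf{b}_2)\sqrt{gh}$ and $b=\lambda\sqrt{g/h}\,\nabla_\xi h$ produces exactly $\tfrac{1}{4}\int|\mathbf{b}_1-\mathbf{b}_2|^2 f$ plus a copy of the Fisher information that cancels the dissipation, accounting for the first half of $\cR$. The residual pieces combine into $\lambda\int\nabla_\xi h\cdot[\mathbf{b}_3 g+(\delta-1)\lambda\nabla_\xi g]$, which after integration by parts in $\xi$ reads $-\lambda\int h\,\mathrm{div}_\xi[\mathbf{b}_3 g+(\delta-1)\lambda\nabla_\xi g]$. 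To replace the unbounded factor $h$ by a bare $L^1$-type integrand, I would route the initial-data discrepancy through a hybrid splitting: let $\tilde g$ solve $f$'s equation (drift $\mathbf{b}_1+\mathbf{b}_3$, diffusion $\lambda^2\Delta_\xi$) with initial datum $g_0$, so that $\|f-g\|_{L^1}\leq\|f-\tilde g\|_{L^1}+\|\tilde g-g\|_{L^1}$. The first piece is controlled by $\|f_0-g_0\|_{L^1}$ via the $L^1$-contraction of the shared Fokker--Planck semigroup, whereas $(\tilde g-g)$ starts from zero and satisfies the same equation with explicit forcing $\lambda\,\mathrm{div}_\xi[(\mathbf{b}_1-\mathbf{b}_2)g]+\lambda\,\mathrm{div}_\xi[\mathbf{b}_3 g+(\delta-1)\lambda\nabla_\xi g]$, whose two pieces are handled respectively by the entropy step above and by direct $L^1$-contraction with source.

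Assembling the contributions yields an inequality of the form $H(f|g)(t)\leq C\|f_0-g_0\|_{L^1}+C\int_0^t\cR(s)\,\dD s$ with $C=4$; applying Csisz\'ar--Kullback and the subadditivity $\sqrt{a+b}\leq\sqrt a+\sqrt b$ then delivers the announced $2\sqrt{2}$ prefactor. The non-standard $\|f_0-g_0\|_{L^1}^{1/2}$ on the right-hand side (rather than $H(f_0|g_0)^{1/2}$, which would be infinite in general) arises precisely because the initial-data discrepancy is transferred onto the $L^1$-contraction piece $\|f-\tilde g\|_{L^1}$ instead of onto a raw initial relative entropy. The main obstacle I anticipate lies in this last manoeuvre: the direct entropy computation leaves an unbounded factor $h=f/g$ on the remainder, and disentangling the $(\mathbf{b}_3,\delta)$-correction from the entropy dissipation requires exactly the hybrid splitting through $\tilde g$ that naturally produces the bare $L^1$-quantity $\lambda\int|\mathrm{div}_\xi[\mathbf{b}_3 g+(\delta-1)\lambda\nabla_\xi g]|$ appearing in $\cR$.
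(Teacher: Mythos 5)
Your diagnosis of the two obstacles with the classical relative entropy $H(f|g)=\int f\ln(f/g)$ is on point: the factor $h=f/g$ is unbounded on the remainder term, and $H(f_0|g_0)$ may be infinite. Your proposed repair through an intermediate $\tilde g$ does not, however, deliver the stated bound. The source in the equation for $\tilde g - g$ splits into $\lambda\,\mathrm{div}_\xi[(\mathbf{b}_1-\mathbf{b}_2)g]$ and $\lambda\,\mathrm{div}_\xi[\mathbf{b}_3 g+(\delta-1)\lambda\nabla_\xi g]$, and Duhamel plus $L^1$-contraction yields $\int_0^t\lambda\int|\mathrm{div}_\xi[(\mathbf{b}_1-\mathbf{b}_2)g]|\,\dD s$ from the first piece, which is \emph{not} comparable to the quadratic, derivative-free quantity $\frac{1}{4}\int|\mathbf{b}_1-\mathbf{b}_2|^2 f$ that appears in $\cR$. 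Saying the first piece is "handled by the entropy step above" conflates two incompatible bookkeeping schemes: the entropy step controls $H(f|g)$ (whose initial value is precisely what you are trying to avoid), while the $L^1$-contraction step controls $\|\tilde g - g\|_{L^1}$; there is no way to feed the Young-inequality absorption, which lives inside the Fisher-information budget of a relative-entropy computation, into the Duhamel integral for $\|\tilde g - g\|_{L^1}$.

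The paper resolves both obstacles at once by replacing $H(f|g)$ with the symmetrized quantity $H_{1/2}[f|g]=\int f\ln\bigl(2f/(f+g)\bigr)$, with $h:=(f+g)/2$. Because $h$ is the arithmetic mean, both ratios $f/h$ and $g/h$ are pointwise bounded by $2$, so the $\mathbf{b}_3$- and $\delta$-correction term is directly bounded by $\lambda\int|\mathrm{div}_\xi[\mathbf{b}_3 g+(\delta-1)\lambda\nabla_\xi g]|$ without any auxiliary trajectory. And because $H_{1/2}[f|g]\leq\|f-g\|_{L^1}$ (see \cite[Lemma A.1]{BF}), the initial contribution is automatically $\|f_0-g_0\|_{L^1}$, so no hybrid splitting is needed. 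The Young-inequality absorption into the (modified) Fisher information $I_{1/2}$ then produces $\frac{1}{4}\int|\mathbf{b}_1-\mathbf{b}_2|^2 f$ exactly as you anticipated, and the lower bound $\frac{1}{8}\|f-g\|_{L^1}^2\leq H_{1/2}[f|g]$ together with $\sqrt{a+b}\leq\sqrt a+\sqrt b$ gives the prefactor $2\sqrt 2$. You should retain your Young-inequality computation but run it against $H_{1/2}$ rather than $H$; the intermediate function $\tilde g$ is not needed and cannot be made to produce the correct $\cR$.
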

We postpone the proof of this result to Appendix \ref{sec:rel ent}. Thanks to the latter lemma, we prove the following equicontinuity estimate for solutions to \eqref{nu:eq}
\begin{proposition}\label{equicontinuity}
	Consider a sequence
	$
	\left(\nu^\eps
	\right)_{\eps\,>\,0}
	$
	of smooth solutions to equation \eqref{nu:eq} whose initial conditions meet assumption \eqref{hyp1 nu L 1}. There exists a positive constant $C$ independent of $\eps$ such that for all $\eps\,>\,0$, it holds
	\[
	\left\|\,
	\nu^\eps(t,\bx)\,-\,
	\tau_{w_0}\,\nu^\eps(t,\bx)\,
	\right\|_{
		L^1
		\left(
		\R^2
		\right)
	}
	\,\leq\,
	C
	\left(\,
	\left|\,e^{b\,t}\,w_0\,\right|
	\,+\,
	\left|\,e^{b\,t}\,w_0\,\right|^{\frac{1}{2}}\,
	\right)\,,\quad\quad
	\forall\,
	\left(t,\bx,w_0\right)
	\in \R^+\times K\times\R
	\,,
	\]
	where  $C$ is explicitly given by
	\[
	C
	\,=\,
	\sqrt{
		\max{
			\left(
			8\,m_1\,,\,
			1\,/\,b
			\right)
		}
	}\,,
	\]
	with $m_1$ defined in assumption \eqref{hyp1 nu L 1}.
\end{proposition}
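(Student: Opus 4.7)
The plan is to apply Lemma \ref{abstract lemma rel ent} with $f = \nu^\eps$ and a companion $g$ which coincides at the prescribed final time with $\tau_{w_0}\nu^\eps$. The naive choice $g = \tau_{w_0}\nu^\eps$ is ruled out because the $w$-drift $A_0(\theta^\eps v, w) = a\,\theta^\eps v - b\,w$ depends on $w$, so that $\tau_{w_0}\nu^\eps$ satisfies an equation whose $w$-drift differs from that of $\nu^\eps$ by the constant $-b\,w_0$; this violates the hypothesis that both equations share the same vector field $\mathbf{a}$. To absorb the mismatch I propagate the translation backward along the characteristic flow of $-b\,w$. Fixing $(t,\bx,w_0)$, I set
\[
h(s,\bx,v,w) \,:=\, \nu^\eps\bigl(s,\bx,v,w + \psi(s)\bigr),\qquad \psi(s) \,=\, w_0\,e^{b(t-s)},
\]
so that $h(t,\bx,\cdot) = \tau_{w_0}\nu^\eps(t,\bx,\cdot)$ and $h(0,\bx,\cdot) = \tau_{e^{bt}w_0}\nu^\eps_0(\bx,\cdot)$. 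Using $\dot\psi = -b\,\psi$ and the identity $A_0(\theta^\eps v, w+\psi) = A_0(\theta^\eps v, w) - b\,\psi$, a short computation shows that the advection $\dot\psi\,\partial_w h$ generated by the moving shift cancels the additional $-b\,\psi\,\partial_w h$ produced by the $w$-drift, so $h$ solves \eqref{nu:eq} verbatim except that $B^\eps_0(t,\bx,\theta^\eps v, \cdot)$ is evaluated at $w+\psi(s)$ in place of $w$.

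I then invoke Lemma \ref{abstract lemma rel ent} for each fixed $\bx \in K$ with $\xi = v$, $y = w$, $\lambda(s) = 1/|\theta^\eps(s,\bx)|$, $\mathbf{a}(s,y,\xi) = A_0(\theta^\eps \xi, y)$, $\delta = 1$, $\mathbf{b}_3 = 0$, and
\[
\mathbf{b}_1 \,=\, B^\eps_0(s,\bx,\theta^\eps v, w) \,-\, \lambda\,\rho_0^\eps\, v,\qquad \mathbf{b}_2 \,=\, B^\eps_0(s,\bx,\theta^\eps v, w+\psi) \,-\, \lambda\,\rho_0^\eps\, v.
\]
Since $B^\eps_0$ is affine with slope $-1$ in its $w$-argument, the difference reduces to $\mathbf{b}_1 - \mathbf{b}_2 = \psi(s) = w_0\,e^{b(t-s)}$, a constant in $(v,w)$. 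As $\nu^\eps(s,\bx,\cdot)$ integrates to one on $\R^2$, the remainder collapses to
\[
\int_0^t \cR(s)\,\dD s \,=\, \frac{|w_0|^2}{4}\int_0^t e^{2b(t-s)}\,\dD s \,\leq\, \frac{|w_0|^2\,e^{2bt}}{8\,b}.
\]

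Finally the initial discrepancy is controlled by assumption \eqref{hyp1 nu L 1} applied with shift $e^{bt}w_0$, yielding $\|\nu^\eps_0(\bx,\cdot) - \tau_{e^{bt}w_0}\nu^\eps_0(\bx,\cdot)\|_{L^1(\R^2)} \leq m_1\,|e^{bt}w_0|$. Plugging both bounds into \eqref{estimee2:abstrat lemma} gives
\[
\|\nu^\eps(t,\bx) - \tau_{w_0}\nu^\eps(t,\bx)\|_{L^1(\R^2)} \,\leq\, \sqrt{8\,m_1}\,|e^{bt}w_0|^{1/2} \,+\, \sqrt{1/b}\,|e^{bt}w_0|,
\]
which is majorized by $\sqrt{\max(8\,m_1,\,1/b)}\,(|e^{bt}w_0|^{1/2} + |e^{bt}w_0|)$ and proves the proposition. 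I expect the only non-routine step to be the commutation verification in the first paragraph: the time-dependent exponential shift $\psi$ is tailored precisely to equalize the $w$-drifts of the two equations and thereby bring them into the framework of Lemma \ref{abstract lemma rel ent}; everything else reduces to a direct calculation.
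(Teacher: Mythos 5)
Your proof is correct and achieves the same final bound as the paper, using the same key Lemma~\ref{abstract lemma rel ent} and the same underlying idea: propagate the $w$-translation along the characteristics of the drift $-b\,w$ so that it becomes a shift by $e^{bt}w_0$ at time zero, which assumption~\eqref{hyp1 nu L 1} then controls. Where you differ from the paper is in the implementation: the paper performs the scaling $w\mapsto e^{-bt}w$ (and multiplies by $e^{-bt}$ to preserve normalization), which removes the $w$-dependence from the $w$-drift entirely, and then applies a fixed translation $\tau_{w_0}$; you instead keep the original $w$ and use a time-dependent shift $\psi(s)=w_0\,e^{b(t-s)}$ whose derivative $\dot\psi=-b\psi$ is tuned to cancel the mismatch in the $w$-drift. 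The two devices are algebraically equivalent and yield identical expressions for $\mathbf{b}_1-\mathbf{b}_2$ and hence for $\cR$, the initial gap, and the final constant $C=\sqrt{\max(8m_1,\,1/b)}$. One minor advantage of your version is that you avoid the additional scaling of the distribution and the need to invert it at the end; one minor advantage of the paper's version is that the comparison function $g$ is just a constant translation of $f$, so the application of Lemma~\ref{abstract lemma rel ent} with $\mathbf{b}_2=\tau_{w_0}\mathbf{b}_1$ is visually transparent. The verification that $h$ solves the same equation (with $B^\eps_0$ evaluated at $w+\psi$) is the only place where care is needed, and your computation using $A_0(\theta^\eps v,w+\psi)=A_0(\theta^\eps v,w)-b\psi$ together with $\dot\psi=-b\psi$ is correct.
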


\begin{proof}
	We fix some $\bx$ in $K$, some positive $\eps$ and consider some $w_0$ in $\R$. Then we  define a re-scaled version $f$ of $\nu^\eps$
	\[
	f
	\left(
	t\,,\,w\,,\,v
	\right)
	\,=\,
	e^{-b\,t}\,
	\nu^{\eps}
	\left(
	t\,,\,\bx\,,\,v\,,\,e^{-b\,t}\,w
	\right)\,.
	\]
	We compute the equation solved by $f$ performing the change of variable
	\begin{equation}\label{change var h}
		w\,\mapsto\, e^{-b\,t}\,w
	\end{equation}
	in equation \eqref{nu:eq}, this yields
	\begin{equation*}
		% \label{h:eq}
		\partial_t\, f
		\,+\,
		\partial_{w}
		\left[
		e^{b\,t}\,
		A_0
		\left(
		\theta^\eps\,v\,,\,0
		\right)
		f\,
		\right]
		\,+\,
		\frac{1}{
			\theta^\eps}\,
		\partial_{v}
		\left[
		\,B^\eps_0
		\left(
		t\,,\,\bx\,,\,\theta^\eps\,v\,,\,
		e^{-b\,t}\,w
		\right)
		f\,
		\right]
		\,=\,
		\frac{1}{
			\left|
			\theta^\eps
			\right|^2}\,
		\cF_{\rho_0^\eps}
		\left[\,
		f\,
		\right]\,
		,
	\end{equation*}
	% \begin{equation*}
		% % \label{h:eq}
		% \partial_t\, f
		% \,+\,
		% \textrm{div}_{\bu}
		% \left[
		% \left(
		% \frac{1}{
			% \theta^\eps}
		% \,B^\eps_0
		% \left(
		% t\,,\,\bx\,,\,\theta^\eps\,v\,,\,
		% e^{-b\,t}\,w
		% \right)
		% ,\,
		% e^{b\,t}\,
		% A_0
		% \left(
		% \theta^\eps\,v\,,\,0
		% \right)
		% \right)^\top
		% f\,
		% \right]
		% \,=\,
		% \frac{1}{
			% \left|
			% \theta^\eps
			% \right|^2}\,
		% \cF_{\rho_0^\eps}
		% \left[\,
		% f\,
		% \right]\,
		% ,
		% \end{equation*}
	% \begin{equation}\label{h:eq}
		% \partial_t\, h^\eps
		% \,+\,
		% \frac{1}{\sqrt{\eps}}\,
		% \partial_{v}
		% \left[
		% \,B^\eps_0
		% \left(
		% t\,,\,\bx\,,\,\sqrt{\eps}\,v\,,\,
		% e^{-b\,t}\,w
		% \right)\,h^\eps\, \right]
		% \,+\,
		% % a\,e^{bt}\,
		% % \sqrt{\eps}\,v\,
		% % \partial_{w}\,h^\eps
		% e^{b\,t}\,
		% \partial_{w}
		% \left[\,
		% A_0
		% \left(
		% \sqrt{\eps}\,v\,,\,0
		% \right)
		% h^\eps\,
		% \right]
		% \,=\,
		% \frac{1}{\eps}\,
		% \cF_{\rho_0^\eps}
		% \left[\,
		% h^\eps\,
		% \right]
		% ,
		% \end{equation}
	where $A_0$ and $B^\eps_0$ are given by \eqref{def:b0}. Then, we define  
	$\ds
	g\,:=\,
	\tau_{w_0}\,f$, which solves the following equation
	\begin{equation*}
		% \label{h:eq}
		\partial_t\, g
		\,+\,
		\partial_{w}
		\left[
		e^{b\,t}\,
		A_0
		\left(
		\theta^\eps\,v\,,\,0
		\right)
		g\,
		\right]
		\,+\,
		\frac{1}{
			\theta^\eps}\,
		\partial_{v}
		\left[
		\,B^\eps_0
		\left(
		t\,,\,\bx\,,\,\theta^\eps\,v\,,\,
		e^{-b\,t}\,(w+w_0)
		\right)
		g\,
		\right]
		\,=\,
		\frac{1}{
			\left|
			\theta^\eps
			\right|^2}\,
		\cF_{\rho_0^\eps}
		\left[\,
		g\,
		\right]\,
		.
	\end{equation*}
	% \begin{equation*}
		% \partial_t\, g
		% \,+\,
		% \textrm{div}_{\bu}
		% \left[
		% \left(
		% \frac{1}{
			% \theta^\eps}
		% \,B^\eps_0
		% \left(
		% t\,,\,\bx\,,\,\theta^\eps\,v\,,\,
		% e^{-b\,t}\,
		% (w+w_0)
		% \right)
		% ,\,
		% e^{b\,t}\,
		% A_0
		% \left(
		% \theta^\eps\,v\,,\,0
		% \right)
		% \right)^\top
		% g\,
		% \right]
		% \,=\,
		% \frac{1}{
			% \left|
			% \theta^\eps
			% \right|^2}\,
		% \cF_{\rho_0^\eps}
		% \left[\,
		% g\,
		% \right]\,
		% ,
		% \end{equation*}
	Thanks to the change of variable \eqref{change var h},  coefficients inside the $w$-derivatives in the equations on $f$ and $g$ are the same. Hence, we can apply Lemma \ref{abstract lemma rel ent} to $f$ and $g$ with the following parameters 
	\begin{equation*}
		\left\{
		\begin{array}{l}
			% \displaystyle
			% \left(\,f\,,\,g\,
			% \right) \,&=\,\ds
			% \left(\,h^\eps\,,\,\tau_{w_0}\,h^\eps\,
			% \right)
			% \,,\\[0.7em]
			\displaystyle  \left(\,\delta\,,\,\lambda\,
			\right)\,=\,\ds
			\left(\,1\,,\,
			1\,/\,
			\theta^\eps
			\,
			\right)\,,\\[0.7em]
			\displaystyle  
			\mathbf{a}\,(t\,,\,w\,,\,v)
			\,=\,\ds
			e^{b\,t}\,
			A_0
			\left(
			\theta^\eps\,v\,,\,0
			\right)
			\,,\\[0.7em]
			\displaystyle  
			\mathbf{b}_1\,(t\,,\,w\,,\,v)
			\,=\,\ds
			\,B^\eps_0
			\left(
			t\,,\,\bx\,,\,\theta^\eps\,v\,,\,
			e^{-b\,t}\,w
			\right)
			\,-\,
			\frac{1}{
				\theta^\eps}
			\,\rho_0^\eps(\bx)\,v
			\,,\\[0.8em]
			\displaystyle  \left(\,\mathbf{b}_2\,,\,\mathbf{b}_3\,
			\right)\,=\,\ds
			\left(\,\tau_{w_0}\,\mathbf{b}_1\,,\,0\,
			\right)\,.
		\end{array}
		\right.\\[0,3em]
	\end{equation*}
	According to \eqref{estimee2:abstrat lemma} in Lemma \ref{abstract lemma rel ent}, it holds
	\begin{equation*}
		\left\|\,
		f(t)\,-\,
		g(t)\,
		\right\|_{L^1
			\left(
			\R^{2}
			\right)}
		\,\leq\,
		2\,\sqrt{2}\,
		\left\|\,
		f_0\,-\,
		g_0\,
		\right\|_{L^1
			\left(
			\R^{2}
			\right)}^{\frac{1}{2}}
		\,+\,
		\left(
		\frac{1
			\,-\,
			e^{-2\,b\,t}
		}{
			b
		}\,
		\right)^{\frac{1}{2}}
		\left|
		w_0
		\right|
		% \sqrt{2}\,
		% \left(\,
		% \int_0^t
		% e^{-2\,b\,t}\,\left|w_0
		% \right|^2
		% \,\dD s\,
		% \right)^{\frac{1}{2}}
		\,,
		\quad
		\forall\,t\in\R^+\,.
	\end{equation*}
	Therefore, according to assumption \eqref{hyp1 nu L 1}, we obtain the result after inverting the change of variable \eqref{change var h} and taking the supremum over all $\bx$ in $K$.\\
\end{proof}

We conclude this section providing regularity estimates for the limiting distribution $\ols{\nu}$ with respect to the adaptation variable, which solves \eqref{bar nu:eq}. 
The proof for this result is mainly computational since we have an explicit formula for the solutions to equation \eqref{bar nu:eq}.
\begin{proposition}\label{lemme:bar nu L 1}
	Consider some $\ols{\nu}_0$ satisfying assumption \eqref{hyp bar nu 0 L 1}.
	The solution $\ols{\nu}$ to equation \eqref{bar nu:eq} with initial condition 
	$
	\ds\ols{\nu}_0
	$ verifies
	\[
	\left\|\,
	\ols{\nu}(t)\,
	\right\|_
	{
		L^{\infty}
		\left(
		K\,,\,
		W^{2\,,\,1}
		\left(
		\R
		\right)
		\right)
	}
	\,\leq\,
	\exp{
		\left(
		2
		\,b\,t
		\right)}\,
	\left\|\,
	\ols{\nu}_0\,
	\right\|_
	{
		L^{\infty}
		\left(
		K\,,\,
		W^{2\,,\,1}
		\left(
		\R
		\right)
		\right)
	}\,,
	\quad\forall \,t \in\R^+\,,
	\]
	and
	\[
	\left\|\,
	w\,
	\partial_w\,
	\ols{\nu}(t)\,
	\right\|_
	{
		L^{\infty}
		\left(
		K\,,\,
		L^1
		\left(
		\R
		\right)
		\right)
	}
	\,=\,
	\left\|\,
	w\,
	\partial_w\,
	\ols{\nu}_0\,
	\right\|_
	{
		L^{\infty}
		\left(
		K\,,\,
		L^1
		\left(
		\R
		\right)
		\right)
	}\,,
	\quad\forall \,t \in\R^+\,.
	\]
\end{proposition}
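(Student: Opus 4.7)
The plan is to exploit the fact that equation \eqref{bar nu:eq} is a pure linear transport equation in $w$ (with $\bx$ a passive parameter), so it can be solved explicitly by the method of characteristics. Since $\partial_t\bar\nu - b\partial_w(w\bar\nu) = 0$ expands to $\partial_t\bar\nu - bw\,\partial_w\bar\nu - b\bar\nu = 0$, I would consider the characteristic ODE $\dot w(t) = -bw(t)$, whose flow is $w(t) = w_0\,e^{-bt}$, and observe that $\bar\nu$ satisfies $\frac{\dD}{\dD t}\bar\nu(t,\bx,w(t)) = b\,\bar\nu(t,\bx,w(t))$ along this flow. Integrating, I obtain the closed-form representation
\begin{equation*}
    \ols{\nu}(t,\bx,w) \,=\, e^{b\,t}\,\ols{\nu}_0\!\left(\bx,\, w\,e^{b\,t}\right),\qquad (t,\bx,w)\in\R^+\times K\times\R.
\end{equation*}
I would verify the formula by direct substitution, noting that the factor $e^{bt}$ in front exactly compensates the $-b\bar\nu$ in the expanded equation, while the inner factor $e^{bt}$ produces the needed $-bw\partial_w\bar\nu$.

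From this explicit formula the two stated estimates reduce to elementary changes of variable. Differentiating in $w$ gives $\partial_w^{l}\ols{\nu}(t,\bx,w) = e^{(l+1)\,b\,t}\,\partial_w^{l}\ols{\nu}_0(\bx,we^{bt})$ for $l=0,1,2$. Performing the substitution $\tilde w = w\,e^{b\,t}$ (with Jacobian $\dD w = e^{-bt}\dD\tilde w$), I would get
\begin{equation*}
    \left\|\partial_w^{l}\,\ols{\nu}(t,\bx,\cdot)\right\|_{L^1(\R)}
    \,=\, e^{l\,b\,t}\,\left\|\partial_w^{l}\,\ols{\nu}_0(\bx,\cdot)\right\|_{L^1(\R)},
\end{equation*}
for $l\in\{0,1,2\}$. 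Summing over $l\leq 2$, bounding each $e^{l\,b\,t}$ by $e^{2\,b\,t}$, and then taking the supremum in $\bx\in K$ yields the first stated bound.

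For the second estimate, I would write $w\,\partial_w\ols{\nu}(t,\bx,w) = w\,e^{2\,b\,t}\,\partial_w\ols{\nu}_0(\bx,we^{bt})$ and apply the same substitution $\tilde w = w\,e^{b\,t}$: the factor $e^{2\,b\,t}$ is cancelled exactly by $w = \tilde w\,e^{-b\,t}$ (one power) times the Jacobian $e^{-b\,t}$ (another power), so that
\begin{equation*}
    \left\|w\,\partial_w\ols{\nu}(t,\bx,\cdot)\right\|_{L^1(\R)}
    \,=\, \left\|w\,\partial_w\ols{\nu}_0(\bx,\cdot)\right\|_{L^1(\R)},
\end{equation*}
and taking the supremum in $\bx$ delivers the equality. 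There is essentially no obstacle: once the characteristic representation is in hand, everything follows from a single change of variables. The only detail to be careful about is to track the signs and Jacobian correctly, so that the growth factors in the derivatives ($e^{bt}$ and $e^{2bt}$ for $\partial_w$ and $\partial_w^2$) and the perfect cancellation for the $w\partial_w$ quantity both come out as claimed.
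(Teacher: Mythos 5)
Your proposal is correct and follows exactly the paper's approach: the paper states the same explicit solution formula $\ols{\nu}(t,\bx,w)=e^{bt}\ols{\nu}_0(\bx,e^{bt}w)$ and leaves the subsequent change-of-variable computations implicit ("we easily obtain the expected result"), which you simply spell out.
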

\begin{proof}
	Since $\ols{\nu}$ solves \eqref{bar nu:eq}, it is given by the following formula
	\[
	\ols{\nu}(t,\,\bx\,,\,w)
	\,\,=\,\,
	e^{bt}\,
	\ols{\nu}_0
	\left(\bx\,,\,e^{bt}\,w
	\right)\,
	, 
	\quad\forall \,(t,\bx)\, \in\,\R^+\times K\,.
	\]
	Consequently, we easily obtain the expected result.
\end{proof}

We are now ready to prove the first convergence result on $\nu^\eps$.
\subsection{Proof of Theorem
	\ref{th1}
}
The proof is divided in three steps. First, we prove that the solution $\nu^\eps$ to \eqref{nu:eq} converges to the local equilibrium 
\[
\cM_{\rho_0^\eps}\otimes \bar{\nu}^\eps\,,
\]
thanks to a free energy estimate. Then,
as in the example developed at the beginning of Section \ref{sec1b}, we introduce an intermediate
quantity $\ols{g}^\eps$, which converges to the
solution $\ols{\nu}$ to equation \eqref{bar nu:eq}. At last, we prove
that $\ols{g}^\eps$ is close to $\ols{\nu}^\eps$ thanks to the
equicontinuity estimate given in Proposition \ref{equicontinuity} and therefore conclude that the marginal $\ols{\nu}^\eps$ converges towards $\ols{\nu}$.

\subsubsection{Convergence of $\nu^\eps$ towards
	$\,
	\cM_{\rho_0^\eps}\otimes \bar{\nu}^\eps\,
	$:
	free energy estimate
}

\noindent In this section, we investigate the time evolution of the free energy along the trajectories of equation \eqref{nu:eq}. It is defined for all $(t,\bx) \in\R^+\times K$ as
\begin{equation*}
	E\left[\,\nu^\eps(t,\bx)\, \right]\,=\,\int_{\R^2}
	\nu^\eps(t,\bx,\bu)
	\,\ln{
		\left(
		\frac{
			\nu^\eps(t,\bx,\bu)
		}{
			\mathcal{M}_{\rho_0^\eps(\bx)}(v)
		}
		\right)
	}
	\,\dD\bu\,.
\end{equation*}
More precisely, our interest lies in its decay rate, which is given by the Fisher information 
\begin{equation*}
	I\left[\,\nu^\eps(t,\bx)\,|\,\mathcal{M}_{\rho_0^\eps(\bx)}\,\right] \,:=\, \int_{\R^2}\left|\partial_v\ln{\left(\frac{\nu^\eps(t,\bx,\bu)}
		{\mathcal{M}_{\rho_0^\eps(\bx)}(v)}\right)}\right|^2\nu^\eps(t,\bx,\bu)\,\dD\bu\,.
\end{equation*}
The reason for our interest is that the latter quantity controls the following relative entropy 
\[
H
\left[\,
\nu^\eps(t,\bx)\,|\,
\mathcal{M}_{\rho_0^\eps}(\bx)
\otimes \ols{\nu}^\eps(t,\bx)\,
\right]
\,=\,
\int_{\R^2}
\nu^\eps(t,\bx,\bu)
\,\ln{
	\left(
	\frac{
		\nu^\eps(t,\bx,\bu)
	}{
		\cM_{\rho_0^\eps}\otimes \bar{\nu}^\eps(t,\bx,\bu)
	}
	\right)
}
\,\dD\bu\,, 
\]
which itself controls the $L^1$-distance between $\nu^\eps$ and $
\cM_{\rho_0^\eps}\otimes \bar{\nu}^\eps
$. This allows to deduce the following result
\begin{proposition}
	\label{estimee rel ent nu M}
	Under assumptions \eqref{hyp1:N}-\eqref{hyp2:N} on the drift $N$ and \eqref{hyp2:psi} on the interaction kernel $\Psi$, consider a sequence of solutions $(\mu^\eps)_{\eps>0}$ to \eqref{kinetic:eq} with initial conditions satisfying assumptions \eqref{hyp:rho0}-\eqref{hyp2:f0}
	% Under the assumptions of Theorem \ref{th:preliminary}
	and \eqref{hyp2 nu L
		1}. Then for all $\eps\leq 1\,$, it holds
	\[
	\left\|\,
	\nu^\eps\,-\,
	\mathcal{M}_{\rho_0^\eps}
	\otimes
	\ols{\nu}^\eps
	\,
	\right\|_{
		L^{\infty}\left(K\,,\,L^1\left([\,0,\,t\,]\times \R^2\right)\right)}
	\,\leq\,
	\sqrt{\eps}\left(
	2\,m_2\,\sqrt{t}\,
	\,+\,
	C\,(t+1)\right)\,,
	\quad
	\forall\,
	t\,\geq\,0\,,
	\]
	where $m_2$ is given in assumption \eqref{hyp2 nu L 1}. In this result, the constant $C$ only depends on $m_*$,~$m_p$ and $\ols{m}_p$ (see assumptions \eqref{hyp:rho0}-\eqref{hyp2:f0}) and the data of the problem $N$,~$\Psi$~and~$A_0$.
\end{proposition}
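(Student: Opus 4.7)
The plan is to run a free-energy / Fisher-information estimate for equation \eqref{nu:eq} with reference measure the $v$-Gaussian $\mathcal{M}_{\rho_0^\eps}$. Differentiating
\[
E[\nu^\eps(t,\bx)] = \int_{\R^2} \nu^\eps \,\ln\!\bigl(\nu^\eps / \mathcal{M}_{\rho_0^\eps}\bigr)\, d\bu
\]
along \eqref{nu:eq} and integrating by parts, the Fokker--Planck piece $|\theta^\eps|^{-2}\mathcal{F}_{\rho_0^\eps}[\nu^\eps]$ produces the dissipation $-|\theta^\eps|^{-2} I[\nu^\eps\,|\,\mathcal{M}_{\rho_0^\eps}]$; the $w$-component $A_0(\theta^\eps v, w)$ of the drift integrates to $+b$ (using $\partial_w A_0 = -b$ and mass conservation); and the $v$-component $(\theta^\eps)^{-1} B_0^\eps$ gives a cross term which I absorb via Young's inequality into half of the Fisher information plus $\tfrac12\int |B_0^\eps|^2 \nu^\eps\,d\bu$, yielding
\[
\frac{d}{dt} E[\nu^\eps(t,\bx)] \;\leq\; -\,\frac{I[\nu^\eps\,|\,\mathcal{M}_{\rho_0^\eps}]}{2\,|\theta^\eps|^2} \;+\; \frac{1}{2}\int_{\R^2} |B_0^\eps|^2\, \nu^\eps\, d\bu \;+\; b.
\]
The source term $\int |B_0^\eps|^2 \nu^\eps\,d\bu$ is bounded uniformly in $(t,\bx,\eps)$ by reverting to the $\mu^\eps$ variables via \eqref{change:var} and using the polynomial growth \eqref{hyp2:N} on $N$, assumption \eqref{hyp2:psi} on $\Psi$, and items \eqref{estimate moment mu}--\eqref{estimate error} of Proposition \ref{th:preliminary} (in particular the exponential-plus-$\eps$ bound on $\mathcal{E}(\mu^\eps)$).

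I then integrate this differential inequality over $[0,t]$. The initial value $E[\nu^\eps_0]$ is bounded above by combining assumption \eqref{hyp2 nu L 1} with the initial moment assumption \eqref{hyp1:f0}, and $E[\nu^\eps(t)]$ is bounded below by $-C$ via the decomposition $E[\nu^\eps] = H[\nu^\eps\,|\,\mathcal{M}_{\rho_0^\eps}\!\otimes\!\ols{\nu}^\eps] + H[\ols{\nu}^\eps]$: the first term is non-negative, and the second is controlled below by a Gaussian maximum-entropy estimate applied to the uniformly bounded second $w$-moment of $\ols{\nu}^\eps$ (itself a consequence of Proposition \ref{th:preliminary}). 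This yields
\[
\int_0^t \frac{I[\nu^\eps\,|\,\mathcal{M}_{\rho_0^\eps}]}{|\theta^\eps|^2}\, ds \;\leq\; 2\,m_2^2 \,+\, C\,(1+t).
\]

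To extract the $\sqrt{\eps}$ rate I apply the conditional Gaussian log-Sobolev inequality $H[\nu^\eps\,|\,\mathcal{M}_{\rho_0^\eps}\!\otimes\!\ols{\nu}^\eps] \leq (2\rho_0^\eps)^{-1} I[\nu^\eps\,|\,\mathcal{M}_{\rho_0^\eps}]$ (LSI on the conditionals in $v$, averaged against $\ols{\nu}^\eps$), followed by Csisz\'ar--Kullback--Pinsker and a Cauchy--Schwarz in $s$ with weight $|\theta^\eps(s,\bx)|^2$:
\[
\int_0^t \bigl\|\nu^\eps \,-\, \mathcal{M}_{\rho_0^\eps}\!\otimes\!\ols{\nu}^\eps\bigr\|_{L^1(\R^2)}\, ds \;\leq\; \Bigl(\int_0^t \frac{2\,H[\nu^\eps\,|\,\mathcal{M}_{\rho_0^\eps}\!\otimes\!\ols{\nu}^\eps]}{|\theta^\eps|^2}\, ds\Bigr)^{1/2} \Bigl(\int_0^t |\theta^\eps(s,\bx)|^2\, ds\Bigr)^{1/2}.
\]
The explicit formula \eqref{expression for theta eps} gives $\int_0^t |\theta^\eps(s,\bx)|^2\, ds \leq \eps\,(t + (2 m_*)^{-1})$; combining with the previous bound and splitting $\sqrt{a+b}\leq \sqrt{a}+\sqrt{b}$ produces exactly the announced estimate $\sqrt{\eps}(2 m_2\sqrt{t} + C(t+1))$ after taking the supremum in $\bx \in K$. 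The main obstacle is the cross term in the first paragraph: the drift coefficient $(\theta^\eps)^{-1} B_0^\eps$ is singular as $\eps\to 0$, and the estimate only closes because Young's inequality places the weight $|\theta^\eps|^{-2}$ on the absorbed Fisher information, matching the Fokker--Planck dissipation exactly, while the residual $L^2(\nu^\eps)$-norm of $B_0^\eps$ stays bounded thanks to the cancellation of $N(\cV^\eps)$ built into $B_0^\eps$, the decay estimate on $\mathcal{E}(\mu^\eps)$, and the uniform moment bounds on $\mu^\eps$.
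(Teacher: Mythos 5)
Your proof is correct and follows the same overall strategy as the paper (free energy estimate, Young's inequality to absorb the $B_0^\eps$-drift into half the Fisher information, uniform moment bounds from Proposition~\ref{th:preliminary}, Gaussian log-Sobolev, then Csisz\'ar--Kullback), but it diverges from the paper's argument in the final step, and in a way that is genuinely cleaner. The paper extracts the $\sqrt{\eps}$ rate by introducing the cut-off time $T^\eps = \tfrac{\eps}{2m_*}|\ln\eps|$, on which $|\theta^\eps(s)|^{-2} \geq (2\eps)^{-1}$, treating $[0,T^\eps]$ by conservation (the $L^1$-distance is trivially bounded by $2$, and $T^\eps \lesssim \sqrt\eps$) and $[T^\eps,t]$ by an unweighted Cauchy--Schwarz in time. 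You instead apply Cauchy--Schwarz with the weight $|\theta^\eps(s)|^2$ directly on the whole interval, pairing $|\theta^\eps|^{-1}\sqrt{2H}$ against $|\theta^\eps|$, which turns the dissipation budget $\int_0^t |\theta^\eps|^{-2}H\,\dD s \leq m_2^2 + C(1+t)$ directly into the desired bound via $\int_0^t |\theta^\eps|^2\,\dD s \leq \eps\bigl(t + (2m_*)^{-1}\bigr)$. This avoids the $T^\eps$-splitting and the $\eps|\ln\eps| \lesssim \sqrt\eps$ cosmetics entirely, and in fact yields a marginally better constant in front of $m_2\sqrt{t}$ (roughly $\sqrt{2}$ rather than $2$). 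A secondary difference: to lower-bound $E[\nu^\eps(t)]$ you use the decomposition $E = H[\nu^\eps|\cM_{\rho_0^\eps}\!\otimes\!\ols{\nu}^\eps] + H[\ols{\nu}^\eps]$, dropping the non-negative first piece and bounding the marginal entropy from below by a Gaussian maximum-entropy argument from the second $w$-moment, whereas the paper runs Jensen's inequality against the reference density $\cM_1(\cW^\eps+w)$; both reduce to the same moment bounds. One small point in your favour: you state the log-Sobolev inequality with the correct $\rho_0^\eps$-dependent constant $H \leq (2\rho_0^\eps)^{-1}I$, whereas the paper's displayed form $2H \leq I$ implicitly normalizes $\rho_0^\eps\geq 1$; since the gap is absorbed into $C$ through $m_*$, this does not affect validity, but your version is the sharper one.
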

\begin{proof}
	All along this proof, we choose some 
	$\ds \bx$ lying in $K$ and we omit the dependence with respect to 
	$\ds
	\left(
	t\,,\,\bx
	\right)
	$ when the context is clear. We compute the time derivative of $\ds
	E
	\left[\,
	\nu^\eps\,
	\right]
	$ multiplying equation \eqref{nu:eq} by
	$
	\ds
	\ln{
		\left(
		\nu^\eps\,/\,
		\mathcal{M}_{\rho_0^\eps}
		\right)
	}
	$. After integrating by part the stiffer term, it yields
	\[
	\frac{\dD}{\dD t}\,
	E
	\left[\,
	\nu^\eps\,
	\right]
	\,+\,
	\frac{1}{\left|
		\theta^\eps
		\right|^2}\,
	I
	\left[\,
	\nu^\eps\,|\,
	\mathcal{M}_{\rho_0^\eps}\,
	\right]
	\,=\,
	\cA\,,
	\]
	where $\cA$ is given by
	\[
	\cA\,=\,
	-\,\int_{\R^2}
	\mathrm{div}_{\bu}
	\left[\,
	\mathbf{b}^\eps_0\,
	\nu^\eps\,
	\right]
	\,
	\ln{
		\left(
		\frac{\nu^\eps}{
			\mathcal{M}_{\rho_0^\eps}
		}
		\right)
	}\,\dD \bu\,.
	\]
	After an integration by part, $\cA$ rewrites as follows
	\[
	\cA
	\,=\,
	\frac{1}{\theta^\eps}\,
	\int_{\R^2}
	B^\eps_0
	\left(\,t\,,\,\bx\,,\,
	\theta^\eps\, v,\, w\,
	\right)
	\;
	\partial_{v}
	\left[\,
	\ln{
		\left(
		\frac{\nu^\eps}{
			\mathcal{M}_{\rho_0^\eps}
		}
		\right)
	}\,
	\right]\,
	\nu^\eps\,
	\,\dD \bu
	\,+\,b
	\,,
	\]
	where $B^\eps_0$ is given by \eqref{def:b0}. According to items
	\eqref{estimate moment mu} and \eqref{estimate error} in Proposition \ref{th:preliminary} , $\mathcal{V}^\eps$ and $\mathcal{E}(\mu^\eps)$ are uniformly bounded with respect to both $(t,\bx) \in \R^+\times K$ and $\eps\,>\,0$. Furthermore, according to assumptions \eqref{hyp2:psi} and \eqref{hyp:rho0} on $\Psi$ and $\rho_0^\eps$, $\Psi*_r\rho^\eps_0$ is uniformly bounded with respect to both $\bx \in K$ and $\eps\,>\,0$. Consequently, applying Young's inequality, assumption \eqref{hyp2:N} and since $N$ is locally Lipschitz, we obtain
	% \[
	% \cA
	% \,\leq\,
	% \frac{\eta}{\left|
		% \theta^\eps
		% \right|^2}\,
	% I
	% \left[\,
	% \nu^\eps\,|\,
	% \mathcal{M}_{\rho_0^\eps}\,
	% \right]
	% \,+\,
	% \frac{C}{\eta}
	% \left(
	% 1
	% \,+\,
	% \int_{\R^2}
	% \left(\,
	% \left|\,
	% \theta^\eps\,v\,
	% \right|^{2p}
	% \,+\,
	% w^2\,
	% \right)
	% \,
	% \nu^\eps\,
	% \,\dD \bu
	% \right),
	% \]
	\[
	\cA
	\,\leq\,
	\frac{1}{2\,\left|
		\theta^\eps
		\right|^2}\,
	I
	\left[\,
	\nu^\eps\,|\,
	\mathcal{M}_{\rho_0^\eps}\,
	\right]
	\,+\,
	C
	\left(
	1
	\,+\,
	\int_{\R^2}
	\left(\,
	\left|\,
	\theta^\eps\,v\,
	\right|^{2p}
	\,+\,
	w^2\,
	\right)
	\,
	\nu^\eps\,
	\,\dD \bu
	\right),
	\]
	% for all $\eta$ between $0$ and $1$ and
	for some positive constant $C$ only depending on $m_*$, $m_p$, $\ols{m}_p$ and the data of the problem: $N$, $A$ and $\Psi$. Then we invert the change of variable \eqref{change:var} in the integral in the right-hand side of the latter inequality and apply item \eqref{estimate moment mu} in Proposition \ref{th:preliminary}. In the end, it yields
	\[
	\cA
	\,\leq\,
	\frac{1}{2\,\left|
		\theta^\eps
		\right|^2}\,
	I
	\left[\,
	\nu^\eps\,|\,
	\mathcal{M}_{\rho_0^\eps}\,
	\right]
	\,+\,
	C\,.
	\]
	Consequently, we end up with the following differential inequality
	\[
	\frac{\dD}{\dD t}\,
	E
	\left[\,
	\nu^\eps\,
	\right]
	\,+\,
	% \frac{\alpha_*}{\left|
		% \theta^\eps
		% \right|^2}\,
	\frac{1}{2\,\left|
		\theta^\eps
		\right|^2}\,
	I
	\left[\,
	\nu^\eps\,|\,
	\mathcal{M}_{\rho_0^\eps}\,
	\right]
	\,\leq\,
	C\,,
	\]
	Then we substitute the Fisher information with the relative entropy in the latter inequality according to the Gaussian logarithmic Sobolev inequality, which reads as follows (see \cite{Fathi/Indrei/Ledoux})
	\[
	2\,
	H
	\left[\,
	\nu^\eps(t,\bx)\,|\,
	\mathcal{M}_{\rho_0^\eps}
	\otimes \ols{\nu}^\eps(t,\bx)\,
	\right]
	\,\leq\,
	I
	\left[\,
	\nu^\eps(t,\bx)\,|\,
	\mathcal{M}_{\rho_0^\eps(\bx)}\,
	\right]\,,
	\]
	and we integrate between $0$ and $t$ to get
	\[
	\int_0^t
	\frac{1}{
		\left|
		\theta^\eps(s)
		\right|^2
	}\,
	H
	\left[\,
	\nu^\eps(s,\bx)\,|\,
	\mathcal{M}_{\rho_0^\eps}
	\otimes \ols{\nu}^\eps(s,\bx)\,
	\right]
	\,
	\dD s
	\,\leq\,
	E
	\left[\,
	\nu^\eps_0(\bx)\,
	\right]
	-
	E
	\left[\,
	\nu^\eps(t,\bx)\,
	\right]
	\,+\,
	C\,t\,.
	\]
	In the latter inequality, we bound $-E
	\left[\,
	\nu^\eps(t,\bx)\,
	\right]$ thanks to the following estimate, obtained using Jensen's inequality
	\[
	-E
	\left[\,
	\nu^\eps(t,\bx)\,
	\right]
	\,\leq\,-
	\int_{\R^2}
	\nu^\eps(t,\bx,\bu)\,\ln{\left(\cM_{1}(\cW^\eps+w)\right)}\,\dD \bu
	\,.
	\]
	In the right hand side of the latter inequality, we replace $\nu^\eps$ with $\mu^\eps$ according to \eqref{def:nueps} and invert the change of variable \eqref{change:var}, this yields
	\[
	-E
	\left[\,
	\nu^\eps(t,\bx)\,
	\right]
	\,\leq\,
	\frac{1}{2}
	\int_{\R^2}
	\mu^\eps(t,\bx,\bu)\,(\ln{(2\pi)}+|w|^2)\,\dD \bu
	\,,
	\]
	which, after applying item \eqref{estimate moment mu} in Proposition \ref{th:preliminary} to estimate the latter right hand side, ensures
	\[
	\int_0^t
	\frac{1}{
		\left|
		\theta^\eps(s)
		\right|^2
	}\,
	H
	\left[\,
	\nu^\eps(s,\bx)\,|\,
	\mathcal{M}_{\rho_0^\eps}
	\otimes \ols{\nu}^\eps(s,\bx)\,
	\right]
	\,
	\dD s
	\,\leq\,
	E
	\left[\,
	\nu^\eps_0(\bx)\,
	\right]
	\,+\,
	C\,(t+1)\,.
	\]
	To estimate $E
	\left[\,
	\nu^\eps_0(\bx)\,
	\right]$ in the latter inequality, we replace $\nu^\eps_0$ with $\mu^\eps_0$ according to \eqref{def:nueps} and invert the change of variable \eqref{change:var} at time $t=0$
	\[
	E
	\left[\,
	\nu^\eps_0(\bx)\,
	\right]
	=
	H
	\left[\,
	\nu^\eps_0(\bx)\,
	\right]
	+\frac{1}{2}
	\int_{\R^2}
	\mu^\eps_0(\bx,\bu)\,(\rho_0^\eps(\bx)|\cV^\eps_0(\bx)-v|^2+\ln{(2\pi)}-\ln{(\rho_0^\eps(\bx))})\,\dD \bu\,.
	\]
	Then, we bound $H
	\left[\,
	\nu^\eps_0(\bx)\,
	\right]$, $\rho_0^\eps(\bx)$
	and moments of $\mu^\eps_0$ thanks to assumptions \eqref{hyp2 nu L 1}, \eqref{hyp:rho0} and \eqref{hyp1:f0} respectively, which yields
	\[
	\int_0^t
	\frac{1}{
		\left|
		\theta^\eps(s)
		\right|^2
	}\,
	H
	\left[\,
	\nu^\eps(s,\bx)\,|\,
	\mathcal{M}_{\rho_0^\eps}
	\otimes \ols{\nu}^\eps(s,\bx)\,
	\right]
	\,
	\dD s
	\,\leq\,
	m_2^2
	\,+\,
	C\,(t+1)\,.
	\]
	To estimate the left hand side in the latter relation, we use the explicit formula \eqref{expression for theta eps} for $\theta^\eps$, which ensures that as long as $\eps$ is less than $1$ and $s$ is greater than 
	$
	\ds
	T^\eps
	$, where $T^\eps$ is given by
	\[
	T^\eps
	\,=\,
	\frac{\eps}{2\,m_*}\,
	\left|\,
	\ln{
		\left(
		\eps
		\right)}\,
	\right|
	\,,
	\]
	we have
	\[
	\frac{1}{2\,\eps}
	\,\leq\,
	\frac{1}{\left|
		\theta^\eps(s)
		\right|^{2}}
	\,.
	\]
	Consequently, we obtain 
	\[
	\int_{T^\eps}^t
	H
	\left[\,
	\nu^\eps(s
	,\bx)\,|\,
	\mathcal{M}_{\rho_0^\eps}
	\otimes
	\ols{\nu}^\eps(s
	,\bx)
	\,
	\right]
	\,
	\dD s
	\,\leq\,
	2\,\eps\,
	m_2^2
	\,+\,
	C\,\eps\,(t\,+\,1)\,.
	\]
	Then, we substitute the relative entropy with the $L^1$-norm according to Csizár-Kullback inequality
	\[
	\left\|\,
	\nu^\eps(s,\bx)\,-\,
	\mathcal{M}_{\rho_0^\eps}
	\otimes \ols{\nu}^\eps(s,\bx)\,
	\right\|^2_{L^1
		\left(
		\R^2
		\right)
	}
	\,\leq\,
	2\,
	H
	\left[\,
	\nu^\eps(s,\bx)\,|\,
	\mathcal{M}_{\rho_0^\eps}
	\otimes \ols{\nu}^\eps(s,\bx)\,
	\right]\,,
	\]
	and take the supremum over all $\bx$ in $K$. After taking the square root, it yields
	\[
	\sup_{\bx \in K}
	\int_{T^\eps}^t
	\left\|\,
	\nu^\eps\,-\,
	\mathcal{M}_{\rho_0^\eps}
	\otimes
	\ols{\nu}^\eps
	\,
	\right\|_{L^1\left(\R^2\right)}(s,\bx)
	\,
	\dD s
	\,\leq\,
	2\,\sqrt{\eps\,t}\,
	m_2
	\,+\,
	C\,\sqrt{\eps}\,(t+1)\,,
	\quad
	\forall\,
	t\,\geq\,0\,,
	\]
	To conclude,
	we notice that since equation \eqref{nu:eq} is conservative, it holds
	\[
	\sup_{\bx \in K}
	\int^{T^\eps}_0
	\left\|\,
	\nu^\eps\,-\,
	\mathcal{M}_{\rho_0^\eps}
	\otimes
	\ols{\nu}^\eps
	\,
	\right\|_{L^1\left(\R^2\right)}(s,\bx)
	\,
	\dD s
	\,\leq\,
	2\,T^\eps
	\,\leq\,
	C\,\sqrt{\eps}\,.
	\]
	We sum the last two estimates to obtain the result.
\end{proof}

\subsubsection{Convergence of
	$\ols{g}^\eps$ towards
	$\, \ols{\nu}$}\label{sec:cv bar g eps to bar nu}
As in the example developed at the beginning of this section, we consider the following re-scaled version $g^\eps$ of $\nu^\eps$
\[
\nu^\eps
\left(
t\,,\,\bx\,,\,v\,,\,w
\right)
\,=\,
g^\eps
\left(
t\,,\,\bx\,,\,v\,,\,w\,+\,
\gamma^{\eps}(t,\bx)\,v
\right)\,,
\]
where $\gamma^\eps$ is given by
\[
\gamma^\eps
\left(t\,,\,\bx\right)
\,=\,
\frac{a\,\eps}{\rho_0^\eps
	(\bx)}\,\theta^\eps
\left(t\,,\,\bx\right)\,.
\]
Operating the following change of variable in equation \eqref{nu:eq}
\begin{equation}\label{change:var g}
	(t\,,\,v\,,\,w)\mapsto \left(t\,,\,v\,,\,
	w\,+\,
	\gamma^\eps\,v
	\right)\,,
\end{equation}
and integrating the equation with respect to $v$, the equation on the marginal 
$\displaystyle\ols{g}^{\eps}$ of 
$\displaystyle g^{\eps}$ defined as
\[
\ols{g}^{\eps}
\left(
t\,,\,\bx\,,\,w
\right)
\,=\,
\int_{\R}
g^\eps
\left(
t\,,\,\bx\,,\,v\,,\,w
\right)
\,\dD v\,,
\]
reads as follows
\begin{equation}\label{bar g eps:eq}
	\ds
	\partial_t\, \ols{g}^\eps
	\,+\,
	% \eps\,\frac{a}{\rho_0^\eps}
	\frac{a\,\eps}{\rho_0^\eps}
	\,
	\partial_{w}
	\left[\,
	\int_{\R}
	\left(
	B^\eps_0
	\left(
	t,\,\bx,\,\theta^\eps\,v,\,
	w-\gamma^\eps\,v
	\right)
	+b\,\theta^\eps\,v
	\right)
	g^\eps\,\dD v\,
	\right]
	\,-\,
	% \eps^2\,\frac{a^2}{
		% \left(
		% \rho^\eps_0
		% \right)^2
		% }
	\left(
	\frac{a\,\eps}{\rho_0^\eps}
	\right)^2
	\,
	\partial_{w}^{\,2}
	\,\ols{g}^{\eps}
	\,=\,
	\partial_{w}
	\left[\,
	b\,w\,\ols{g}^{\eps}\,
	\right]
	\,,
\end{equation}
where $\ds B^\eps_0$ is defined by \eqref{def:b0}. As in our example, the equation on $\ols{g}^\eps$ is consistent with the limiting equation \eqref{bar nu:eq} as $\eps$ vanishes, this enables to prove that $\ols{g}^\eps$ converges towards $\ols{\nu}$
\begin{proposition}\label{cv: bar g eps/bar nu}
	Under assumptions \eqref{hyp1:N}-\eqref{hyp2:N} on the drift $N$ and \eqref{hyp2:psi} on the interaction kernel $\Psi$, consider a sequence of solutions $(\mu^\eps)_{\eps\,>\,0}$ to \eqref{kinetic:eq} with initial conditions satisfying assumption \eqref{hyp:rho0}-\eqref{hyp2:f0} and the solution $\ols{\nu}$ to equation \eqref{bar nu:eq} with initial condition 
	$
	\ds\ols{\nu}_0
	$ satisfying assumption \eqref{hyp bar nu 0 L 1}. There exists a positive constant $C$ independent of $\eps$ such that for all $\eps$ less than $1$, it holds
	\[
	\left\|\,
	\ols{g}^\eps(t)
	\,-\,
	\ols{\nu}(t)
	\right\|_{L^{\infty}_{\bx}
		L^{1}_{w}}
	\,\leq\,
	2\,\sqrt{2}\,
	\left\|\,
	\ols{g}^\eps_0
	\,-\,
	\ols{\nu}_0
	\right\|^{\frac{1}{2}}_{L^{\infty}_{\bx}
		L^{1}_{w}}
	\,+\,
	C\,e^{b\,t}\,\sqrt{\eps}
	\,,\quad
	\forall\,t\in \R^+\,.
	\]
	In this result, the constant $C$ only depends on $m_*$,~$m_p$~and~$\ols{m}_p$ (see assumptions \eqref{hyp:rho0}-\eqref{hyp2:f0}) and the data of the problem $\ols{\nu}_0$, $N$, $\Psi$ and $A_0$.
\end{proposition}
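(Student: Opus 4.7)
The plan is to apply the abstract relative-entropy estimate of Lemma \ref{abstract lemma rel ent} at fixed $\bx\in K$, with $f = \ols g^\eps(\cdot,\bx,\cdot)$ and $g = \ols\nu(\cdot,\bx,\cdot)$ viewed as functions of $(t,w)\in\R^+\times\R$. I would set $d_1=0$, $d_2=1$, $\lambda = a\eps/\rho_0^\eps(\bx)$, and $\delta=0$ (the latter reflecting the absence of diffusion in the limit equation \eqref{bar nu:eq}). Since the transport drift $-b\,w$ is the same in both \eqref{bar g eps:eq} and \eqref{bar nu:eq}, the natural choice $\mathbf{b}_1 = \mathbf{b}_2 = -b\,w/\lambda$ makes the quadratic part $\tfrac14|\mathbf{b}_1-\mathbf{b}_2|^2$ of the residual vanish. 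The $O(\eps)$ remainder in \eqref{bar g eps:eq} is then absorbed into $\mathbf{b}_3$, defined by $\mathbf{b}_3\,\ols g^\eps = S^\eps$, where
\[
S^\eps(t,\bx,w) \,:=\, \int_\R \bigl(B^\eps_0(t,\bx,\theta^\eps v,\,w-\gamma^\eps v) + b\,\theta^\eps\,v\bigr)\,g^\eps(t,\bx,v,w)\,\dD v.
\]

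With these parameters, \eqref{estimee2:abstrat lemma} reduces the proof to showing $\int_0^t \mathcal R(s)\,\dD s \leq C\,\eps\,e^{2bt}$, with
\[
\mathcal R(s) \,\leq\, \lambda\int_\R \bigl|\partial_w\bigl(S^\eps\,\ols\nu/\ols g^\eps\bigr)\bigr|\,\dD w \,+\, \lambda^2\int_\R |\partial_w^2\,\ols\nu|\,\dD w.
\]
The diffusive contribution is immediately bounded by $C\,\eps^2 e^{2bs}$ thanks to Proposition \ref{lemme:bar nu L 1} and \eqref{hyp:rho0}. For the drift contribution I would Taylor-expand $B^\eps_0(t,\bx,\theta^\eps v,\,w-\gamma^\eps v)$ around $(0,w)$. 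The zeroth-order part $B^\eps_0(t,\bx,0,w) = -w - \mathcal E(\mu^\eps)$ gives $S^\eps_{\mathrm{lead}}/\ols g^\eps = -w - \mathcal E(\mu^\eps)$ exactly; integrating against $\ols\nu$ and using Proposition \ref{lemme:bar nu L 1} together with item \eqref{estimate error} of Proposition \ref{th:preliminary} bounds its contribution to $\lambda\int_\R|\partial_w(\cdot)|\,\dD w$ by $C\,\eps\,e^{bs}(1 + \|w\,\partial_w\ols\nu_0\|_{L^1_w})$. The higher-order terms in the expansion carry explicit prefactors $\theta^\eps$ or $\gamma^\eps$ (both of order $\sqrt\eps$), and their contributions reduce to ordinary integrals in $(v,w)$ of $g^\eps$ against polynomials, uniformly bounded after reverting \eqref{change:var g} and invoking item \eqref{estimate moment mu} of Proposition \ref{th:preliminary} together with the growth assumption \eqref{hyp2:N}. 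Collecting, $\mathcal R(s)\leq C\,\eps\,e^{2bs}$, so $\int_0^t\mathcal R\,\dD s \leq C\,\eps\,e^{2bt}$, and substituting into \eqref{estimee2:abstrat lemma} followed by $\sup_{\bx\in K}$ yields the claim.

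The hard part is that $\mathbf{b}_3 = S^\eps/\ols g^\eps$ formally involves division by a density with no positive lower bound. The structural cancellation that saves the argument is that the leading-order piece of $S^\eps$ is exactly $(-w - \mathcal E(\mu^\eps))\,\ols g^\eps$, so the dangerous quotient reduces to an explicit function of $w$ at leading order; only $O(\sqrt\eps)$ corrections remain, and these can be handled by rewriting them as integrals in $(v,w)$ against $g^\eps$ itself, thereby bypassing any pointwise lower bound on $\ols g^\eps$.
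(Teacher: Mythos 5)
Your setup of Lemma \ref{abstract lemma rel ent} ($d_1=0$, $d_2=1$, $\delta=0$, $\lambda=a\eps/\rho_0^\eps$) agrees with the paper, and your choice of $\mathbf{b}_2=-bw/\lambda$ does too. The divergence is in how you distribute the remainder between $\mathbf{b}_1$ and $\mathbf{b}_3$, and this is where your argument breaks down.

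You set $\mathbf{b}_1=\mathbf{b}_2$ and absorb the whole source $S^\eps$ into $\mathbf{b}_3=S^\eps/\ols g^\eps$. In the remainder $\cR$, the $\mathbf{b}_3$ piece appears as $\lambda\int_\R\bigl|\partial_w\bigl[\mathbf{b}_3\,g-\lambda\,\partial_w g\bigr]\bigr|\,\dD w$ with $g=\ols\nu$, so you must control $\int_\R\bigl|\partial_w\bigl[(S^\eps/\ols g^\eps)\,\ols\nu\bigr]\bigr|\,\dD w$. Splitting $S^\eps=(-w-\cE(\mu^\eps))\,\ols g^\eps+T^\eps$, the leading piece is indeed harmless, but for the remainder you wrote that its contribution "reduces to ordinary integrals in $(v,w)$ of $g^\eps$ against polynomials." This is not what you actually face: once the $\partial_w$ lands, you get
\[
\partial_w\!\left[\frac{T^\eps}{\ols g^\eps}\,\ols\nu\right]
\,=\,
\frac{\partial_w T^\eps}{\ols g^\eps}\,\ols\nu
\,-\,
\frac{T^\eps\,\partial_w\ols g^\eps}{(\ols g^\eps)^2}\,\ols\nu
\,+\,
\frac{T^\eps}{\ols g^\eps}\,\partial_w\ols\nu\,,
\]
where $\partial_w T^\eps=\int_\R(\cdots)\,\partial_w g^\eps\,\dD v$ and $\partial_w\ols g^\eps=\int_\R\partial_w g^\eps\,\dD v$. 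These require $L^1$ control of $\partial_w g^\eps$ (equivalently $\partial_w\nu^\eps$), which Proposition \ref{th:preliminary} does not supply (it gives moments only), and which the $L^1$ framework deliberately avoids assuming. There is also no pointwise bound on $\ols\nu/\ols g^\eps$ (only $|f/h|\leq 2$ with $h=(f+g)/2$, which is a different quotient), so the first and third terms are genuinely uncontrolled.

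The paper sidesteps this by a different split, which is exact rather than a Taylor expansion: since $B^\eps_0$ is linear in $w$, $B^\eps_0(t,\bx,\theta^\eps v,w-\gamma^\eps v)=B^\eps_0(t,\bx,\theta^\eps v,-\gamma^\eps v)-w$. The $-w$ part is put into $\mathbf{b}_3=-w$, so $\mathbf{b}_3 g=-w\ols\nu$ and the $\partial_w$ only ever hits the explicit $\ols\nu$ (controlled by Proposition \ref{lemme:bar nu L 1}). The $\eps$-small part $\int_\R\bigl(B^\eps_0(\cdot,-\gamma^\eps v)+b\theta^\eps v\bigr)\tfrac{g^\eps}{\ols g^\eps}\,\dD v$ is instead packed into $\mathbf{b}_1-\mathbf{b}_2$, which enters $\cR$ only through $\tfrac14|\mathbf{b}_1-\mathbf{b}_2|^2 f$ — no derivative, and $f=\ols g^\eps$ multiplying the square means Jensen (with $g^\eps/\ols g^\eps$ as a probability density in $v$) reduces everything to moment bounds from Proposition \ref{th:preliminary}. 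This calibration of which piece goes into $\mathbf{b}_1$ versus $\mathbf{b}_3$ is exactly what makes the argument close; your version puts the non-smooth $\eps$-correction in the slot that needs a derivative, and that is the gap.
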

\begin{proof}
	All along this proof, we choose some 
	$\ds \bx$ lying in $K$ and some positive $\eps$; we omit the dependence with respect to 
	$\ds
	\left(
	t,\bx
	\right)
	$ when the context is clear. Since $\ols{\nu}$ and $\ols{g}^\eps$ solve respectively equations \eqref{bar nu:eq} and \eqref{bar g eps:eq}, Lemma \ref{abstract lemma rel ent} applies with the following parameters 
	\begin{equation*}
		\left\{
		\begin{array}{l}
			% \displaystyle
			% \left(\,f\,,\,g\,
			% \right) \,&=\,\ds
			% \left(\,h^\eps\,,\,\tau_{w_0}\,h^\eps\,
			% \right)
			% \,,\\[0.7em]
			\displaystyle  \left(\,
			d_1\,,\,d_2\,,\,
			\delta\,,\,\lambda\,
			\right)\,=\,\ds
			\left(\,
			0\,,\,1\,,\,
			0\,,\,
			a\,\eps\,/\,\rho_0^\eps
			\,
			\right)\,,\\[0.7em]
			\displaystyle  
			\mathbf{b}_2\,(t\,,\,w)
			\,=\,\ds
			-\,
			\frac{\rho_0^\eps}{a\,\eps}
			\,b\,w
			\,,\\[0.8em]
			\displaystyle  
			\mathbf{b}_1\,(t\,,\,w)
			\,=\,\ds
			\mathbf{b}_2\,(t\,,\,w)
			\,+\,
			\int_{\R}
			\left(
			B^\eps_0
			\left(
			t,\,\bx,\,\theta^\eps\,v,\,
			-\gamma^\eps\,v
			\right)
			+b\,\theta^\eps\,v
			\right)
			\frac{g^\eps}{
				\ols{g}^\eps
			}\,\dD v\,
			\,,\\[0.8em]
			\displaystyle  \mathbf{b}_3\,(t\,,\,w)\,=\,\ds
			-\,w\,,
		\end{array}
		\right.
	\end{equation*}
	where $B^\eps_0$ is given by \eqref{def:b0}.
	According to \eqref{estimee2:abstrat lemma} in Lemma \ref{abstract lemma rel ent}, it holds
	\begin{equation*}
		\left\|\,
		\ols{g}^\eps(t)\,-\,
		\ols{\nu}(t)\,
		\right\|_{L^1
			\left(
			\R
			\right)}
		\,\leq\,
		2\,\sqrt{2}\,
		\left(
		\left\|\,
		\ols{g}^\eps_0\,-\,
		\ols{\nu}_0\,
		\right\|_{L^1
			\left(
			\R
			\right)}^{\frac{1}{2}}
		\,+\,
		\left(
		\int_0^t
		\cR_1(s)
		\,+\,
		\cR_2(s)
		\,\dD s\,
		\right)^{\frac{1}{2}}
		\right)
		\,,
		\quad
		\forall\,t\in\R^+\,,
	\end{equation*}
	where $\cR_1$ and $\cR_2$ are given by
	\begin{equation*}
		\left\{
		\begin{array}{l}
			\displaystyle  \cR_1(t)
			\,=\,
			\frac{1}{4}\int_{\R}
			\left|\,
			\int_{\R}
			\left(
			B^\eps_0
			\left(
			t\,,\,\bx\,,\,\theta^{\eps}\,v\,,\,-\,\gamma^{\eps}\,v
			\right)
			\,+\,
			b\,\theta^\eps\,v\,
			\right)
			\frac{g^\eps}{\ols{g}^\eps}\,
			\dD v\,
			\right|^2
			\ols{g}^\eps
			\dD w\,
			,\\[1.5em]
			\displaystyle  \cR_2(t) \,=\,
			\frac{a\,\eps}{\rho_0^\eps}
			\int_{\R}
			\,
			\left|\,
			\partial_w
			\left[\,
			w
			\,\ols{\nu}\,\right]\right|
			\,+\,
			\frac{a\,\eps}{\rho_0^\eps}\,
			\left|\,
			\partial^2_w\,
			\ols{\nu}\,\right|\,\dD w\,.
		\end{array}
		\right.\\[0.8em]
	\end{equation*}
	
	We estimate $\cR_1$ according to Jensen's inequality
	\[
	\cR_1(t)
	\,\leq\,
	\frac{1}{4}\int_{\R^2}
	\left|
	B^\eps_0
	\left(
	t\,,\,\bx\,,\,\theta^{\eps}\,v\,,\,-\,\gamma^{\eps}\,v
	\right)
	\,+\,
	b\,\theta^\eps\,v
	\right|^2
	g^\eps\,\dD v\,
	\dD w\,.
	\]
	Then we bound $B^\eps_0$: on the one hand $\mathcal{V}^\eps$ is uniformly bounded with respect to both $(t,\bx) \in \R^+\times K$ and $\eps\,>\,0$ according \eqref{estimate moment mu} in Proposition \ref{th:preliminary}, on the other hand according to assumptions \eqref{hyp2:psi} and \eqref{hyp:rho0} on $\Psi$ and $\rho_0^\eps$, $\Psi*_r\rho^\eps_0$ is uniformly bounded with respect to both $\bx \in K$ and $\eps\,>\,0$. Consequently, applying Young's inequality, assumption \eqref{hyp2:N} and since $N$ is locally Lipschitz, we obtain
	\[
	\cR_1(t) \,\leq\,
	C\,
	% \frac{C}{\eta}
	\int_{\R^2}
	\left(
	\left|\,\theta^\eps\,v\,
	\right|^{2p}
	\,+\,
	\left|\,\theta^\eps\,v\,
	\right|^2
	\,+\,
	\left|\,\mathcal{E}
	\left(
	\mu^\eps
	\right)\,
	\right|^2
	\right)
	g^\eps
	\dD \bu\,
	\,,
	\]
	as long as $\eps$ is less than $1$ to ensure that 
	$\gamma^\eps$ given by \eqref{change:var g} is less than 
	$\ds
	a\,\,\theta^\eps\,/\,m_*
	\,$.
	We invert the changes of variables \eqref{change:var g} and \eqref{change:var} and apply items \eqref{estimate rel energy mu} and \eqref{estimate error} in Proposition \ref{th:preliminary}, this yields
	\[
	\cR_1(t) \,\leq\,
	C\,
	\left(
	e^{-2\rho_0^\eps(\bx) t/\eps}
	+
	\eps\right)
	\,.
	\]    
	Then to estimate $\cR_2$, we apply Proposition \ref{lemme:bar nu L 1} to bound $\ols{\nu}$ and assumption \eqref{hyp:rho0} to lower bound $\rho_0^\eps$, it yields
	\[
	\cR_2(t) \,\leq\,
	C\,\eps\,e^{2\,b\,t}
	\,.
	\]
	We gather the former computations and take the supremum over all $\bx$ in $K$: it yields the expected result.
\end{proof}

\subsubsection{Convergence of
	$\ols{\nu}^\eps$ towards
	$\,
	\ols{\nu}
	$}

\noindent In this section, we gather the result from the last steps to deduce that $\ols{\nu}^\eps$ converges towards
$\,
\ols{\nu}\,
$. 

\begin{proposition}\label{cv:bar nu eps bar nu}
	Under the assumptions of Theorem \ref{th1}, there exists a positive constant $C$ independent of $\eps$ such that for all $\eps$ less than $1$, it holds
	\[
	\left\|
	\,\ols{\nu}^\eps
	\,-\,
	\ols{\nu}\,
	\right\|_{
		L^{\infty}\left(K\,,\,L^1\left([\,0,\,t\,] \times \R\right)\right)
	}
	\,\leq\,
	2\,\sqrt{2}
	\,t\,
	\left\|\,
	\ols{\nu}^\eps_0
	\,-\,
	\ols{\nu}_0\,
	\right\|^{\frac{1}{2}}_{L^{\infty}_{\bx}
		L^{1}_{w}}
	\,+\,
	2\,\sqrt{\eps\,t}\,
	m_2
	\,+\,
	C\,e^{b\,t}\,\sqrt{\eps}\,,
	\quad
	\forall\,
	t\,\in\,\R^+\,.
	\]
	In this result, the constant $C$ only depends on $m_1$, $m_*$,~$m_p$~and~$\ols{m}_p$ (see assumptions \eqref{hyp:rho0}-\eqref{hyp2:f0}) and the data of the problem $\ols{\nu}_0$, $N$, $\Psi$ and $A_0$.
\end{proposition}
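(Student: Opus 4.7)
The plan is to interpolate between $\bar\nu^\eps$ and $\bar\nu$ through the auxiliary marginal $\bar g^\eps$ introduced in Section~\ref{sec:cv bar g eps to bar nu}, by means of the triangle inequality
\begin{equation*}
\left\|\bar\nu^\eps-\bar\nu\right\|_{L^\infty(K,L^1([0,t]\times\R))}
\,\leq\,
\left\|\bar\nu^\eps-\bar g^\eps\right\|_{L^\infty(K,L^1([0,t]\times\R))}
\,+\,
\left\|\bar g^\eps-\bar\nu\right\|_{L^\infty(K,L^1([0,t]\times\R))}.
\end{equation*}
The second summand is handled by Proposition~\ref{cv: bar g eps/bar nu}, which provides a pointwise-in-time estimate: integrating it on $[0,t]$ and using $\int_0^t e^{bs}\dD s\leq e^{bt}/b$ yields a contribution of the form $2\sqrt{2}\,t\,\|\bar g^\eps_0-\bar\nu_0\|^{1/2}_{L^\infty_{\bx}L^1_{w}}+Ce^{bt}\sqrt\eps$. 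To replace the initial discrepancy $\|\bar g^\eps_0-\bar\nu_0\|^{1/2}$ by $\|\bar\nu^\eps_0-\bar\nu_0\|^{1/2}$, I would observe that $\gamma^\eps(0,\bx)=a\eps/\rho_0^\eps(\bx)$ is of order $\eps$, so assumption~\eqref{hyp1 nu L 1} applied with $\gamma=\gamma^\eps(0,\bx)$ gives $\|\bar\nu^\eps_0-\bar g^\eps_0\|_{L^\infty_{\bx}L^1_w}\leq C\eps$, and subadditivity of the square root then yields $\|\bar g^\eps_0-\bar\nu_0\|^{1/2}\leq\|\bar\nu^\eps_0-\bar\nu_0\|^{1/2}+C\sqrt\eps$.

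For the first summand I would follow the heuristic outlined at the beginning of Section~\ref{sec1b}. Since $g^\eps(v,w)=\nu^\eps(v,w-\gamma^\eps v)$, inserting the local equilibrium $\cM_{\rho_0^\eps}\otimes\bar\nu^\eps$ and using $\int_\R\cM_{\rho_0^\eps}(v)\,\dD v=1$ together with $\int_\R\nu^\eps(v,w)\,\dD v=\bar\nu^\eps(w)$ produces the decomposition
\begin{equation*}
\bar\nu^\eps(w)-\bar g^\eps(w)
\,=\,
\int_{\R}\cM_{\rho_0^\eps}(v)\bigl[\bar\nu^\eps(w)-\bar\nu^\eps(w-\gamma^\eps v)\bigr]\dD v
\,+\,
\int_{\R}\bigl[\cM_{\rho_0^\eps}(v)\bar\nu^\eps(w-\gamma^\eps v)-\nu^\eps(v,w-\gamma^\eps v)\bigr]\dD v.
\end{equation*}
Integrating in $w$ and invoking translation invariance of $\|\cdot\|_{L^1_w}$, the second integral yields exactly $\|\nu^\eps-\cM_{\rho_0^\eps}\otimes\bar\nu^\eps\|_{L^1(\R^2)}$, to which Proposition~\ref{estimee rel ent nu M} applies and delivers a $\sqrt\eps(2m_2\sqrt t+C(t+1))$ contribution once integrated over $[0,t]$ and supremised in $\bx$. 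The first integral is controlled by the equicontinuity estimate Proposition~\ref{equicontinuity}, which transfers to $\bar\nu^\eps$ simply by integrating in $v$; applied with $w_0=-\gamma^\eps v$ and combined with the uniform bound $\gamma^\eps\leq a\eps/m_*$ (immediate from \eqref{expression for theta eps} and $\theta^\eps\leq 1$) together with the Gaussian moment bounds $\int_{\R}\cM_{\rho_0^\eps}(v)\,(|v|+|v|^{1/2})\,\dD v=O(1)$, this yields an integrand bounded by $C(e^{bs}\eps+e^{bs/2}\sqrt\eps)$, whose integral over $[0,t]$ is absorbed into $Ce^{bt}\sqrt\eps$ for $\eps\leq 1$.

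The main delicate point is choosing the right decomposition of $\bar\nu^\eps-\bar g^\eps$: the shift $-\gamma^\eps v$ must sit inside the second integrand so that translation invariance of the $L^1_w$ norm recovers the full bidimensional quantity $\|\nu^\eps-\cM_{\rho_0^\eps}\otimes\bar\nu^\eps\|_{L^1(\R^2)}$, which is precisely what the free-energy estimate of Proposition~\ref{estimee rel ent nu M} controls; otherwise one is left with a quantity that is not straightforwardly bounded by the entropy/Fisher machinery. Once this decomposition is fixed, assembling the three contributions (noting that the residual polynomial-in-$t$ factors $Ct\sqrt\eps$ arising from step one and the $C(t+1)\sqrt\eps$ from Proposition~\ref{estimee rel ent nu M} are both absorbed into $Ce^{bt}\sqrt\eps$) yields exactly the announced bound, with the leading terms being $2\sqrt{2}\,t\,\|\bar\nu^\eps_0-\bar\nu_0\|^{1/2}_{L^\infty_{\bx}L^1_w}$ and $2\sqrt{\eps t}\,m_2$ as stated.
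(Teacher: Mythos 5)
Your proof is correct and follows essentially the same route as the paper: the triangle inequality through the auxiliary marginal $\ols{g}^\eps$, the control of $\|\ols{g}^\eps-\ols{\nu}\|$ via Proposition~\ref{cv: bar g eps/bar nu} together with the $O(\eps)$ bound on $\|\ols{g}^\eps_0-\ols{\nu}^\eps_0\|$ coming from assumption~\eqref{hyp1 nu L 1}, and the decomposition of $\ols{\nu}^\eps-\ols{g}^\eps$ obtained by inserting $\cM_{\rho_0^\eps}\otimes\ols{\nu}^\eps$ with the shift $-\gamma^\eps v$ placed inside the second integrand, so that it is absorbed by a translation-invariant change of variable and controlled by Proposition~\ref{estimee rel ent nu M}, while the remaining piece is handled by the equicontinuity estimate of Proposition~\ref{equicontinuity}. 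The only cosmetic difference is that you phrase the first piece of the decomposition in terms of the marginal $\ols{\nu}^\eps$, whereas the paper keeps it as a double integral in $(v,\tilde v)$ against $\nu^\eps$; after applying the integral triangle inequality and passing to the $L^1(\R^2)$ equicontinuity bound on $\nu^\eps$, these are identical.
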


\begin{proof} We choose some $\bx\in K$ and for all $t\geq 0$, we consider the following triangular inequality
	\[
	\left\|
	\,\ols{\nu}^\eps
	\,-\,
	\ols{\nu}\,
	\right\|_{
		L^{\infty}\left(K\,,\,L^1\left([\,0,\,t\,] \times \R\right)\right)}
	\,\leq\,
	\sup_{\bx\in K}
	\int_0^t
	\cA(s,\bx)\,+\cB(s,\bx)\,\dD s\,,
	\]
	where $\cA$ and $\cB$ are given by
	\begin{equation*}
		\left\{
		\begin{array}{l}
			\displaystyle  \cA(s,\bx) \,=\,\left\|
			\,\ols{\nu}^\eps(s,\bx)
			\,-\,
			\ols{g}^{\eps}(s,\bx)\,
			\right\|_{L^1(\R)
			}\,
			,\\[0.8em]
			\displaystyle  \cB(s,\bx) \,=\,
			\left\|
			\,\ols{g}^\eps(s,\bx)
			\,-\,
			\ols{\nu}(s,\bx)\,
			\right\|_{
				L^1(\R)
			}
			\,.
		\end{array}
		\right.\\[0.8em]
	\end{equation*}
	We estimate $\cB$ applying Proposition \ref{cv: bar g eps/bar nu}, which ensures 
	\[
	\cB(s,\bx)
	\,\leq\,
	2\,\sqrt{2}\,
	\left\|\,
	\ols{g}^\eps_0
	\,-\,
	\ols{\nu}_0
	\right\|^{\frac{1}{2}}_{L^{\infty}_{\bx}
		L^{1}_{w}}
	\,+\,
	C\,e^{b\,s}\,\sqrt{\eps}
	\,,\quad\forall(s,\bx)\in\R^+\times K\,.
	\]
	To bound $\left\|\,
		\ols{g}^\eps_0
		\,-\,
		\ols{\nu}_0
		\right\|_{L^{\infty}_{\bx}
			L^{1}_{w}}$, we first apply the following triangular inequality
		\[
		\left\|\,
		\ols{g}^\eps_0
		\,-\,
		\ols{\nu}_0
		\right\|_{L^{\infty}_{\bx}
			L^{1}_{w}}
		\leq
		\left\|\,
		\ols{g}^\eps_0
		\,-\,
		\ols{\nu}^\eps_0
		\right\|_{L^{\infty}_{\bx}
			L^{1}_{w}}
		+
		\left\|\,
		\ols{\nu}^\eps_0
		\,-\,
		\ols{\nu}_0
		\right\|_{L^{\infty}_{\bx}
			L^{1}_{w}}\,,
		\]
		and then estimate $\left\|\,
		\ols{g}^\eps_0
		\,-\,
		\ols{\nu}^\eps_0
		\right\|_{L^{\infty}_{\bx}
			L^{1}_{w}}$ replacing $g^\eps_0$ with $\nu^\eps_0$  in the definition of $\bar{g}^\eps_0$ according to the change of variable \eqref{change:var g}, that is
		\[
		\left\|\,
		\ols{g}^\eps_0
		\,-\,
		\ols{\nu}^\eps_0
		\right\|_{L^{\infty}_{\bx}
			L^{1}_{w}}
		\,=\,
		\sup_{\bx \in K}
		\int_{\R}
		\left|
		\int_{\R}
		\nu^\eps_0\left(\bx,v,w-\gamma^\eps_0\,v\right) - \nu^\eps_0\left(\bx,v,w\right)\dD v\,
		\right|\dD w\,.
		\]
		Applying the integral triangle inequality in the latter relation, we deduce
		\[
		\left\|\,
		\ols{g}^\eps_0
		\,-\,
		\ols{\nu}^\eps_0
		\right\|_{L^{\infty}_{\bx}
			L^{1}_{w}}
		\,\leq\,
		\sup_{\bx \in K}
		\left\|
		\,
		\nu^\eps_0
		\,-\,
		\tau_{-
			\gamma^\eps_0 v
		}\,
		\nu^\eps_0
		\,
		\right\|_{
			L^1
			\left(
			\R^2
			\right)
		}\,.
		\]
		Since $\gamma^\eps_0$ is given by \eqref{change:var g} and according to assumption \eqref{hyp:rho0} on $\rho_0^\eps$, it holds $|\gamma^\eps_0|\leq C\,\eps$. Hence, we deduce 
		\[
		\left\|\,
		\ols{g}^\eps_0
		\,-\,
		\ols{\nu}^\eps_0
		\right\|_{L^{\infty}_{\bx}
			L^{1}_{w}}
		\,\leq\,C\,\eps
		\sup_{\bx \in K}
		\frac{1}{\left|\gamma^\eps_0\right|}
		\left\|
		\,
		\nu^\eps_0
		\,-\,
		\tau_{-
			\gamma^\eps_0 v
		}\,
		\nu^\eps_0
		\,
		\right\|_{
			L^1
			\left(
			\R^2
			\right)
		}\,.
		\]
		Applying assumption \eqref{hyp1 nu L 1} to bound the right hand side in the latter estimate, we obtain
		\[
		\left\|\,
		\ols{g}^\eps_0
		\,-\,
		\ols{\nu}^\eps_0
		\right\|_{L^{\infty}_{\bx}
			L^{1}_{w}}
		\,\leq\,C\,\eps\,.
		\]
	Gathering the latter computations, we deduce
	\[
	\cB(s,\bx)
	\,\leq\,2\,\sqrt{2}
	\left\|\,
	\ols{\nu}^\eps_0
	\,-\,
	\ols{\nu}_0
	\right\|^{\frac{1}{2}}_{L^{\infty}_{\bx}
		L^{1}_{w}}
	\,+\,
	C\,e^{b\,s}\,\sqrt{\eps}
	\,.
	\]
	We integrate the latter estimate between $0$ and $t$ and take the supremum over all $\bx\in K$,  it yields
	\[
	\sup_{\bx \in K}
	\int_0^t
	\cB(s,\bx)\,\dD s
	\,\leq\,2\,\sqrt{2}\,t
	\left\|\,
	\ols{\nu}^\eps_0
	\,-\,
	\ols{\nu}_0
	\right\|^{\frac{1}{2}}_{L^{\infty}_{\bx}
		L^{1}_{w}} 
	\,+\,
	C\,e^{b\,t}\,\sqrt{\eps}
	\,.
	\]
	To estimate $\cA$, we replace $g^\eps$ with $\nu^\eps$ in the definition of $\bar{g}^\eps$ according to the change of variable \eqref{change:var g}, that is
	\[
	\cA(s,\bx)
	\,=\,
	\int_{\R}
	\left|
	\int_{\R}
	\nu^\eps\left(s,\bx,v,w\right)\,-\,
	\nu^\eps\left(s,\bx,v,w-\gamma^\eps\,v\right) \dD v
	\right|\dD w\,,
	\]
	and then consider the following decomposition
	\[
	\cA(s,\bx)
	\,\leq\,
	\cA_{1}(s,\bx)\,+\,\cA_{2}(s,\bx)\,,
	\]
	where $\cA_{1}$ and $\cA_{2}$ are defined as follows
	\begin{equation*}
		\left\{
		\begin{array}{l}
			\displaystyle  \cA_1(s,\bx) \,=\,
			\int_{\R}
			\left|
			\int_{\R^2}
			\cM_{\rho^\eps_0}(\Tilde{v})
			\left(
			\nu^\eps(s,\bx,v,w)-\nu^\eps(s,\bx, v,w-\gamma^\eps \Tilde{v})\right)\,\dD \Tilde{v}\,\dD v
			\right|\,\dD w
			,\\[1.2em]
			\displaystyle  \cA_2(s,\bx)
			\,=\,
			\int_{\R}
			\left|
			\int_{\R}
			\cM_{\rho^\eps_0}(v)\ols{\nu}^\eps(s,\bx,w-\gamma^\eps v)-
			\nu^\eps(s,\bx,v,w-\gamma^\eps v)
			\,\dD v
			\right|\,\dD w
			\,.
		\end{array}
		\right.
	\end{equation*}
	Applying the integral triangle inequality in the latter relations, we obtain
	\begin{equation*}
		\left\{
		\begin{array}{l}
			\displaystyle  \cA_1(s,\bx) \,\leq\,
			\int_{\R}
			\mathcal{M}_{\rho_0^\eps}(\Tilde{v})\,
			\left\|
			\,
			\nu^\eps
			\,-\,
			\tau_{-
				\left(
				\gamma^\eps\,\Tilde{v}
				\right)
			}\,
			\nu^\eps
			\,
			\right\|_{
				L^1
				\left(
				\R^2
				\right)
			}\,(s,\bx)\,\dD \Tilde{v}\,
			,\\[1.3em]
			\displaystyle  \cA_2(s,\bx)
			\,\leq
			\,
			\int_{\R^2}\left|
			\mathcal{M}_{\rho_0^\eps}
			\otimes
			\ols{\nu}^\eps
			\,-\,
			\nu^\eps\right|(s,\bx,v,w-\gamma^\eps v)\,\dD\bu
			\,.
		\end{array}
		\right.
	\end{equation*}
	To estimate $\cA_2$, we first perform the change of variable $w\leftarrow w-\gamma^\eps v$ in the right hand side of the latter inequality, this yields
	\[
	\cA_2(s,\bx)
	\,\leq\,
	\left\|
	\,
	\mathcal{M}_{\rho_0^\eps}
	\otimes
	\ols{\nu}^\eps(s,\bx)
	\,-\,
	\nu^\eps(s,\bx)
	\,
	\right\|_{
		L^1\left(\R^2\right)
	}
	\,.
	\]
	Integrating the latter estimate from $0$ to $t$, taking the supremum over all $\bx \in K$ and applying Proposition \ref{estimee rel ent nu M} to the right hand side, we deduce
	\[
	\int_0^t
	\cA_2(s,\bx)
	\,
	\dD s
	\,\leq\,2\,\sqrt{\eps\,t}\,
	% \sqrt{t}\,
	m_2
	\,+\,
	C\,\sqrt{\eps}\,(t+1)\,.
	\]
	To estimate $\cA_1$, we apply Proposition \ref{equicontinuity} which yields
	\[
	\cA_1(s,\bx)\,\leq\,C\sqrt{\eps}\,e^{b\,s}\,.
	\]
	After integrating the latter estimate and taking the supremum over all $\bx\in K$, we deduce
	\[
	\sup_{\bx \in K}
	\int_0^t
	\cA_1(s,\bx)\,\dD s
	\,\leq\,C\sqrt{\eps}\,e^{b\,t}
	\,.
	\]
	We obtain the result gathering the former estimates.
\end{proof}
Theorem \ref{th1} is obtained taking the sum between the estimates in Propositions \ref{estimee rel ent nu M} and \ref{cv:bar nu eps bar nu}.

\section{Convergence analysis in weighted $L^2$ spaces}\label{sec2}
In this section, we derive convergence estimates for $\mu^\eps$ in a weighted $L^2$ functional framework. 
We take advantage of the variational structure of $L^2$ spaces in order to derive uniform regularity estimates for $\mu^\eps$. These key estimates are the object of the following section
\subsection{\textit{A priori} estimates}\label{Estimates for strong convergence}
The main purpose of this section is to propagate the $\scH^{k}$-norms along the trajectories of equation \eqref{nu:eq} uniformly with respect to $\eps$. 
We outline the strategy in the case of the $\scH^0$-norm. 
Its time derivative along the trajectories of equation \eqref{nu:eq} is obtained multiplying \eqref{nu:eq} by $\ds\nu^\eps\,m ^\eps$ and integrating with respect to $\bu$
\begin{equation*}
	\frac{1}{2}
	\frac{\dD}{\dD t}\,
	\|\,\nu^\eps\,\|_{L^2\left(m ^\eps\right)}^2
	\,=\,
	\frac{1}{
		\left(
		\theta^\eps
		\right)^2
	}
	\left\langle\,
	\cF_{\rho_0^\eps}
	\left[
	\,\nu^\eps\,
	\right]\,,\,
	\nu^\eps\,
	\right\rangle_{L^2\left(m ^\eps\right)}
	\,-\,
	\left\langle\,
	\mathrm{div}_{\bu}
	\left[\,
	\mathbf{b}^\eps_0\,
	\nu^\eps\,
	\right]\,,\,
	\nu^\eps\,
	\right\rangle_{L^2\left(m ^\eps\right)}
	\,.
\end{equation*}

We first point out that according to the following lemma, the term associated to the Fokker-Planck operator is dissipative and is consequently a helping term in the upcoming analysis
\begin{lemma}
	For all $\bx$ in $K$, it holds 
	\[
	\left\langle\,
	\cF_{\rho_0^\eps(\bx)}
	\left[
	\,\nu\,
	\right]\,,\,
	\nu\,
	\right\rangle_{L^2\left( m^\eps_{\bx} \right)}
	\,\,=\,\,
	-\,
	\cD_{\rho_0^\eps(\bx)}
	\left[\,\nu\,
	\right]\,\leq\,0\,,
	\] 
	for all
	$
	\displaystyle
	\nu\, 
	\in\,H^1
	\left(
	m^\eps_{\bx}
	\right)
	$, where the dissipation 
	$\ds \mathcal{D}_{\rho_0^\eps}$
	is given by
	\begin{equation*}
		\cD_{\rho_0^\eps(\bx)}
		\left[\,\nu\,
		\right]
		\,=\,
		\int_{\R^2}\,
		\left|
		\partial_v
		\left(\,\nu\,m^\eps_{\bx}\,
		\right)
		\right|^2\,
		\left(
		m^\eps_{\bx}\right)^{-1}\,\dD \bu\,\geq\,0\,.
	\end{equation*}
\end{lemma}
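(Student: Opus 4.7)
The plan is to exploit the fact that the weight $m^\eps_{\bx}$ is precisely the Gaussian invariant measure of the Ornstein--Uhlenbeck operator hidden inside $\cF_{\rho_0^\eps(\bx)}$ in the $v$-variable. From the explicit form \eqref{def L 2 m} one immediately reads off that $\partial_v \log m^\eps_{\bx} = \rho_0^\eps(\bx)\, v$, which yields the key algebraic identity
\[
\rho_0^\eps(\bx)\, v\, \nu \,+\, \partial_v \nu \;=\; \frac{1}{m^\eps_{\bx}}\, \partial_v\!\left(\nu\, m^\eps_{\bx}\right).
\]
This is the engine of the proof: it rewrites the flux of the Fokker--Planck operator as a gradient against the reference measure $m^\eps_{\bx}$, which is exactly what is needed to make $\cF_{\rho_0^\eps}$ symmetric in $L^2(m^\eps_{\bx})$.

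With this identity in hand, the computation proceeds by pairing $\cF_{\rho_0^\eps(\bx)}[\nu]$ with $\nu$ in $L^2(m^\eps_{\bx})$, writing
\[
\left\langle \cF_{\rho_0^\eps(\bx)}[\nu],\, \nu \right\rangle_{L^2(m^\eps_{\bx})} \;=\; \int_{\R^2} \partial_v\!\left[\rho_0^\eps\, v\, \nu \,+\, \partial_v \nu\right]\, \nu\, m^\eps_{\bx}\, \dD \bu,
\]
and integrating by parts in the variable $v$ (the $w$-integral is inert, since the operator acts only in $v$). Substituting the identity above both for the flux being differentiated and for the factor $\partial_v(\nu\, m^\eps_{\bx})$ that appears after integration by parts, the integrand collapses to $-\,|\partial_v(\nu\, m^\eps_{\bx})|^2 / m^\eps_{\bx}$, which is exactly $-\cD_{\rho_0^\eps(\bx)}[\nu]$. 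Both the equality and the non-positivity then follow.

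The only technical point I anticipate is justifying the vanishing of boundary terms at $v = \pm \infty$ in the integration by parts. For an arbitrary $\nu \in H^1(m^\eps_{\bx})$ this is settled by a standard density argument: truncate $\nu$ by smooth compactly supported cutoffs in $v$, perform the integration by parts on the truncated function, and pass to the limit using the exponential Gaussian decay built into $(m^\eps_{\bx})^{-1}$ together with the Cauchy--Schwarz bound on $\partial_v(\nu\, m^\eps_{\bx})$ provided by the $H^1(m^\eps_{\bx})$-norm. I expect this to be routine rather than a real obstacle; the substantive content of the lemma lies entirely in the algebraic identity of the first step, which is also the reason the weight $m^\eps_{\bx}$ was introduced in \eqref{def L 2 m} with precisely this exponent, and which will be used again to propagate $\scH^k$-regularity in the forthcoming analysis.
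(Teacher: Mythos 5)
Your proposal is correct and follows exactly the same route as the paper: rewrite $\cF_{\rho_0^\eps(\bx)}[\nu]=\partial_v\bigl[(m^\eps_{\bx})^{-1}\partial_v(\nu\,m^\eps_{\bx})\bigr]$ using the identity $\rho_0^\eps(\bx)\,v\,\nu+\partial_v\nu=(m^\eps_{\bx})^{-1}\partial_v(\nu\,m^\eps_{\bx})$, pair with $\nu\,m^\eps_{\bx}$, and integrate by parts in $v$. The paper leaves the boundary-term justification implicit, whereas you flag it explicitly; that is the only difference.
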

\begin{proof}
	The Fokker-Planck operator rewrites as follows
	\begin{equation*}
		\cF_{\rho_0^\eps(\bx)}
		\left[\,
		\nu\,
		\right]
		\,
		=\,
		\partial_v
		\left[\,
		\left(
		m^\eps_{\bx}\right)^{-1}\,
		\partial_v
		\left(
		\nu\,
		m^\eps_{\bx}
		\right)\,
		\right].
	\end{equation*}
	Consequently, the result is obtained integrating
	$\ds
	\cF_{\rho_0^\eps(\bx)}
	\left[\,
	\nu\,
	\right]
	$
	against $\nu \,m ^\eps$ with respect to $\bu$ and then integrating by part with respect to $v$.\\
\end{proof}
Therefore, the main challenge is to control the contribution of the transport operator 
$\displaystyle
\mathrm{div}_{\bu}\,\mathbf{b}^\eps_0
$ with the dissipation $\ds\mathcal{D}_{\rho_0^\eps}$ brought by the
Fokker-Planck operator, which is done in the following
lemma.
\begin{lemma}\label{lemme technique L 2 m}
	Under assumptions \eqref{hyp1:N}-\eqref{hyp2:N} and \eqref{hyp3:N} on
	the drift $N$, \eqref{hyp2:psi} on the interaction kernel $\Psi$,
	consider a sequence of solutions $(\mu^\eps)_{\eps\,>\,0}$ to
	\eqref{kinetic:eq} with initial conditions satisfying assumptions
	\eqref{hyp:rho0}-\eqref{hyp2:f0}. Then, for any $\alpha$ greater than $\displaystyle 1/(2b\kappa)$, there exists a constant $C>0$
	such that  for all $\eps>0$, we have
	%holds the following estimate
	\[
	\left.\\[0,5em]
	-\,
	\left\langle\,
	\mathrm{div}_{\bu}
	\left[\,
	\mathbf{b}^\eps_0\,
	\nu\,
	\right]\,,\,
	\nu\,
	\right\rangle_{L^2\left( m^\eps_{\bx} \right)}
	\,\,\leq\,\,
	\frac{
		\alpha
	}{
		\left(
		\theta^\eps
		\right)^2
	}\,
	\cD_{\rho_0^\eps(\bx)}
	\left[\,\nu\,
	\right]
	\,
	+\,
	C\,
	\|\,\nu\,\|_{L^2(m^\eps_{\bx})}^2\,
	,
	\right.\\[0,3em]
	\]
	for all
	$
	\displaystyle
	(t\,,\,\bx)\, 
	\in\,\R^+
	\times K 
	$ and all
	$
	\displaystyle
	\nu\, 
	\in\,H^1
	\left(
	m^\eps_{\bx}
	\right)
	$, where $\kappa$ appears in the definition \eqref{def L 2 m} of $m^\eps_\bx$.
	%; where  $C$ is a positive constant only depending on $\alpha$,~$\kappa$,~$m_*$,~$m_p$,~$\ols{m}_p$ (see assumptions \eqref{hyp:rho0}, \eqref{hyp1:f0} and \eqref{hyp2:f0}) and the data of the problem:~$N$,~$A_0$~and~$\Psi$.
\end{lemma}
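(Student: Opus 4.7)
The plan is to integrate by parts in both $v$ and $w$ against the weight $m^\eps_\bx$, whose logarithmic derivatives read $\partial_v \log m^\eps_\bx = \rho_0^\eps v$ and $\partial_w \log m^\eps_\bx = \kappa w$. A direct computation yields
\begin{equation*}
-\langle \mathrm{div}_{\bu}[\mathbf{b}^\eps_0\,\nu],\nu\rangle_{L^2(m^\eps_\bx)} \,=\, \frac{1}{2}\int_{\R^2}\bigl[\rho_0^\eps v\, (\mathbf{b}^\eps_0)_v - \partial_v(\mathbf{b}^\eps_0)_v + \kappa w\, (\mathbf{b}^\eps_0)_w - \partial_w(\mathbf{b}^\eps_0)_w\bigr] \nu^2\, m^\eps_\bx \,\dD \bu.
\end{equation*}
Since $(\mathbf{b}^\eps_0)_w = a\theta^\eps v - bw$, the $w$-component reduces explicitly to $\frac{b}{2}\|\nu\|^2_{L^2(m^\eps_\bx)} + \frac{a\theta^\eps\kappa}{2}\int vw\,\nu^2 m^\eps_\bx - \frac{b\kappa}{2}\int w^2 \nu^2 m^\eps_\bx$; the last term is the sole dissipation available in the $w$-direction and must absorb the cross-term residual.

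For the $v$-component, I substitute $u = \cV^\eps + \theta^\eps v$ in $(\mathbf{b}^\eps_0)_v = \frac{1}{\theta^\eps} B^\eps_0(t,\bx,\theta^\eps v, w)$, which decomposes the integrand into a confining piece $\frac{\rho_0^\eps}{(\theta^\eps)^2}(u - \cV^\eps)(N(u) - N(\cV^\eps)) - N'(u) + \Psi *_r \rho_0^\eps$, a singular cross term $-\frac{\rho_0^\eps vw}{\theta^\eps}$, and a lower-order polynomial-in-$(v,w)$ remainder involving $\Psi *_r \rho_0^\eps$ and $\cE(\mu^\eps)/\theta^\eps$. The confining piece is treated via the mean value theorem, which gives $(u-\cV^\eps)(N(u) - N(\cV^\eps)) = (\theta^\eps v)^2 N'(\xi)$; invoking assumption \eqref{hyp3:N} with an appropriately chosen $C_0$ to compensate the $N'(u)$ term, and using the Hardy-type bound $\rho_0^\eps \int v^2 \nu^2 m^\eps_\bx \leq \frac{4}{\rho_0^\eps} \cD_{\rho_0^\eps(\bx)}[\nu] + 2\|\nu\|^2_{L^2(m^\eps_\bx)}$ (itself obtained by integration by parts against the Gaussian weight), the confining piece is controlled by $C \cD_{\rho_0^\eps(\bx)}[\nu] + C\|\nu\|^2_{L^2(m^\eps_\bx)}$, which fits into the target form since $\cD \leq (\theta^\eps)^{-2}\cD$ whenever $\theta^\eps \leq 1$. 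The polynomial remainder is handled by Young's inequality together with Proposition \ref{th:preliminary}; in particular, the exponential decay in item \eqref{estimate error}, combined with the explicit formula \eqref{expression for theta eps} for $\theta^\eps$, ensures that $\cE(\mu^\eps)/\theta^\eps$ is uniformly bounded in $(t, \bx, \eps)$.

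The crux of the proof is the cross term $-\frac{\rho_0^\eps}{2\theta^\eps}\int vw\,\nu^2 m^\eps_\bx$. A naive Cauchy--Schwarz estimate on it leaves a $\|\nu\|^2/(\theta^\eps)^2$ residual that cannot be absorbed into the right-hand side of the lemma. The remedy is to exploit that $\int v\, \cM_{\rho_0^\eps}(v)\,\dD v = 0$: writing $\nu = \cM_{\rho_0^\eps} h$ and integrating by parts in $v$ via $v\,\cM_{\rho_0^\eps} = -\frac{1}{\rho_0^\eps}\partial_v \cM_{\rho_0^\eps}$ transfers one derivative onto $h$ and yields the sharpened Gaussian bound
\begin{equation*}
\left|\int_{\R^2} vw\, \nu^2 m^\eps_\bx\, \dD \bu\right| \,\leq\, \frac{2}{\rho_0^\eps}\, \sqrt{\cD_{\rho_0^\eps(\bx)}[\nu] \int_{\R^2} w^2 \nu^2 m^\eps_\bx \,\dD\bu}\,.
\end{equation*}
Applying Young's inequality with parameter $\eta>0$ then produces $\frac{\rho_0^\eps}{2\theta^\eps}\left|\int vw\,\nu^2 m^\eps_\bx\right| \leq \frac{1}{4\eta}\,\frac{\cD_{\rho_0^\eps(\bx)}[\nu]}{(\theta^\eps)^2} + \eta \int w^2 \nu^2 m^\eps_\bx$. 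Choosing $\eta$ strictly below $b\kappa/2$ lets the residual $\eta \int w^2 \nu^2 m^\eps_\bx$ be absorbed into the $w$-dissipation $-\frac{b\kappa}{2}\int w^2 \nu^2 m^\eps_\bx$, and the constraint $\frac{1}{4\eta} < \alpha$ with $\eta$ approaching $b\kappa/2$ is precisely the announced threshold $\alpha > \frac{1}{2b\kappa}$. The main obstacle will be establishing the refined cross-term inequality with its sharp constant $\frac{2}{\rho_0^\eps}$ and verifying that the subleading $\cD$-contributions from the confining piece and from the lower-order terms can be made arbitrarily small relative to the dominant cross-term coefficient, so that the lemma's threshold is indeed sharp.
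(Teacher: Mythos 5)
Your opening computation and the cross-term analysis are both correct and essentially coincide with the paper's. Writing $h=\nu m^\eps_\bx$, the bound $\bigl|\int vw\,\nu^2 m^\eps_\bx\,\dD\bu\bigr|\le\frac{2}{\rho_0^\eps}\sqrt{\cD_{\rho_0^\eps(\bx)}[\nu]\int w^2\nu^2 m^\eps_\bx\,\dD\bu}$ is nothing other than the Cauchy--Schwarz bound the paper writes directly as $\bigl|\int w\,\nu\,\partial_v(\nu m^\eps_\bx)\,\dD\bu\bigr|\le\bigl(\int w^2\nu^2 m^\eps_\bx\bigr)^{1/2}\cD_{\rho_0^\eps(\bx)}[\nu]^{1/2}$; your Young split with $\eta<b\kappa/2$ and absorption into $-\tfrac{b\kappa}{2}\int w^2\nu^2 m^\eps_\bx$ is exactly how the paper derives the threshold $\alpha>1/(2b\kappa)$.

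However, the treatment of the confining $N$-piece has a genuine gap, and you half-admit this in your final sentence. The mean value theorem rewrites $\frac{\rho_0^\eps}{(\theta^\eps)^2}(u-\cV^\eps)(N(u)-N(\cV^\eps))$ as $\rho_0^\eps v^2 N'(\xi)$ with $\xi$ between $\cV^\eps$ and $u=\cV^\eps+\theta^\eps v$, but the sign of $N'(\xi)$ is not controlled in the regime that actually matters: if $\theta^\eps$ is small, then $v$ can be arbitrarily large while $u$ (hence $\xi$) stays in a bounded region where $N'$ may be positive. There the term is $\approx\rho_0^\eps v^2\cdot\sup_{\rm bdd}N'$, which your Hardy-type inequality turns into $\frac{4}{\rho_0^\eps}\cD_{\rho_0^\eps(\bx)}[\nu]+2\|\nu\|^2$. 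That is fatal for the stated threshold: the coefficient $\frac{4}{\rho_0^\eps}$ is a \emph{fixed} constant, so even after noting $\cD\le(\theta^\eps)^{-2}\cD$ (valid since $\theta^\eps\le 1$), you would end up requiring $\alpha\ge\frac{1}{4\eta}+\frac{4}{\rho_0^\eps}$ rather than $\alpha>\frac{1}{2b\kappa}$. The paper avoids this by splitting on $\{|\theta^\eps v|\le R\}$ versus $\{|\theta^\eps v|> R\}$. On the bounded region it applies the identity $\frac{1}{2}\int(\rho_0^\eps|v|^2-1)\nu^2 m^\eps_\bx\,\dD\bu=\int v\,\nu\,\partial_v(\nu m^\eps_\bx)\,\dD\bu$ but then writes $v=(\theta^\eps v)/\theta^\eps$ and applies Young with a \emph{tunable} parameter, yielding $\frac{\eta}{(\theta^\eps)^2}\cD_{\rho_0^\eps(\bx)}[\nu]+\frac{C}{\eta}\int|\theta^\eps v|^2\nu^2 m^\eps_\bx\,\dD\bu$ with $\eta$ arbitrary; on the complementary region it isolates the negative helping term $\mathcal{N}=\int\mathds{1}_{|\theta^\eps v|>R}|v'|^2\omega(v')\nu^2 m^\eps_\bx\,\dD\bu\le 0$, which both swallows the residual $\int|\theta^\eps v|^2\nu^2 m^\eps_\bx$ terms via the superlinear decay \eqref{hyp1:N} and, combined with \eqref{hyp3:N}, controls the $-N'(u)$ contribution. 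Without the indicator split and the explicit helping term $\mathcal{N}$, the MVT argument alone cannot reach the sharp threshold.

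Two smaller remarks. First, your claim that $\cE(\mu^\eps)/\theta^\eps$ is \emph{uniformly} bounded in $(t,\bx,\eps)$ is doubtful when $\rho_0^\eps>2m_*$, since $\cE$ decays like $e^{-2m_* t/\eps}+\eps$ whereas $\theta^\eps$ can decay like $e^{-\rho_0^\eps t/\eps}$ in the transient regime; fortunately this bound is not needed --- the paper uses only that $\cE$ itself is bounded and absorbs the $1/\theta^\eps$ factor into the $(\theta^\eps)^{-2}\cD$ Young term exactly as for the $\Psi$-term. Second, the paper keeps track of the $a\kappa\theta^\eps\int vw\,\nu^2 m^\eps_\bx$ cross term from $A_0$ too (via a second Young parameter $\eta_1$), which also feeds a small amount into the $w^2$-budget; your outline parks it in the "$w$-component" without closing it.
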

Before getting into the heart of the proof, we point out that as long as the latter lemma holds with some $\alpha$ less than $1$, the sum of the estimates in the two latter Lemmas yield
\begin{equation*}
	\frac{1}{2}
	\frac{\dD}{\dD t}\,
	\|\,\nu^\eps\,\|_{L^2\left(m ^\eps\right)}^2
	\,\leq\,
	C\,
	\|\,\nu^\eps\,\|_{L^2\left(m ^\eps\right)}^2\,,
\end{equation*}
which ensures that the $\scH^0$-norm is propagated along the curves of \eqref{nu:eq} uniformly with respect to $\eps$. We follow the exact same strategy in order to propagate the $\scH^k$-norms when $k$ is not $0\,$: see Proposition \ref{estime L 2 m g eps} for more details. Moreover, we emphasize that the constraint \eqref{condition kappa} on $\kappa$ in Theorem \ref{th:2} arises from the lower bound on $\alpha$ in Lemma \ref{lemme technique L 2 m}.
\begin{remark}
	Due to the structure of the space $L^2\left(m ^\eps\right)$, we added the confining assumption \eqref{hyp3:N} on the drift $N$ to Theorem \ref{th:2}. Our proof of Lemma \ref{lemme technique L 2 m} crucially relies on this assumption ; it is the only time that we use it as well.
\end{remark}
\begin{proof}
	All along this proof, we consider some $\eps\,>\,0$ and some $\displaystyle(t,\,\bx)$ in $\R_+\times K$; we omit the dependence with respect to $\bx$ when the context is clear. Furthermore, we choose some $\nu$ in $H^1
	\left(
	m^\eps_{\bx}
	\right)
	$. Since $\mathbf{b^\eps_0}$ is given by \eqref{def:b0}, we have
	\begin{equation*}
		-\,
		\left\langle\,
		\mathrm{div}_{\bu}
		\left[\,
		\mathbf{b}^\eps_0\,
		\nu\,
		\right]\,,\,
		\nu\,
		\right\rangle_{L^2\left(m ^\eps\right)}
		\,=\,
		\mathcal{A}_1 + \mathcal{A}_2 + \mathcal{A}_3\,,
	\end{equation*}
	where
	\begin{equation*}
		\left\{
		\begin{array}{l}
			\displaystyle  \cA_{1} \,=\,
			\frac{1}{\theta^\eps}
			\int_{\R^2}
			\partial_v \left[\,
			w\,\nu\,
			\right]\,
			\nu\,m ^\eps\,\dD \bu 
			\,-\,\int_{\R^2}
			\partial_w
			\left[\,
			A_0
			\left(\theta^\eps v,\,w
			\right)\nu\,
			\right]\,\nu\,m ^\eps\, \dD \bu\,
			,\\[1.1em]
			\displaystyle \cA_{2}
			\,=\,
			-\,
			\frac{1}{\theta^\eps}
			\int_{\R^2}
			\partial_v
			\left[\,
			\left(
			N(\mathcal{V}^\eps
			+
			\theta^\eps v)
			\,-\,
			N(\mathcal{V}^\eps)
			\right)
			\nu\,\right]\,
			\nu\,m ^\eps\, \dD \bu\,,\\[1.1em]
			\displaystyle \cA_{3} \,=\,
			\int_{\R^2}
			\partial_v
			\left[\,
			\left(
			v\,\Psi*_r\rho^\eps_0
			\,+\,
			\mathcal{E}(\mu^\eps)
			\left(\theta^\eps\right)^{-1}
			\right)
			\nu\,\right]\,\nu\,
			m ^\eps\,\dD \bu\,
			.
		\end{array}
		\right.\\[1em]
	\end{equation*}
	To estimate $\cA_1$, we take advantage of the confining properties of
	$A_0$. When it comes to $\cA_2$, the estimate relies on the confining
	properties of the non-linear drift $N$. The last term  $\cA_3$ gathers the lower order terms and adds no difficulty.
	
	We start with $\cA_{1}$, which
	rewrites as follows after exact computations and an integration by part,
	\[
	\cA_{1}
	\,=\,-
	\int_{\R^2}
	w\,\nu \left(m ^\eps\right)^{1/2}\,
	\frac{1}{\theta^\eps}\,
	\partial_v \left[\,
	\nu\,m ^\eps\,
	\right]
	\left(
	m ^\eps
	\right)^{-1/2}\dD \bu\,+\,
	\frac{1}{2}\,\int_{\R^2}
	\left(
	\kappa w
	A_0
	\left(\theta^\eps v,w
	\right)
	-
	\partial_w\,A_0
	\right)
	\left|
	\nu
	\right|^2 m ^\eps\, \dD \bu
	\,.
	\]
	According to the definition of $A_0$ and applying Young's inequality, we obtain
	\[
	\cA_{1}
	\,\leq\,
	\frac{1}{
		2\,\eta_2
		\left(
		\theta^\eps
		\right)^2
	}\,
	\cD_{\rho_0^\eps}
	\left[\,\nu\,
	\right]
	+
	\int_{\R^2}
	\left(
	\frac{C\kappa^2}{\eta_1}
	\left|
	\theta^\eps v
	\right|^2
	+
	\left(
	C\eta_1
	+
	\frac{\eta_2-b\kappa}{2}
	\right)
	w^2
	\right)
	\left|
	\nu
	\right|^2m ^\eps\, \dD \bu
	\,
	+\,
	C
	\|
	\nu
	\|^2_{L^2\left(m ^\eps\right)}\,
	,
	\]
	for all positive
	$
	\displaystyle 
	\eta_1
	$
	and 
	$
	\displaystyle 
	\eta_2
	$
	and for some positive constant $C$.
	We set 
	$
	\displaystyle
	\alpha_{-}
	\,=\,
	(\alpha\,+\,1/(2b\kappa))/2
	$, 
	$\displaystyle
	\eta_2 = 
	1/(2\alpha_{-})
	$ and 
	$
	\displaystyle
	\eta_1\,
	=\,
	\left(
	b\kappa
	\,-\,
	\eta_2
	\right)/(2C)
	$ which is positive according to the condition on $\alpha$ in Lemma \ref{lemme technique L 2 m}. With this choice, we have $\ds C \eta_1
	+(
	\eta_2
	\,-\,
	b\kappa)/2\,=\,0$ and consequently, we obtain
	\begin{equation}\label{A 1}
		\cA_1
		\, \leq \,
		\frac{\alpha_{-}}{
			\left(
			\theta^\eps
			\right)^2
		}\,
		\cD_{\rho_0^\eps}
		\left[\,\nu\,
		\right]
		\,+\,C
		\int_{\R^2}
		\left|
		\theta^\eps v
		\right|^2
		\left|
		\nu
		\right|^2\,m ^\eps\, \dD \bu
		\,
		+\,
		C
		\|
		\nu
		\|^2_{L^2\left(m ^\eps\right)}\,,
	\end{equation}
	for some positive constant $C$ only depending on $\displaystyle A_0$, $\kappa$ and $\displaystyle \alpha$. 

	To estimate $\cA_2$, we take advantage of the super-linear decay of $N$ (see assumption \eqref{hyp1:N}) in order to control the terms growing at most linearly. We emphasize that the decaying property of $N$ is prescribed at infinity. Consequently, it may not have confining property on bounded sets. Hence, the main point here consists in isolating the domain where $N$ decays super linearly.\\
	After some exact computations and an integration by part, $\cA_2$ rewrites
	\[
	\cA_{2}
	\,=\,
	\frac{1}{2}\,\int_{\R^2}
	\left[\,
	\frac{\rho_0^\eps}{\theta^\eps}
	\,
	v\,
	\left(
	N(\mathcal{V}^\eps
	+
	\theta^\eps v)
	\,-\,
	N(\mathcal{V}^\eps)
	\right)
	\,-\,
	N'
	\left(
	\mathcal{V}^\eps
	+
	\theta^\eps v
	\right)\,
	\right]
	\,\left|
	\nu
	\right|^2\,m ^\eps\, \dD \bu\,.
	\]
	We consider some $R>0$ and split the former expression in three different parts
	\[
	\cA_{2}
	\,=\,
	\cA_{21}
	\,+\,
	\cA_{22}
	\,+\,
	\cA_{23}\,,
	% \,+\,
	% \cA_{24},
	\]
	where
	\begin{equation*}
		\left\{
		\begin{array}{l}
			\displaystyle \cA_{21} \,=\,
			\frac{\rho_0^\eps}{2\theta^\eps}\,\int_{\R^2}
			\mathds{1}_{|\theta^\eps v
				| \,>\, R}\,
			v\,
			\left(
			N(\mathcal{V}^\eps
			+
			\theta^\eps v)
			\,-\,
			N(\mathcal{V}^\eps)
			\right)
			\,\left|
			\nu
			\right|^2\,m ^\eps\, \dD \bu\,
			,\\[1.5em]
			\displaystyle  \cA_{22} \,=\,
			\frac{1}{2}\,\int_{\R^2}
			\mathds{1}_{|\theta^\eps v
				| \,\leq\, R}
			\left[\,
			\frac{\rho_0^\eps}{\theta^\eps}\,v
			\left(
			N(\mathcal{V}^\eps
			+
			\theta^\eps v)
			\,-\,
			N(\mathcal{V}^\eps)
			\right)
			\,-\,
			N'
			\left(
			\mathcal{V}^\eps
			+
			\theta^\eps v
			\right)
			\right]
			\,\left|
			\nu
			\right|^2\,m ^\eps\, \dD \bu\,
			,\\[1.5em]
			%     \displaystyle  \cA_{22} \,=\,
			% \frac{\rho_0^\eps}{2\theta^\eps}\,\int_{\R^2}
			% \mathds{1}_{|\theta^\eps v
				% | \,\leq\, R}\,
			% v\,
			% \left(
			%     N(\mathcal{V}^\eps
			%     +
			%     \theta^\eps v)
			%     \,-\,
			%     N(\mathcal{V}^\eps)
			%     \right)
			%     \,\left|
			%     \nu
			%     \right|^2\,m ^\eps\, \dD \bu
			%     ,\\[1.1em]
			%     \displaystyle \cA_{23}
			%     \,=\,-
			%     \frac{1}{2}\,\int_{\R^2}
			% \mathds{1}_{|\theta^\eps v
				% | \,\leq\, R}\,
			%     N'
			%     \left(
			%     \mathcal{V}^\eps
			%     +
			%     \theta^\eps v
			%     \right)
			%     \,\left|
			%     \nu
			%     \right|^2\,m ^\eps\, \dD \bu
			%     ,\\[1.1em]
			\displaystyle
			\cA_{23}
			\,=\,-
			\frac{1}{2}\,\int_{\R^2}
			\mathds{1}_{|\theta^\eps v
				| > R}\,
			N'
			\left(
			\mathcal{V}^\eps
			+
			\theta^\eps v
			\right)
			\,\left|
			\nu
			\right|^2\,m ^\eps\, \dD \bu
			\,.\\[0,8em]
		\end{array}
		\right.
	\end{equation*}
	The first term $\cA_{21}$ corresponds to the the contribution of $N$ on the domain where it decays super-linearly. Consequently, $\cA_{21}$ is non positive for $R$ great enough. We take advantage of the helping term $\cA_{21}$ to control $\cA_{22}$, which corresponds to the contribution of $N$ on bounded sets. We estimate $\cA_3$ taking advantage of the confining term $\cA_{21}$ coupled with the confining assumption \eqref{hyp3:N} on $N$.\\
	
	Let us estimate $\cA_{21}$. According to item \eqref{estimate moment mu} in Proposition \ref{th:preliminary}, $\mathcal{V}^\eps$ is uniformly bounded with respect to both
	$
	\left(
	t,\,\bx
	\right)
	\in
	\R^+
	\times 
	K
	$ and $\eps$. Therefore, since $N$ is continuous, we bound $N(\cV^\eps)$ by a constant in the following expression 
	\[
	\mathds{1}_{|\theta^\eps v
		| >R}\,
	\theta^\eps v\,
	\left(
	N(\mathcal{V}^\eps
	+
	\theta^\eps v)
	-
	N(\mathcal{V}^\eps)
	\right)
	\,\leq\,
	\mathds{1}_{|\theta^\eps v
		| > R}
	\left(
	\left|
	\mathcal{V}^\eps
	+
	\theta^\eps v
	\right|^2
	\omega(\mathcal{V}^\eps
	+
	\theta^\eps v)\,
	\frac{\theta^\eps v}{\mathcal{V}^\eps
		+
		\theta^\eps v}
	+
	C|\theta^\eps v|\,
	\right),
	\]
	where $\omega$ is given in assumption \eqref{hyp1:N} and where the constant $C$ depends on both $N$ and the uniform upper bound on $\mathcal{V}^\eps$. Since $\cV^\eps$ is uniformly bounded, there exists $R$ large enough such that 
	\[
	\frac{1}{2}\;\leq\;
	\frac{\theta^\eps v}{\mathcal{V}^\eps
		+
		\theta^\eps v}\,,
	\]
	for all $|\theta^\eps v|>R$. Furthermore, since $\cV^\eps$ is uniformly bounded and according to assumption \eqref{hyp1:N}, there exists $R$ large enough such that 
	\[
	\mathds{1}_{|\theta^\eps v
		| \,>\, R}\,
	\left(
	\frac{1}{2}
	\left|
	\mathcal{V}^\eps
	+
	\theta^\eps v
	\right|^2
	\omega(\mathcal{V}^\eps
	+
	\theta^\eps v)\
	\,+\,
	C\,|\theta^\eps v|\,
	\right)\,\leq\,0\,.
	\]
	From now on, \textbf{we fix} $R$ such that the latter two relations hold true. For such $R$, it holds
	\[
	\cA_{21} \,\leq\,
	\int_{\R^2}
	\left(
	\frac{m_*}{4}\,
	\mathds{1}_{|\theta^\eps v
		| \,>\, R}\,
	\left|
	\mathcal{V}^\eps
	+
	\theta^\eps v
	\right|^2
	\omega(\mathcal{V}^\eps
	+
	\theta^\eps v)
	\,+\,
	C\,|\theta^\eps v|\,
	\right)\,\left|
	\nu
	\right|^2\,m ^\eps\, \dD \bu\,,
	\]
	where we used that $\theta^\eps \leq 1$ and where $m_*$ is the lower bound of $\rho_0^\eps$ given by assumption \eqref{hyp:rho0}. We note that the radius $R$ depends only on $N$ and the uniform bound on  $\displaystyle|\mathcal{V}^\eps|$.  Furthermore, we introduce the following notation
	\[
	\mathcal{N}
	\,=\,
	\int_{\R^2}
	\mathds{1}_{|\theta^\eps v
		| \,>\, R}\,
	\left|
	\mathcal{V}^\eps
	+
	\theta^\eps v
	\right|^2
	\omega(\mathcal{V}^\eps
	+
	\theta^\eps v)
	\,\left|
	\nu
	\right|^2\,m ^\eps\, \dD \bu
	\,\leq\,
	0\quad
	\text{when}\quad
	R\,\gg\,1
	\,.
	\]
	The term $\mathcal{N}$ corresponds to the contribution of $N$ on the domain where it has super-linear decaying properties and according to assumption \eqref{hyp1:N}, it is non positive for $R$ sufficiently large. Therefore, we use $\mathcal{N}$ to control the contribution of the other terms.
	With this notation, the estimate on $\cA_{21}$ rewrites
	\[
	\cA_{21} \,\leq\,
	C\int_{\R^2}
	|\theta^\eps v|\,\left|
	\nu
	\right|^2\,m ^\eps\, \dD \bu
	\,+\,
	\frac{m_*}{4}\,
	\mathcal{N}\,
	,
	\]
	where $C$ and $R$ only depend on $N$ and the uniform bound on  $\displaystyle|\mathcal{V}^\eps|$.\\
	
	We turn to $\cA_{22}$. Since $N$ has $\scC^1$ regularity and relying item \eqref{estimate moment mu} in Proposition \ref{th:preliminary} , which ensures that $\mathcal{V}^\eps$ stays uniformly bounded, we obtain
	\[
	\cA_{22} \,\leq\,C
	\,\frac{\rho_0^\eps}{2}
	\,
	\int_{\R^2}
	|v|^2
	\,\left|
	\nu
	\right|^2\,m ^\eps\, \dD \bu
	\,+\,
	C
	\|
	\nu
	\|^2_{L^2\left(m ^\eps\right)}
	\,,
	\]
	where $C$ is a positive constant which may depend on $m_*$, $N$ and the uniform bound on $|\mathcal{V}^\eps|$.
	We estimate the quadratic term in $v$ in the latter inequality thanks to the following relation 
	\[
	\frac{1}{2}
	\int_{\R^2}
	\left(
	\rho_0^\eps\,
	|v|^2
	\,-\,
	1
	\right)
	\,\left|
	\nu
	\right|^2\,m ^\eps\, \dD \bu
	\,=\,
	\int_{\R^2}
	v\,\nu\,
	\partial_v
	\left(
	\nu\,m ^\eps
	\right)\, \dD \bu\,,
	\]
	which is obtained after exact computations and an integration by part in the right hand side of the latter equality. We apply Young's inequality to the former relation and in the end it yields
	\begin{equation*}
		\cA_{22}
		\, \leq \,
		\frac{\eta}{
			\left(
			\theta^\eps
			\right)^2
		}\,
		\cD_{\rho_0^\eps}
		\left[\,\nu\,
		\right]
		\,+\,
		C
		\left(
		\frac{1}{\eta}
		\int_{\R^2}
		\left|
		\theta^\eps v
		\right|^2
		\left|
		\nu
		\right|^2\,m ^\eps\, \dD \bu
		\,
		+\,
		\|
		\nu
		\|^2_{L^2\left(m ^\eps\right)}
		\right)\,,
	\end{equation*}
	for all positive $\eta$ and for some positive constant $C$ depending on $m_*$, $N$ and the uniform bound on $|\mathcal{V}^\eps|$.

	We estimate the last term $\cA_{23}$ taking advantage of the confining properties corresponding to $\mathcal{N}$. Indeed we have
	\begin{equation*}
		\cA_{23}
		\,+\,
		\frac{m_*}{8}\,
		\mathcal{N}
		\,=\, 
		\int_{\R^2}
		\mathds{1}_{|\theta^\eps v
			| > R}
		\left(
		\frac{m_*}{8}\,
		\left|
		v'
		\right|^2
		\omega(v')
		-
		\frac{1}{2}
		N'
		\left(v'
		\right)
		\right)\left|
		\nu
		\right|^2\,m ^\eps\, \dD \bu\,,
	\end{equation*}
	where we used the shorthand notation 
	$\displaystyle
	v' \,=\,
	\mathcal{V}^\eps
	+
	\theta^\eps v$.
	Hence, according to assumption \eqref{hyp3:N}, we deduce
	\begin{equation*}
		\cA_{23}
		\,+\,
		\frac{m_*}{8}\,
		\mathcal{N}
		\,\leq\, 
		C\|
		\nu
		\|_{L^2\left(m ^\eps\right)}^2\,,
	\end{equation*}
	for some positive constant $C$ depending on $m_*$ and $N$. Gathering these computations, we obtain
	\begin{equation}\label{A 2}
		\cA_{2}
		\,\leq\, 
		\frac{\eta}{
			\left(
			\theta^\eps
			\right)^2
		}\,
		\cD_{\rho_0^\eps}
		\left[\,\nu\,
		\right]
		\,+\,
		C\left(
		\frac{1}{\eta}
		+1\right)
		\int_{\R^2}
		\left|\theta^\eps v\right|^2
		\left|
		\nu
		\right|^2\,m ^\eps\, \dD \bu
		\,+\,
		C\|
		\nu
		\|_{L^2\left(m ^\eps\right)}^2
		\,+\,
		\frac{m_*}{8}\,
		\mathcal{N}\,,
	\end{equation}
	for all $\eta\,>\,0$ and
	where $C$ is a positive constant which may depend on $m_*$, $R$, $N$ and the uniform bound on $|\mathcal{V}^\eps|$.\\
	
	We turn to $\cA_3$, which gathers terms of lower-order. We integrate by part and apply Young's inequality. It yields
	\begin{equation*}
		\cA_{3} \,\leq\,
		\frac{\eta}{
			\left(
			\theta^\eps
			\right)^2
		}\,
		\cD_{\rho_0^\eps}
		\left[\,\nu\,
		\right]
		\,+\,
		\frac{
			1
		}{2\eta}\,
		\left(
		\left|
		\Psi*_r\rho^\eps_0
		\right|^2\,
		\int_{\R^2}
		\left|
		\theta^\eps v
		\right|^2
		\left|
		\nu
		\right|^2\,m ^\eps\, \dD \bu
		\,
		+\,
		\left|
		\mathcal{E}(\mu^\eps)
		\right|^2\,
		\|
		\nu
		\|^2_{L^2\left(m ^\eps\right)}
		\right),
	\end{equation*}
	for all positive $\eta$.
	According to item \eqref{estimate error} in Proposition \ref{th:preliminary} , $\mathcal{E}(\mu^\eps)$ is uniformly bounded with respect to both $(t,\bx) \in \R^+\times K$ and $\eps\,>\,0$. Furthermore, according to assumptions \eqref{hyp2:psi} and \eqref{hyp:rho0} on $\Psi$ and $\rho_0^\eps$, $\Psi*_r\rho^\eps_0$ is uniformly bounded with respect to both $\bx \in K$ and $\eps\,>\,0$. Consequently, we obtain
	\begin{equation}\label{A 3}
		\cA_{3} \,\leq\,
		\frac{\eta}{
			\left(
			\theta^\eps
			\right)^2
		}\,
		\cD_{\rho_0^\eps}
		\left[\,\nu\,
		\right]
		\,+\,
		\frac{
			C
		}{\eta}\,
		\left(
		\int_{\R^2}
		\left|
		\theta^\eps v
		\right|^2
		\left|
		\nu
		\right|^2\,m ^\eps\, \dD \bu
		\,
		+\,
		\|
		\nu
		\|^2_{L^2\left(m ^\eps\right)}
		\right),
	\end{equation}
	for some positive constant $C$ which may depend on $m_*$ (see assumption \eqref{hyp:rho0}), $m_p$ and $\ols{m}_p$ (see assumptions \eqref{hyp1:f0} and \eqref{hyp2:f0}) and the data of our problem $N$, $\Psi$ and $A_0$.\\
	
	Gathering estimates \eqref{A 1}, \eqref{A 2} and \eqref{A 3}, it yields
	\begin{equation*}
		-
		\left\langle\,
		\mathrm{div}_{\bu}
		\left[\,
		\mathbf{b}^\eps_0\,
		\nu\,
		\right]\,,\,
		\nu\,
		\right\rangle
		\,\leq\,
		\frac{
			\alpha_{-}+2\eta
		}{
			\left(
			\theta^\eps
			\right)^2
		}\,
		\cD_{\rho_0^\eps}
		\left[\,\nu\,
		\right]
		+\,
		C
		\left(
		1
		+
		\frac{
			1
		}{\eta}
		\right)
		\int_{\R^2}
		\left(
		\left|
		\theta^\eps v
		\right|^2
		+1
		\right)
		\left|
		\nu
		\right|^2\,m ^\eps\, \dD \bu
		\,+\,
		\frac{m_*}{8}\,
		\mathcal{N}\,
		,
	\end{equation*}
	for all positive $\eta$. Hence, we choose $2\eta \,=\,
	\alpha-\alpha_{-}$. Therefore, replacing $\mathcal{N}$ by its definition, the former estimate rewrites
	\begin{equation*}
		-
		\left\langle\,
		\mathrm{div}_{\bu}
		\left[\,
		\mathbf{b}^\eps_0\,
		\nu\,
		\right],\,
		\nu\,
		\right\rangle
		\,\leq\,
		\frac{
			\alpha
		}{
			\left(
			\theta^\eps
			\right)^2
		}\,
		\cD_{\rho_0^\eps}
		\left[\,\nu\,
		\right]
		+
		\int_{\R^2}
		\left(
		C\left(
		\left|
		\theta^\eps v
		\right|^2
		+ 1
		\right)
		+
		\mathds{1}_{|\theta^\eps v
			| > R}\,
		\frac{m_*}{8}\,
		\left|
		v'
		\right|^2
		\omega(v')
		\right)
		\left|
		\nu
		\right|^2 m ^\eps \dD \bu\,,
	\end{equation*}
	where we used the shorthand notation 
	$\displaystyle
	v' \,=\,
	\mathcal{V}^\eps
	+
	\theta^\eps v$.
	To conclude, we estimate the right-hand side in the latter inequality applying assumption \eqref{hyp1:N} on $N$. Since $\mathcal{V}^\eps$ is uniformly bounded, we obtain
	\begin{equation*}
		-
		\left\langle\,
		\mathrm{div}_{\bu}
		\left[\,
		\mathbf{b}^\eps_0\,
		\nu\,
		\right]\,,\,
		\nu\,
		\right\rangle_{L^2\left(m ^\eps\right)}
		\,\leq\,
		\frac{
			\alpha
		}{
			\left(
			\theta^\eps
			\right)^2
		}\,
		\cD_{\rho_0^\eps}
		\left[\,\nu\,
		\right]
		\,+\,
		C
		\|\nu\|_{L^2\left(m ^\eps\right)}^2\,
		,
	\end{equation*}
	for some constant $C$ only depending on $\alpha$, $\kappa$, $m_*$, $m_p$, $\ols{m}_p$ and the data of the problem: $N$,~$A_0$~and~$\Psi$.
\end{proof}
We also mention the following general result, which may be interpreted as a Poincar\'e inequality in the functional space $L^2
\left(
m^\eps
\right)$
\begin{lemma}\label{P ineq}
	For all $\bx \in K$ and all function $\nu$ in $\ds H^1_{w}
	\left(
	m^\eps_{\bx}
	\right)$, hold the following estimates
	\begin{equation*}
		\left\|\,
		\nu\,
		\right\|_{
			L^2
			\left(
			m^\eps_{\bx}
			\right)
		}
		\,\leq\,
		\frac{1}{\sqrt{\kappa}}
		\left\|
		\,\partial_{w}\,
		\nu\,
		\right\|_{
			L^2
			\left(
			m^\eps_{\bx}
			\right)
		}\quad
		\mathrm{and}\quad\;
		\left\|\,
		w\,
		\nu\,
		\right\|_{
			L^2
			\left(
			m ^\eps_{\bx}
			\right)
		}
		\,\leq\,
		\frac{2}{\kappa}
		\left\|
		\,\partial_{w}\,
		\nu\,
		\right\|_{
			L^2
			\left(
			m ^\eps_{\bx}
			\right)
		}\,.
	\end{equation*}
\end{lemma}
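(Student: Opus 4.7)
Since the weight $m^\eps_{\bx}$ depends on $w$ only through the factor $\exp(\kappa|w|^2/2)$, the key identity is $\partial_w m^\eps_{\bx} = \kappa w \, m^\eps_{\bx}$, whence
\[
\partial_w\!\left(w\, m^\eps_{\bx}\right)
\,=\,\left(1+\kappa w^2\right) m^\eps_{\bx}.
\]
I would start from this identity and integrate by parts in $w$ against $\nu^2$ (boundary terms vanish since $\nu\in H^1_w(m^\eps_{\bx})$ forces $\nu^2 w\, m^\eps_{\bx}\to 0$ at $w=\pm\infty$): this yields the central identity
\[
\int_{\R^2}\!\left(1+\kappa w^2\right)|\nu|^2\,m^\eps_{\bx}\,\dD\bu
\;=\;-\,2\int_{\R^2}\! w\,\nu\,\partial_w\nu\,m^\eps_{\bx}\,\dD\bu.
\]

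From here both estimates fall out of Cauchy–Schwarz. Applying Cauchy--Schwarz on the right gives
\[
\|\nu\|_{L^2(m^\eps_{\bx})}^2 \,+\, \kappa\,\|w\,\nu\|_{L^2(m^\eps_{\bx})}^2
\;\leq\; 2\,\|w\,\nu\|_{L^2(m^\eps_{\bx})}\,\|\partial_w\nu\|_{L^2(m^\eps_{\bx})}.
\]
Discarding the non-negative first term on the left and dividing by $\|w\,\nu\|_{L^2(m^\eps_{\bx})}$ (after ruling out the trivial case) yields directly the second inequality, namely $\|w\,\nu\|_{L^2(m^\eps_{\bx})}\leq (2/\kappa)\,\|\partial_w\nu\|_{L^2(m^\eps_{\bx})}$. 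To obtain the first inequality, I would instead dominate the right-hand side using Young's inequality with parameter tuned to absorb the $\kappa\|w\nu\|^2$ term, namely $2ab\leq \kappa a^2 + b^2/\kappa$ with $a=\|w\,\nu\|_{L^2(m^\eps_{\bx})}$ and $b=\|\partial_w\nu\|_{L^2(m^\eps_{\bx})}$; the $\kappa\|w\,\nu\|^2$ contribution cancels with the matching term on the left and one is left with $\|\nu\|_{L^2(m^\eps_{\bx})}^2\leq \kappa^{-1}\|\partial_w\nu\|_{L^2(m^\eps_{\bx})}^2$.

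The argument is short and there is no serious obstacle. The only technical point worth verifying is the validity of the integration by parts, i.e.\ the vanishing of the boundary term $[w\,|\nu|^2 m^\eps_{\bx}]_{w=-\infty}^{w=+\infty}$, which can be justified by a standard density argument (approximate $\nu$ by compactly supported smooth functions in $H^1_w(m^\eps_{\bx})$ and pass to the limit); the $v$-integration plays no role and can be kept as a parameter throughout.
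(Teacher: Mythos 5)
Your proof is correct and follows essentially the same route as the paper: the same integration-by-parts identity $\frac{1}{2}\int(1+\kappa w^2)|\nu|^2 m^\eps_\bx\,\dD\bu = -\int w\,\nu\,\partial_w\nu\,m^\eps_\bx\,\dD\bu$ (equivalent to yours up to a factor of two), then Young's inequality for the first estimate and Cauchy--Schwarz for the second. The remark about justifying the vanishing boundary term is sound but not developed in the paper either.
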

\begin{proof}
	The proof relies on the following relation
	\[
	\frac{1}{2}
	\int_{\R^2}
	\,
	\left(
	1\,+\,
	\kappa\,w^2
	\right)
	\left|
	\,\nu\,
	\right|^2\,m ^\eps_{\bx}(\bu)\,\dD\bu
	\,=\,
	-
	\int_{\R^2}
	\,w
	\,\nu\,
	\partial_w\,\nu \,m ^\eps_{\bx}(\bu)\,\dD\bu\,,
	\]
	which is obtained after an integration by part in the right-hand side of the equality. From the latter relation we obtain the result applying Young's inequality in the right-hand side for the first estimate and Cauchy-Schwarz inequality for the second one.\\
\end{proof}
From Lemma \ref{lemme technique L 2 m}, we deduce regularity estimates for the solution $\nu^\eps$ to equation \eqref{nu:eq}. The main challenge consists in propagating the $\scH^0$-norm. Then we easily adapt our analysis to the case of the $\scH^k$-norms, when $k$ is greater than $0$. Indeed, the $w$-derivatives of $\nu^\eps$ solve equation \eqref{nu:eq} with additional source terms which we are able to control with the dissipation brought by the Fokker-Planck operator. More precisely, equation \eqref{nu:eq} on $\nu^\eps$ reads as follows
\begin{equation*}
	\partial_t\, \nu^\eps
	\,+\,
	\scA^\eps
	\left[
	\,\nu^\eps\, \right]
	\,=\,
	0\,,
\end{equation*}
where the operator $\scA^\eps$ is given by
\begin{equation}\label{op A}
	\scA^\eps
	\left[
	\,\nu^\eps\, \right]
	\,=\,
	\mathrm{div}_{\bu}
	\left[
	\,\mathbf{b}^\eps_0\,\nu^\eps\, \right]
	\,-\,
	\frac{1}{
		(\theta^\eps)^2}\,
	\cF_{\rho_0^\eps}
	\left[\,
	\nu^\eps\,
	\right].
\end{equation}
With this notation, the equations on the $w$-derivatives read as follows
\begin{equation}\label{eq:h}
	\partial_t\, 
	h^\eps
	\,+\,
	\scA^\eps
	\left[
	\,h^\eps\, \right]
	\,=\,
	\frac{1}{\theta^\eps}
	\,
	\partial_v\,
	\nu^\eps
	\,+\,
	b
	\,
	h^\eps
	\,
	,
\end{equation}
where $\ds h^\eps\,=\,\partial_w\,\nu^\eps$, and
\begin{equation}\label{eq:g}
	\partial_t\, 
	g^\eps
	\,+\,
	\scA^\eps
	\left[
	\,g^\eps\, \right]
	\,=\,
	\frac{2}{\theta^\eps}
	\,
	\partial_v\,
	h^\eps
	\,+\,
	2\,b
	\,
	g^\eps
	\,
	,
\end{equation}
where $g^\eps$ is given by $
\displaystyle
\partial_{w}^{\,2}\,\nu^\eps
$.
\begin{proposition}\label{estime L 2 m g eps}
	Under assumptions \eqref{hyp1:N}-\eqref{hyp2:N} and \eqref{hyp3:N} on
	the drift $N$, \eqref{hyp2:psi} on the interaction kernel $\Psi$,
	consider a sequence of smooth solutions $(\mu^\eps)_{\eps\,>\,0}$ to
	\eqref{kinetic:eq} with initial conditions satisfying assumptions
	\eqref{hyp:rho0}-\eqref{hyp2:f0} and \eqref{hyp4:f0} with an exponent
	$\kappa$ greater than $1/(2b)$. Then, there exists a
	constant $C>0$, such that, for~all~$\eps\,>\,0$, we have
	\[
	\left.\\[0,5em]
	\left\|\,
	\partial_{w}^{\,k}\,
	\nu^\eps(t\,,\,\bx)\,
	\right\|_{L^2\left(m^\eps_{\bx}\right)}
	\,\,\leq\,\,
	e^{Ct}\,
	\left\|\,\partial_{w}^{\,k}\,
	\nu^\eps_0(\bx)\,
	\right\|
	_{L^2
		\left(
		m^\eps_{\bx}\right)}\,
	, 
	\quad\forall \,(t,\bx)\, \in\,\R^+\times K\,,
	\right.\\[0,3em]
	\]
	for all $k$ in $\left\{0,1,2\right\}$.
	%and some positive constant $C$ only depending on $\kappa$, $m_*$, $m_p$, $\ols{m}_p$ (see assumptions \eqref{hyp:rho0}, \eqref{hyp1:f0} and \eqref{hyp2:f0}) and the data of the problem: $N$, $A_0$ and $\Psi$.
\end{proposition}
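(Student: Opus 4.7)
My approach is to perform weighted $L^2(m^\eps_\bx)$ energy estimates on the hierarchy \eqref{nu:eq}, \eqref{eq:h}, \eqref{eq:g} for $\nu^\eps$, $h^\eps := \partial_w \nu^\eps$ and $g^\eps := \partial_w^{\,2}\nu^\eps$, and to combine them into a single Gronwall-type inequality. I fix $\bx \in K$ throughout, noting that $m^\eps_\bx$ is time-independent because $\rho_0^\eps$ is.

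For the base case $k=0$, I test \eqref{nu:eq} against $\nu^\eps m^\eps_\bx$ and integrate in $\bu$. The first lemma of Section \ref{Estimates for strong convergence} produces the exact dissipation $-(\theta^\eps)^{-2}\cD_{\rho_0^\eps}[\nu^\eps]$, while Lemma \ref{lemme technique L 2 m}, applied with some $\alpha\in(1/(2b\kappa),1)$ (admissible thanks to assumption \eqref{condition kappa}), controls the transport contribution by $\alpha(\theta^\eps)^{-2}\cD_{\rho_0^\eps}[\nu^\eps] + C\|\nu^\eps\|^2_{L^2(m^\eps_\bx)}$. Summing,
\[
\tfrac{1}{2}\tfrac{\dD}{\dD t}\|\nu^\eps\|^2_{L^2(m^\eps_\bx)} + \tfrac{1-\alpha}{(\theta^\eps)^2}\cD_{\rho_0^\eps}[\nu^\eps] \leq C\|\nu^\eps\|^2_{L^2(m^\eps_\bx)},
\]
and Gronwall closes this case while leaving the dissipation available for the higher-order steps.

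For $k=1$, I energy-estimate \eqref{eq:h}: the operator $\scA^\eps$ produces the same dissipative structure $-(1-\alpha)(\theta^\eps)^{-2}\cD_{\rho_0^\eps}[h^\eps] + C\|h^\eps\|^2_{L^2(m^\eps_\bx)}$ as in the base case, and the stiff source $(1/\theta^\eps)\partial_v \nu^\eps + bh^\eps$ is handled by integration by parts in $v$, which transfers the $v$-derivative onto $h^\eps m^\eps_\bx$. Cauchy--Schwarz then yields a bound of the form $(1/\theta^\eps)\|\nu^\eps\|_{L^2(m^\eps_\bx)}\cD_{\rho_0^\eps}[h^\eps]^{1/2}$, which by Young's inequality is in turn bounded by $\eta(\theta^\eps)^{-2}\cD_{\rho_0^\eps}[h^\eps] + C_\eta\|\nu^\eps\|^2_{L^2(m^\eps_\bx)}$. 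Choosing $\eta<1-\alpha$, the remainder $\cD_{\rho_0^\eps}[h^\eps]/(\theta^\eps)^2$ is absorbed by the $k=1$ dissipation, and the lower-order $\|\nu^\eps\|^2$ is folded into the composite energy $\mathcal{E}_1^\eps := \|\nu^\eps\|^2_{L^2(m^\eps_\bx)} + \|h^\eps\|^2_{L^2(m^\eps_\bx)}$, yielding $\dD\mathcal{E}_1^\eps/\dD t \leq C\mathcal{E}_1^\eps$. The case $k=2$ is structurally identical: the source $(2/\theta^\eps)\partial_v h^\eps + 2bg^\eps$ in \eqref{eq:g}, integrated by parts in $v$ against $g^\eps m^\eps_\bx$, is bounded by $\eta(\theta^\eps)^{-2}\cD_{\rho_0^\eps}[g^\eps] + C_\eta\|h^\eps\|^2_{L^2(m^\eps_\bx)}$ and absorbed into $\mathcal{E}_2^\eps := \mathcal{E}_1^\eps + \|g^\eps\|^2_{L^2(m^\eps_\bx)}$, after which a final Gronwall closes the argument.

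The only real obstacle is the closure of the stiff $(1/\theta^\eps)\partial_v$ source terms as $\theta^\eps\to 0$. The key point is that after integration by parts the stiff $v$-derivative lands on the test function in such a way that it matches the $L^2$-density of the Fokker--Planck dissipation $\cD_{\rho_0^\eps}$ at the current level; this allows one full power of $1/\theta^\eps$ to be placed on the dissipative side via Young, leaving only a non-singular lower-order remainder. The constraint $\kappa > 1/(2b)$ is precisely what guarantees the margin $1-\alpha > 0$ needed to carry out these absorptions at each level of the hierarchy. Taking the supremum over $\bx\in K$ of the resulting bound on each $\|\partial_w^k\nu^\eps(\bx)\|^2_{L^2(m^\eps_\bx)}$ yields the stated $\scH^k$-norm estimate.
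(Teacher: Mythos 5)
Your proof follows the paper's overall architecture — weighted $L^2(m^\eps_\bx)$ energy estimate on \eqref{nu:eq}, Lemma~\ref{lemme technique L 2 m} with $\alpha<1$ to absorb the transport into the Fokker--Planck dissipation, then the same template on \eqref{eq:h} and \eqref{eq:g} with the stiff source $(1/\theta^\eps)\partial_v(\cdot)$ integrated by parts in $v$ so that it lands on $\cD_{\rho_0^\eps}$ and is split off by Young — and each of these steps is sound.

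The one place you diverge is the closure for $k=1,2$. After Young's inequality you are left with the lower-order term $C_\eta\|\nu^\eps\|^2_{L^2(m^\eps)}$ (respectively $C_\eta\|h^\eps\|^2_{L^2(m^\eps)}$), and you fold it into a composite energy $\mathcal{E}_k^\eps = \sum_{j\leq k}\|\partial_w^j\nu^\eps\|^2_{L^2(m^\eps)}$ and Gronwall on the sum. The paper instead invokes the Poincar\'e-type inequality of Lemma~\ref{P ineq}, namely $\|\nu\|_{L^2(m^\eps_\bx)}\leq \kappa^{-1/2}\|\partial_w\nu\|_{L^2(m^\eps_\bx)}$, to promote that lower-order term up a level, i.e.\ $\|\nu^\eps\|^2\leq\kappa^{-1}\|h^\eps\|^2$, so the differential inequality closes in $\|h^\eps\|^2$ alone and Gronwall gives directly $\|h^\eps(t)\|\leq e^{Ct}\|h^\eps_0\|$ — exactly the decoupled form stated. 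Your composite route gives $\|h^\eps(t)\|\leq e^{Ct}\,\mathcal{E}_1^\eps(0)^{1/2}$, and to match the statement you would still need to apply Lemma~\ref{P ineq} once at $t=0$ to bound $\mathcal{E}_1^\eps(0)^{1/2}\lesssim\|h^\eps_0\|_{L^2(m^\eps)}$; this costs a multiplicative constant in front that cannot be absorbed into $e^{Ct}$ near $t=0$. So your bound is slightly weaker than the clean form the proposition asserts, though entirely adequate for the downstream uses. The practical takeaway is that Lemma~\ref{P ineq} is not optional padding in the paper — it is what decouples the hierarchy and delivers the statement exactly as written.
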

\begin{proof}
	We start with $k = 0$. We compute the time derivative of 
	$
	\displaystyle
	\|\,\nu^\eps\,\|^2_{L^2\left(m ^\eps\right)}$ multiplying equation \eqref{nu:eq} by 
	$
	\displaystyle 
	\nu^\eps \, m
	$ and integrating with respect to $\bu$. After integrating by part the stiffer term, we obtain
	\begin{equation*}
		\frac{1}{2}
		\frac{\dD}{\dD t}\,
		\|\,\nu^\eps\,\|_{L^2\left(m ^\eps\right)}^2
		\,+\,
		\frac{1}{
			\left(
			\theta^\eps
			\right)^2
		}\,
		\cD_{\rho_0^\eps}
		\left[\,\nu^\eps\,
		\right]
		\,=\,
		-
		\left\langle\,
		\mathrm{div}_{\bu}
		\left[\,
		\mathbf{b}^\eps_0\,
		\nu^\eps\,
		\right]\,,\,
		\nu^\eps\,
		\right\rangle_{L^2\left(m ^\eps\right)}\,,
	\end{equation*}
	for all $\eps\,>\,0$ and all 
	$\displaystyle
	\left(
	t,\, \bx
	\right)
	\in 
	\R_+ \times K$. Since $\kappa$ is greater than $1/(2b)$, we apply Lemma \ref{lemme technique L 2 m} with $\alpha\,=\,1\,$. This leads to the following inequality
	\begin{equation*}
		\frac{\dD}{\dD t}\,
		\|\,\nu^\eps\,\|_{L^2\left(m ^\eps\right)}^2
		\,\leq\,
		C\,
		\|\,\nu^\eps\,\|_{L^2\left(m ^\eps\right)}^2\,
		,
	\end{equation*}
	for some constant $C$ only depending on $\kappa$, $m_*$, $m_p$, $\ols{m}_p$ and on the data of the problem: $N$, $A_0$ and $\Psi$. According to Gronwall's lemma, it yields
	\[
	\left.\\[0,5em]
	\left\|\,
	\nu^\eps(t\,,\,\bx)\,
	\right\|_{L^2\left(m^\eps_{\bx}\right)}
	\,\,\leq\,\,
	e^{Ct}\,
	\left\|\,
	\nu^\eps(0\,,\,\bx)\,
	\right\|
	_{L^2
		\left(
		m^\eps_{\bx}\right)}\,
	, 
	\quad\forall \,(t,\bx)\, \in\,\R^+\times K\,.
	\right.
	\]
	
	Let us now treat the case
	$
	\ds
	k\,=\,1$. We write $\ds h^\eps\,=\,\partial_w\,\nu^\eps$. We compute the derivative of
	$
	\displaystyle
	\|\,
	h^\eps\,\|^2_{L^2\left(m ^\eps\right)}$ multiplying equation \eqref{eq:h} by 
	$
	\ds
	h^\eps\, m^\eps$
	and integrating with respect to $\bu$. After integrating by part the stiffer term, we obtain
	\begin{equation*}
		\frac{1}{2}
		\frac{\dD}{\dD t}\,
		\|\,h^\eps\,
		\|_{L^2\left(m ^\eps\right)}^2
		\,+\,
		\frac{1}{
			\left(
			\theta^\eps
			\right)^2
		}\,
		\cD_{\rho_0^\eps}
		\left[\,
		h^\eps
		\,
		\right]
		\,=\,
		-
		\left\langle\,
		\mathrm{div}_{\bu}
		\left[\,
		\mathbf{b}^\eps_0\,
		h^\eps\,
		\right]\,,\,
		h^\eps\,
		\right\rangle_{L^2\left(m ^\eps\right)}
		\,+\,b\,
		\|\,
		h^\eps\,
		\|_{L^2\left(m ^\eps\right)}^2
		\,+\,
		\cB\,,
	\end{equation*}
	for all $\eps\,>\,0$ and all 
	$\displaystyle
	\left(
	t,\, \bx
	\right)
	\in 
	\R_+ \times K$, where $\cB$ is given by
	\[
	\cB\,=\,
	\frac{1}{\theta^\eps}
	\int_{\R^2}
	\partial_v\,\nu^\eps\,
	h^\eps
	\,m ^\eps\, \dD \bu\,.
	\]
	We estimate $\cB$ integrating by part and applying Young's inequality. It yields
	\[
	\cB\,\leq\,
	\frac{C}{\eta}\,
	\|\,
	\nu^\eps\,
	\|_{L^2\left(m ^\eps\right)}^2
	+\,
	\frac{\eta}{
		\left(
		\theta^\eps
		\right)^2
	}\,
	\cD_{\rho_0^\eps}
	\left[\,
	h^\eps\,
	\right]
	\,.
	\]
	for some positive constant $C$ and
	for all positive $\eta$. Then we apply Lemma \ref{P ineq}, which yields
	\[
	\cB\,\leq\,
	\frac{C}{\eta}\,
	\|\,
	h^\eps\,
	\|_{L^2\left(m ^\eps\right)}^2
	+\,
	\frac{\eta}{
		\left(
		\theta^\eps
		\right)^2
	}\,
	\cD_{\rho_0^\eps}
	\left[\,
	h^\eps\,
	\right]
	\,,
	\]
	and conclude this step following the same method as in the former step of the proof.\\
	
	The last case 
	$\ds k
	=
	2$ relies on the same arguments as the former step. Indeed, equation \eqref{eq:g} on $\partial^2_w\, \nu^\eps$  is the same as equation \eqref{eq:h} on
	$\partial_w\, \nu^\eps$
	up to a constant. Consequently, we skip the details and conclude this proof.\\
\end{proof} 
Due to the cross terms between the $v$ and $w$ variables in equation \eqref{kinetic:eq}, we are led to estimate mixed quantities of the form 
$
\ds
w^{k_1}\,\partial_{w}^{\,k_2}\,
\nu^\eps
$. These estimates are easily obtained from Proposition \ref{estime L 2 m g eps} and Lemma \ref{P ineq}
\begin{corollary}\label{estime mixed quantities}
	Under the assumptions of Proposition \ref{estime L 2 m g eps}, we consider 
	$\ds(k\,,\,k_1\,,\,k_2)$ in $\N^{\,3}$ such that 
	$\ds
	k_1\,+\,k_2\,=\,k$ and
	$\ds
	k\,\leq\,2$.
	there exists a
	constant $C>0$, such that, for~all~$\eps\,>\,0$, we have
	\[
	\left.\\[0,5em]
	\left\|\,
	\left(w^{k_1}\,\partial_{w}^{\,k_2}\right)\,
	\nu^\eps(t\,,\,\bx)\,
	\right\|_{L^2\left(m^\eps_{\bx}\right)}
	\,\,\leq\,\,
	\left(
	\frac{2}{\kappa}
	\right)^{k_1}\,e^{Ct}\,
	\left\|\,\partial_{w}^{\,
		k}\,
	\nu^\eps_0(\bx)\,
	\right\|
	_{L^2
		\left(
		m^\eps_{\bx}\right)}\,
	, 
	\quad\forall \,(t,\bx)\, \in\,\R^+\times K\,,
	\right.\\[0,3em]
	\]
	%for some positive constant $C$ only depending on $\kappa$, $m_*$, $m_p$, $\ols{m}_p$ (see assumptions \eqref{hyp:rho0}, \eqref{hyp1:f0} and \eqref{hyp2:f0}) and the data of the problem:~$N$,~$A_0$ and $\Psi$.
\end{corollary}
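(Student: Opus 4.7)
The approach is to reduce the corollary to Proposition~\ref{estime L 2 m g eps} by establishing, and then iterating, a weighted Poincar\'e-type inequality that trades each power of $w$ against a $w$-derivative at the multiplicative cost of $2/\kappa$. Since only $k\leq 2$ needs to be treated, and the case $k_1=0$ is nothing but Proposition~\ref{estime L 2 m g eps} applied with exponent $k_2=k$, I only need to handle $k_1\in\{1,2\}$.

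The central building block I would establish is the higher weighted Poincar\'e estimate
\begin{equation*}
\|\,w^{k_1}\,f\,\|_{L^2(m^\eps_{\bx})}
\;\leq\;
\frac{2}{\kappa}\,\|\,w^{k_1-1}\,\partial_{w}f\,\|_{L^2(m^\eps_{\bx})},
\qquad k_1\in\{1,2\},
\end{equation*}
valid for any smooth $f$ in the corresponding weighted space. For $k_1=1$ this is exactly the second inequality of Lemma~\ref{P ineq}. For $k_1=2$ I would mimic the proof of that lemma: using $\partial_w m^\eps_{\bx}=\kappa\,w\,m^\eps_{\bx}$ and integrating by parts the expression $\int w^3 f^2\,\partial_w m^\eps_{\bx}\,\dD\bu$ yields the identity
\begin{equation*}
\kappa\,\|w^2 f\|_{L^2(m^\eps_{\bx})}^{2}
\,+\,3\,\|w f\|_{L^2(m^\eps_{\bx})}^{2}
\;=\;
-2\int_{\R^2} w^3\,f\,\partial_w f\,m^\eps_{\bx}\,\dD\bu.
\end{equation*}
Bounding the right-hand side by Cauchy--Schwarz and then Young's inequality with parameter $\kappa/2$ absorbs a portion $(\kappa/2)\|w^2 f\|^{2}$ on the left; the non-negative remainder $3\|w f\|^{2}$ is simply discarded, yielding $\|w^2 f\|_{L^2(m^\eps_{\bx})}\leq (2/\kappa)\,\|w\,\partial_w f\|_{L^2(m^\eps_{\bx})}$.

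Iterating this inequality $k_1$ times, with $f$ successively taken to be $\partial_{w}^{\,k_2}\nu^\eps$ and then, if $k_1=2$, $\partial_{w}^{\,k_2+1}\nu^\eps$, I obtain
\begin{equation*}
\|\,w^{k_1}\,\partial_{w}^{\,k_2}\,\nu^\eps(t,\bx)\,\|_{L^2(m^\eps_{\bx})}
\;\leq\;
\left(\frac{2}{\kappa}\right)^{k_1}
\|\,\partial_{w}^{\,k}\,\nu^\eps(t,\bx)\,\|_{L^2(m^\eps_{\bx})}.
\end{equation*}
Applying Proposition~\ref{estime L 2 m g eps} with index $k$ then bounds the right-hand side by $e^{Ct}\,\|\partial_{w}^{\,k}\nu^\eps_{0}(\bx)\|_{L^2(m^\eps_{\bx})}$, uniformly in $\eps$, which is precisely the stated estimate.

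The only mildly delicate point is the $k_1=2$ version of the higher Poincar\'e inequality: a naive iteration of Lemma~\ref{P ineq} applied to the product $w\nu$ generates a lower-order contribution via the product rule and therefore fails to produce the sharp constant $(2/\kappa)^{k_1}$. The integration-by-parts identity above is robust to this issue because the cross term $3\|wf\|^{2}$ appears with a favourable sign and can simply be thrown away, preserving the expected constant.
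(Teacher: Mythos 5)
Your reduction to Proposition~\ref{estime L 2 m g eps} via a weighted Poincar\'e inequality is exactly the paper's strategy, so the overall route is the same. Where you add value is in noticing that Lemma~\ref{P ineq}, applied naively to $w\nu$, produces the cross term $\nu + w\partial_w\nu$ and leads to a constant $6/\kappa^2$ rather than $(2/\kappa)^2$ for the case $k_1=2$, $k_2=0$; the paper's one-line invocation of Lemma~\ref{P ineq} glosses over this. Your integration-by-parts identity
\[
\kappa\,\|w^2 f\|_{L^2(m^\eps_{\bx})}^{2}\,+\,3\,\|w f\|_{L^2(m^\eps_{\bx})}^{2}\;=\;-2\int_{\R^2} w^3\,f\,\partial_w f\,m^\eps_{\bx}\,\dD\bu
\]
(obtained from $\partial_w m^\eps_{\bx}=\kappa\,w\,m^\eps_{\bx}$), followed by Cauchy--Schwarz and Young with parameter $\kappa/2$ and the discarding of the favourable term $3\|wf\|^2$, does yield $\|w^2 f\|\le(2/\kappa)\|w\,\partial_w f\|$ and hence, after one further application of Lemma~\ref{P ineq}, the stated constant $(2/\kappa)^{k_1}$. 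This is a genuine completion of an under-justified step; note, though, that in the places where the corollary is actually used the exact constant is irrelevant (it gets swallowed into a generic $C$), and the cases invoked have $k_1\le 1$, so the gap is cosmetic rather than load-bearing for the rest of the paper.
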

\begin{proof}
	We consider 
	$\ds(k\,,\,k_1\,,\,k_2)$ in $\N^{\,3}$ such that 
	$\ds
	k_1\,+\,k_2\,=\,k$ and
	$\ds
	k\,\leq\,2$ and point out that according to Lemma \ref{P ineq}, we have
	\[
	\left.\\[0,5em]
	\left\|\,
	\left(w^{k_1}\,\partial_{w}^{\,k_2}\right)\,
	\nu^\eps(t\,,\,\bx)\,
	\right\|_{L^2\left(m^\eps_{\bx}\right)}
	\,\,\leq\,\,
	\left(
	\frac{2}{\kappa}
	\right)^{k_1}\,
	\left\|\,\partial_{w}^{\,k
	}\,
	\nu^\eps(t\,,\,\bx)\,
	\right\|_{L^2\left(m^\eps_{\bx}\right)}\,
	.
	\right.\\[0,3em]
	\]
	Consequently, we obtain the result applying Proposition \ref{estime L 2 m g eps}.\\
\end{proof}

We conclude this section with providing regularity estimates for the limiting distribution $\ols{\nu}$ with respect to the adaptation variable, which solves \eqref{bar nu:eq}. 
% More precisely, we prove that the 
% $\ds
% H^{1}
% $-norm
% is propagated along the trajectories of \eqref{bar nu:eq}.
The proof for this result is mainly computational since we have an explicit formula for the solutions to equation \eqref{bar nu:eq}.
\begin{lemma}\label{lemme:bar nu}
	Consider some index $k$ lying in $\{0\,,\,1\}$ and some $\ols{\nu}_0$ lying in 
	$
	\displaystyle
	\scH^{k}
	\left(
	\ols{m}
	\right)
	$. The solution $\ols{\nu}$ to equation \eqref{bar nu:eq} with initial condition 
	$
	\ds\ols{\nu}_0
	$ verifies
	\[
	\left\|\,
	\ols{\nu}(t)\,
	\right\|_
	{
		\scH^{k}
		\left(
		\ols{m}
		\right)
	}
	\,\leq\,
	\exp{
		\left(
		\left(k\,+\,\frac{1}{2}
		\right)
		\,b\,t
		\right)}\,
	\left\|\,
	\ols{\nu}_0\,
	\right\|_
	{
		\scH^{k}
		\left(
		\ols{m}
		\right)
	}\,,
	\quad\forall \,t \in\R^+\,.
	\]
\end{lemma}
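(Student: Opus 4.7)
The plan is to exploit the linearity of equation \eqref{bar nu:eq}, which admits explicit solutions via the method of characteristics. The ODE $\dot{w}=-b\,w$ along characteristics gives $w(t)=e^{-bt}w_0$, and a direct verification shows that the solution with initial datum $\ols{\nu}_0$ is
\[
\ols{\nu}(t,\bx,w)\,=\,e^{bt}\,\ols{\nu}_0\!\left(\bx,\,e^{bt}\,w\right)\,,\quad (t,\bx,w)\in\R^+\times K\times \R\,.
\]
Indeed, differentiating in $t$ and comparing with $b\,\partial_w(w\,\ols{\nu})=b\,\ols{\nu}+b\,w\,\partial_w\ols{\nu}$ confirms that \eqref{bar nu:eq} is satisfied, and the initial condition is trivial.

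The remaining work is to estimate the weighted Sobolev norm by a change of variables. For $k=0$, I substitute $\ttilde w=e^{bt}w$ in
\[
\int_\R\bigl|\ols{\nu}(t,\bx,w)\bigr|^2\,\ols{m}(w)\,\dD w\,=\,e^{2bt}\int_\R\bigl|\ols{\nu}_0(\bx,e^{bt}w)\bigr|^2\,\ols{m}(w)\,\dD w\,,
\]
which after the change of variable $\ttilde w=e^{bt}w$ (with $\dD w=e^{-bt}\dD\ttilde w$) becomes
\[
e^{bt}\int_\R\bigl|\ols{\nu}_0(\bx,\ttilde w)\bigr|^2\,\ols{m}(e^{-bt}\ttilde w)\,\dD \ttilde w\,.
\]
The decisive observation is that since $\ols{m}(w)$ is a Gaussian-weighted exponential that is \emph{increasing} in $|w|$, contraction in $w$ makes the weight smaller: $\ols{m}(e^{-bt}\ttilde w)\leq \ols{m}(\ttilde w)$ for all $t\geq 0$. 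Taking a square root yields the desired bound with factor $e^{bt/2}$, matching $(k+1/2)\,b\,t$ with $k=0$.

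For $k=1$, differentiating the explicit formula gives $\partial_w\ols{\nu}(t,\bx,w)=e^{2bt}\,(\partial_w\ols{\nu}_0)(\bx,e^{bt}w)$. The same change of variables produces
\[
\int_\R\bigl|\partial_w\ols{\nu}(t,\bx,w)\bigr|^2\,\ols{m}(w)\,\dD w\,\leq\,e^{3bt}\int_\R\bigl|\partial_w\ols{\nu}_0(\bx,\ttilde w)\bigr|^2\,\ols{m}(\ttilde w)\,\dD\ttilde w\,,
\]
giving the factor $e^{3bt/2}=e^{(1+1/2)bt}$. Summing the two contributions (and taking the worst exponent) yields the stated bound. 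Finally I take the supremum in $\bx\in K$ to conclude. There is no real obstacle here: everything reduces to the explicit formula and the monotonicity of $\ols{m}$ under the contraction $w\mapsto e^{-bt}w$.
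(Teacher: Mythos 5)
Your proof is correct and takes essentially the same route as the paper: write down the explicit solution formula $\ols{\nu}(t,\bx,w)=e^{bt}\,\ols{\nu}_0(\bx,e^{bt}w)$, change variables in the weighted integral, and use that the Gaussian weight $\ols{m}$ is non-decreasing in $|w|$ so that $\ols{m}(e^{-bt}\tilde w)\leq\ols{m}(\tilde w)$. The paper stops after stating the formula and says ``we easily obtain the expected result,'' so you have simply made that elementary computation explicit.
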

\begin{proof}
	Since $\ols{\nu}$ solves \eqref{bar nu:eq}, it is given by the following formula
	\[
	\ols{\nu}_{t,\,\bx}(w)
	\,\,=\,\,
	e^{bt}\,
	\ols{\nu}_{0,\,\bx}
	\left(\,e^{bt}\,w
	\right)\,
	, 
	\quad\forall \,(t,\bx)\, \in\,\R^+\times K\,.
	\]
	Consequently, we easily obtain the expected result.
\end{proof}
\subsection{Proof of Theorem \ref{th:2}}\label{sec:proof th21}
In the forthcoming analysis we quantify the convergence of $\nu^\eps$ towards the asymptotic profile $\nu$ given by
\[
\nu\,=\,
\cM_{\rho_0^\eps}\otimes \bar{\nu}\,,
\]
in the functional spaces $\ds
\scH^{k}
\left(m ^\eps
\right)$. We introduce the orthogonal projection of $\nu^\eps$ onto the space of function with marginal 
$
\ds
\cM_{\rho_0^\eps}
$ with respect to the voltage variable
\[
\Pi\,\nu^\eps
\,=\,
\cM_{\rho_0^\eps}\otimes \bar{\nu}^\eps\,.
\]
Furthermore, we consider the orthogonal component $\ds\,\nu^\eps_{\bot}\,$ of $\nu^\eps$ with respect to the latter projection
\[
\nu^\eps_{\bot}
\,=\,
\nu^\eps
\,-\,
\Pi\,\nu^\eps\,.
\]
With these notations we have 
\[
\left\|\,
\nu^\eps
\,-\,
\nu\,
\right\|_{\scH^{k}
	\left(
	m^\eps
	\right)}^2
\,=\,
\left\|\,
\nu^\eps_{\bot}\,
\right\|_{\scH^{k}
	\left(
	m^\eps
	\right)}^2
\,+\,
\left\|\,
\ols{\nu}^\eps
\,-\,
\ols{\nu}\,
\right\|_{\scH^{k}
	\left(
	\ols{m}
	\right)}^2\,,
\]
for $k$ in 
$\ds 
\left\{
0\,,\,1
\right\}$.
Therefore, we prove that $\ds\,\nu^\eps_{\bot}\,$ and
$\ds\,
\ols{\nu}^\eps
\,-\,
\ols{\nu}\,$ vanish
as $\eps$ goes to zero in both 
$\ds \scH^0$ and
$\ds \scH^1$.\\

\subsubsection{
	Estimates for
	$\,
	\nu^\eps_{\bot}\,
	$}

\noindent Our strategy relies on the same arguments as the ones we developed in the former section to prove Proposition \ref{estime L 2 m g eps}. Indeed, the equation satisfied by 
$
\nu^\eps_{\bot}$ is the same equation as equation \eqref{nu:eq} solved by $\nu^\eps$ with additional source terms. It reads as follows
\begin{equation}\label{nu orth:eq}
	\partial_t\, \nu^\eps_{\bot}
	\,+\,
	\scA^\eps
	\left[
	\,\nu^\eps_{\bot}\, \right]
	\,=\,
	\scS
	\left[
	\,\nu^\eps\,,\,\Pi\,\nu^\eps\,
	\right]\,,
\end{equation}
where the operator $\scA^\eps$ is given by
\eqref{op A} and the source terms are given by 
\begin{equation*}
	\scS^\eps
	\left[\,
	\nu^\eps\,,\,
	\Pi\,\nu^\eps\,
	\right]
	\,=\,
	\partial_{w}
	\left[\,
	a\,\theta^\eps
	\int
	v\,\nu^\eps\,\dD v\,
	\mathcal{M}_{\rho_0^\eps}
	\,-\,
	b\,w \,\Pi\,\nu^\eps\,\right]
	\,-\,
	\mathrm{div}_{\bu}
	\left[
	\,\mathbf{b}^\eps_0\,
	\Pi\,\nu^\eps
	\, \right]\,.
\end{equation*}
Consequently, our strategy consists in estimating the source terms using the regularity estimates provided by Proposition \ref{estime L 2 m g eps}. Then, we adapt our analysis to the case of 
$\displaystyle \partial_w\,\nu^\eps_{\bot}$, which we write
$\ds h^\eps_{\bot}
\,=\,
\partial_w\,\nu^\eps_{\bot}$ and which solves again the same equation up to extra terms that add no difficulty
\begin{equation}\label{h orth:eq}
	\partial_t\, h^\eps_{\bot}
	\,+\,
	\scA^\eps
	\left[
	\,h^\eps_{\bot}\, \right]
	\,=\,
	\scS
	\left[
	\,h^\eps\,,\,\Pi\,h^\eps\,
	\right]
	\,+\,
	b
	\,
	h^\eps_{\bot}
	\,+\,
	\frac{1}{\theta^\eps}
	\,
	\partial_v\,
	\nu^\eps
	\,
	,
\end{equation}
where we used the notation 
$\displaystyle
h^\eps\,=\,
\partial_w\,\nu^\eps$.
\begin{proposition}\label{estimee:nu orth}
	Under assumptions \eqref{hyp1:N}-\eqref{hyp2:N} and \eqref{hyp3:N} on the drift $N$ and \eqref{hyp2:psi} on the interaction kernel $\Psi$, consider a sequence of solutions $(\mu^\eps)_{\eps>0}$ to \eqref{kinetic:eq} with initial conditions satisfying assumptions \eqref{hyp:rho0}-\eqref{hyp2:f0} and \eqref{hyp4:f0} with an index $k$ in $\{0\,,\,1\}$ and an exponent $\kappa$ greater than $1/(2b)$. Then,
	for all $\eps$ between $0$ and $1$
	and any
	$\alpha_*$ lying in $\displaystyle 
	\left(\,
	0\,,\,1-(2b\kappa)^{-1}\,
	\right)\,
	$, there exists a constant $C>0$, independent of $\eps$, such that
	% \begin{enumerate}
		%     \item\label{item 1 nu eps orth} it holds
		\begin{equation*}
			\left.\\[0,3em]
			\|\nu^\eps_{\bot}(t)\|_{
				\scH^{k}
				\left(
				m^\eps
				\right)
			}
			\,\leq\,
			e^{Ct}\,\left\|\,
			\nu^\eps_0\,
			\right\|_
			{
				\scH^{k\,+\,1}
				\left(
				m^\eps
				\right)
			}
			\,
			\left(
			\,C\,
			\sqrt{\eps}
			\,+\,
			\min\left\{
			\,1\,,\,
			e^{
				\,
				-\,\alpha_*t\,/\,\eps\,
			}
			\,
			\eps^{-\,\alpha_*\,/\,
				(2\,m_*)}
			\right\}
			\right)
			\,,
			\quad\forall \;t \in\R^+\,.
			\right.\\[0,3em]
		\end{equation*}
		% \item\label{item 2 nu eps orth} 
		% If in addition to the latter assumptions we also suppose \eqref{hyp5:f0}, it holds
		% \begin{equation*}
			% \left.\\[0,3em]
			% \|\nu^\eps_{\bot}(t)\|_{\scH^1\left(m ^\eps\right)}
			% \,\leq\,
			% e^{Ct}\,\left\|\,
			% \nu^\eps_0\,
			% \right\|_
			% {
				% \scH^{2}
				% \left(
				% m^\eps
				% \right)
				% }
			% \,
			% \left(
			% \,C\,
			% \sqrt{\eps}
			% \,+\,
			% \min\left\{
			% \,1\,,\,
			% e^{
				% \,
				% -\,\alpha_*t\,/\,\eps\,
				% }
			% \,
			% \eps^{-\,\alpha_*\,/\,
				% (2\,m_*)}
			% \right\}
			% \right)
			% \,,
			% \quad\forall \;t\in\R^+\,.
			% \right.\\[0,3em]
			% \end{equation*}
		% \end{enumerate}
	
	%In this proposition, the constant $C$ only depends on $\alpha_*$, $\kappa$, $m_*$, $m_p$, $\ols{m}_p$ (see assumptions \eqref{hyp:rho0}, \eqref{hyp1:f0} and \eqref{hyp2:f0}) and on the data of the problem:~$N$,~$A_0$~and~$\Psi$.
\end{proposition}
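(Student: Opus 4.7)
The plan is to perform a weighted $L^2$ energy estimate on the equation \eqref{nu orth:eq} satisfied by $\nu^\eps_\bot$ and, in a second stage, on the equation \eqref{h orth:eq} satisfied by $h^\eps_\bot = \partial_w \nu^\eps_\bot$. The essential new ingredient (compared to Proposition \ref{estime L 2 m g eps}) is a spectral gap coming from the orthogonality condition $\int_{\R} \nu^\eps_\bot \,\dD v = 0$, which produces a relaxation time of order $\eps$ and makes the projection $\Pi \nu^\eps$ an attractor for the fast Fokker-Planck dynamics in $v$.

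First I would test equation \eqref{nu orth:eq} against $\nu^\eps_\bot\, m^\eps$ to derive
\begin{equation*}
\frac{1}{2}\frac{\dD}{\dD t}\|\nu^\eps_\bot\|_{L^2(m^\eps)}^2
+\frac{1}{(\theta^\eps)^2}\,\cD_{\rho_0^\eps}[\nu^\eps_\bot]
= -\langle \mathrm{div}_{\bu}[\mathbf{b}^\eps_0 \nu^\eps_\bot],\nu^\eps_\bot\rangle_{L^2(m^\eps)}
+\langle \scS^\eps[\nu^\eps,\Pi\nu^\eps],\nu^\eps_\bot\rangle_{L^2(m^\eps)}.
\end{equation*}
The transport term is absorbed using Lemma \ref{lemme technique L 2 m} with some $\alpha\in(\tfrac{1}{2b\kappa},1)$, which is possible under condition \eqref{condition kappa}. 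The crucial observation is then a Gaussian Poincaré inequality: setting $\phi=\nu^\eps_\bot/\mathcal{M}_{\rho_0^\eps}$, the factor of $m^\eps$ in the $v$-variable is proportional to $\mathcal{M}_{\rho_0^\eps}^{-1}$, so $\cD_{\rho_0^\eps}[\nu^\eps_\bot]$ reduces to the Dirichlet form of $\phi$ against the Gaussian measure $\mathcal{M}_{\rho_0^\eps}$, and $\int \phi\,\mathcal{M}_{\rho_0^\eps}\,\dD v = \int \nu^\eps_\bot\,\dD v = 0$ by construction. This yields a constant $c_*>0$, depending only on $m_*$ and $\kappa$, such that
\begin{equation*}
\cD_{\rho_0^\eps}[\nu^\eps_\bot] \geq c_*\,\|\nu^\eps_\bot\|^2_{L^2(m^\eps)}.
\end{equation*}

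Next I estimate the source term. The first piece, $\partial_w[a\theta^\eps (\int v\nu^\eps\,\dD v)\,\mathcal{M}_{\rho_0^\eps}]$, carries an explicit prefactor $\theta^\eps=O(\sqrt{\eps})$, and using $\int v\,\mathcal{M}_{\rho_0^\eps}\,\dD v=0$ together with Cauchy-Schwarz one gets a bound of the form $C\sqrt{\eps}\,\|\nu^\eps_\bot\|_{L^2(m^\eps)}\,\|\nu^\eps\|_{\scH^1(m^\eps)}$. For the remaining pieces $\partial_w[b w \Pi\nu^\eps]-\mathrm{div}_{\bu}[\mathbf{b}^\eps_0\Pi\nu^\eps]$, I would expand $\mathbf{b}^\eps_0$ using \eqref{def:b0}: the linear $w$ contribution in $\mathbf{b}^\eps_0$ cancels exactly against the $bw\Pi\nu^\eps$ term, the nonlinear drift $N(\cV^\eps+\theta^\eps v)-N(\cV^\eps)$ factors a $\theta^\eps$ via Taylor expansion, and the lower-order pieces carry either $\theta^\eps$ or $\mathcal{E}(\mu^\eps)=O(e^{-2m_* t/\eps}+\eps)$ from Proposition \ref{th:preliminary}. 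Combining these with Young's inequality, I absorb a small multiple of $\cD_{\rho_0^\eps}[\nu^\eps_\bot]/(\theta^\eps)^2$ and obtain
\begin{equation*}
\frac{\dD}{\dD t}\|\nu^\eps_\bot\|_{L^2(m^\eps)}^2
+\frac{2\alpha_*}{(\theta^\eps)^2}\|\nu^\eps_\bot\|_{L^2(m^\eps)}^2
\leq C\|\nu^\eps_\bot\|_{L^2(m^\eps)}^2
+C\,\eps\,\|\nu^\eps\|_{\scH^1(m^\eps)}^2,
\end{equation*}
after using the regularity bound from Proposition \ref{estime L 2 m g eps} on $\nu^\eps$ and the Poincaré inequality. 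Gronwall's lemma applied with the integrating factor $\exp(\int_0^t 2\alpha_*/(\theta^\eps)^2\,\dD s)$, using the explicit formula \eqref{expression for theta eps} for $\theta^\eps$, produces the two regimes: for $t \gtrsim T^\eps = \tfrac{\eps}{2m_*}|\ln \eps|$ the integrating factor behaves as $e^{2\alpha_* t/\eps}\eps^{-\alpha_*/m_*}$ and yields the $\sqrt{\eps}$ source contribution plus the exponentially damped initial term; for $t\leq T^\eps$ the trivial bound $\min\{\cdot\,,1\}$ coming from the transport-only estimate takes over.

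For $k=1$, I would repeat the same scheme on \eqref{h orth:eq}. The extra source term $\theta^{\eps\,-1}\partial_v \nu^\eps$ is the main new difficulty: although it carries a factor $1/\theta^\eps\sim 1/\sqrt{\eps}$, when paired against $h^\eps_\bot$ in the weighted inner product it can be integrated by parts in $v$ to produce $\tfrac{1}{\theta^\eps}\int \nu^\eps\,\partial_v(h^\eps_\bot m^\eps)(m^\eps)^{-1/2}\cdot(m^\eps)^{1/2}$, which by Young's inequality is bounded by a fraction of $\cD_{\rho_0^\eps}[h^\eps_\bot]/(\theta^\eps)^2$ plus $C\|\nu^\eps\|_{L^2(m^\eps)}^2$, both controlled via the spectral gap and Proposition \ref{estime L 2 m g eps} applied at order $k+1=2$. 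The term $bh^\eps_\bot$ is harmless. I expect the hardest point to be identifying the precise cancellations in the source term so that every remaining contribution genuinely carries either a factor $\sqrt{\eps}$ or a factor decaying exponentially in $t/\eps$; in particular, the Taylor expansion of $N(\cV^\eps+\theta^\eps v)-N(\cV^\eps)-\theta^\eps v N'(\cV^\eps)$ must be treated carefully to exploit the polynomial growth assumption \eqref{hyp2:N} together with the weighted $L^2$ moments controlled through the exponential weight in $m^\eps$.
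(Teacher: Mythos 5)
Your overall scheme---energy estimate on \eqref{nu orth:eq}, absorbing the transport via Lemma~\ref{lemme technique L 2 m}, invoking the Gaussian Poincar\'e inequality $\|\nu^\eps_\bot\|^2_{L^2(m^\eps)}\leq\cD_{\rho_0^\eps}[\nu^\eps_\bot]$, and then Gronwall with the integrating factor $\exp(2\alpha_*\int_0^t(\theta^\eps)^{-2}\dD s)$---is indeed what the paper does, including the treatment of the $k=1$ case and of the extra source $\theta^{\eps\,-1}\partial_v\nu^\eps$ by integration by parts plus Young. But your estimate of the main source term is incorrect, and the error is consequential.

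You claim that after cancellations every contribution of $\scS^\eps$ carries an extra factor $\theta^\eps\sim\sqrt{\eps}$, leading to the right-hand side $C\eps\|\nu^\eps\|^2_{\scH^1(m^\eps)}$ in your differential inequality. This is not true. The piece $-\mathrm{div}_{\bu}[\mathbf{b}^\eps_0\,\Pi\nu^\eps]$ has $v$-component $-\theta^{\eps\,-1}\partial_v[B^\eps_0\,\Pi\nu^\eps]$, so the $\theta^\eps$ produced by the Taylor expansion $N(\cV^\eps+\theta^\eps v)-N(\cV^\eps)=O(\theta^\eps v)$ exactly cancels the prefactor $1/\theta^\eps$ in \eqref{def:b0}; what remains is $O(1)$. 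After an integration by parts and Young's inequality this term yields $\frac{\eta}{(\theta^\eps)^2}\cD_{\rho_0^\eps}[\nu^\eps_\bot]+\frac{C}{\eta}\big(\|\ols{\nu}^\eps\|^2_{L^2(\ols{m})}+\|w\,\ols{\nu}^\eps\|^2_{L^2(\ols{m})}\big)$, and the second group is $O(1)$ by Corollary~\ref{estime mixed quantities}, \emph{not} $O(\eps)$. (Incidentally, the term $\partial_w[a\theta^\eps\int v\,\nu^\eps\,\dD v\,\mathcal{M}_{\rho_0^\eps}]$ contributes nothing when tested against $\nu^\eps_\bot\,m^\eps$ since $\int\mathcal{M}_{\rho_0^\eps}\nu^\eps_\bot\,m^\eps\,\dD v=0$, so you do not even need a bound there.)

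There is also an internal inconsistency in your argument: with the right-hand side $C\eps\|\nu^\eps\|^2$ that you wrote down, the Gronwall step would give $\|\nu^\eps_\bot\|^2\lesssim\eps\cdot\eps=\eps^2$, i.e.\ a rate $O(\eps)$ for the norm---stronger than the $O(\sqrt{\eps})$ you state you obtain, and stronger than what the proposition claims. The correct picture is that the source is $O(1)$, and the rate $\sqrt{\eps}$ comes entirely from balancing this $O(1)$ source against the $O(1/\eps)$ damping $\frac{2\alpha_*}{(\theta^\eps)^2}\|\nu^\eps_\bot\|^2$ in the ODE $y'+\frac{2\alpha_*}{\eps}y\leq C$: this gives $y\lesssim y(0)e^{-2\alpha_* t/\eps}+C\eps$, hence $\|\nu^\eps_\bot\|\lesssim\sqrt{y(0)}\,e^{-\alpha_* t/\eps}+C\sqrt{\eps}$. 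So the spectral gap is not merely an ``essential new ingredient'' sitting alongside a small source, as your write-up suggests; it is the only place the $\eps$-smallness is generated, and your source estimate should be corrected accordingly.
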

\begin{proof}
	We first treat the case $k=0$
	% \eqref{item 1 nu eps orth}
	.
	All along this step of the proof, we consider some $\eps\,>\,0$ and some $\displaystyle(t,\,\bx)$ in $\R_+\times K$; we omit the dependence with respect to $(t,\bx)$ when the context is clear. We compute the time derivative of 
	$
	\displaystyle
	\|\,\nu^\eps_{\bot}\,\|^2_{L^2\left(m ^\eps\right)}$ multiplying equation \eqref{nu orth:eq} by 
	$
	\displaystyle 
	\nu^\eps_{\bot} \, m^\eps
	$ and integrating with respect to $\bu$
	% Then we replace $\ds\scA^\eps$ by its definition (see \eqref{op A}) and integrate by part the stiffer term
	\begin{equation*}
		\left.\\[0,3em]
		\frac{1}{2}
		\frac{\dD}{\dD t}\,
		\|\nu^\eps_{\bot}\|_{L^2\left(m ^\eps\right)}^2
		\,=\,
		\left\langle\,
		\scS^\eps
		\left[\,
		\nu^\eps\,,\,
		\Pi\,\nu^\eps\,
		\right]\,,\,
		\nu^\eps_{\bot}\,
		\right\rangle_{L^2\left(m ^\eps\right)}
		\,-\,
		\left\langle\,
		\scA^\eps
		\left[\,
		\nu^\eps_{\bot}\,
		\right]\,,\,
		\nu^\eps_{\bot}\,
		\right\rangle_{L^2\left(m ^\eps\right)}
		\,,
		\right.\\[0,3em]
	\end{equation*}
	% \begin{equation*}
		% \left.\\[0,3em]
		% \frac{1}{2}
		% \frac{\dD}{\dD t}\,
		% \|\nu^\eps_{\bot}\|_{L^2\left(m ^\eps\right)}^2
		% \,+\,
		% \frac{1}{
			% \left(
			% \theta^\eps
			% \right)^2}
		% \cD_{\rho_0^\eps}
		%     \left[\,\nu^\eps_{\bot}\,
		%     \right]
		%     \,+\,
		% \left\langle\,
		% \mathrm{div}_{\bu}
		% \left[\,
		% \mathbf{b}^\eps_0\,
		% \nu^\eps_{\bot}\,
		% \right]\,,\,
		%     \nu^\eps_{\bot}\,
		%     \right\rangle_{L^2\left(m ^\eps\right)}
		% \,=\,
		% \left\langle\,
		% \scS^\eps
		% \left[\,
		% \nu^\eps\,,\,
		% \Pi\,\nu^\eps\,
		% \right]\,,\,
		%     \nu^\eps_{\bot}\,
		%     \right\rangle_{L^2\left(m ^\eps\right)}
		% \,,
		% \right.\\[0,3em]
		% \end{equation*}
	and we split the contribution of the source terms as follows
	\[
	\left\langle\,
	\scS^\eps
	\left[\,
	\nu^\eps\,,\,
	\Pi\,\nu^\eps\,
	\right]\,,\,
	\nu^\eps_{\bot}\,
	\right\rangle_{L^2\left(m ^\eps\right)}
	\,=\,
	\mathcal{A}_1 
	\,+\,
	\mathcal{A}_2
	\,,
	\]
	where 
	% the operator $\scA^\eps$ is given by \eqref{op A} and 
	the terms $\cA_1$ and $\cA_2$ are given by\\
	\begin{equation*}
		\left\{
		\begin{array}{l}
			\displaystyle  \cA_{1} \,=\,
			-\,
			\frac{1}{\theta^\eps}
			\int_{\R^2}
			\partial_v
			\left[\,
			B^\eps_0
			\left(
			t\,,\,\bx\,,\,\theta^{\eps}\,v\,,\,w
			\right)
			% \left(
			% N\left(
			% \mathcal{V}^\eps
			% +
			% \theta^\eps v
			% \right)
			% -
			% N
			% \left(\mathcal{V}^\eps
			% \right)
			% \,-\,w
			% \,-\,
			% \theta^\eps v\,\Psi*_r\rho^\eps_0
			% \,-\,
			% \mathcal{E}(\mu^\eps)
			% \right)
			\Pi\,\nu^\eps\,\right]\,
			\nu^\eps_{\bot}\,m ^\eps\, \dD \bu\,
			,\\[1.1em]
			\displaystyle  \cA_{2} \,=\,
			-
			a\,\theta^\eps\,\int_{\R^2}
			v\,
			\partial_w
			\left[\,
			\Pi\,\nu^\eps\,
			\right]\,
			\nu^\eps_{\bot}\,
			m ^\eps\, \dD \bu\,
			,
		\end{array}
		\right.\\[0.8em]
	\end{equation*}
	where $B^\eps_0$ is given by \eqref{def:b0}.\\
	
	Let us estimate $\cA_1$. After an integration by part, this term rewrites as follows
	\[
	\cA_1
	\,=\,
	\int_{\R^2}
	% \left(
	% N\left(
	% \mathcal{V}^\eps
	% +
	% \theta^\eps v
	% \right)
	% -
	% N
	% \left(\mathcal{V}^\eps
	% \right)
	% \,-\,w
	% \,-\,
	% \theta^\eps v\,\Psi*_r\rho^\eps_0
	% \,-\,
	% \mathcal{E}(\mu^\eps)
	% \right)
	B^\eps_0
	\left(
	t\,,\,\bx\,,\,\theta^{\eps}\,v\,,\,w
	\right)\,
	\Pi\,\nu^\eps\,
	\left(m ^\eps\right)^{\frac{1}{2}}\,
	\frac{1}{\theta^\eps}\,
	\partial_v
	\left[\,
	\nu^\eps_{\bot}\,m ^\eps
	\,\right]\,\left(m ^\eps\right)^{-\frac{1}{2}}\, \dD \bu\,.\]
	According to items \eqref{estimate moment mu} and \eqref{estimate error} in Proposition  \ref{th:preliminary} , $\mathcal{V}^\eps$ and 
	$
	\ds
	\mathcal{E}
	\left(
	\mu^\eps
	\right)
	$
	are uniformly bounded with respect to both
	$
	\left(
	t,\,\bx
	\right)
	\in
	\R^+
	\times 
	K
	$ and $\eps$. Moreover, according to assumptions \eqref{hyp2:psi} and \eqref{hyp:rho0}, 
	$
	\ds
	\Psi*_r\rho_0^\eps
	$ stays uniformly bounded with respect to both
	$
	\bx
	\in 
	K
	$ and $\eps$ as well.
	Consequently, applying Young's inequality to the former relation and using assumption \eqref{hyp2:N}, which ensures that $N$ has polynomial growth, we obtain
	\[
	\cA_1
	\,\leq\,
	\frac{\eta}{
		\left(
		\theta^{\eps}
		\right)^2}\,
	\cD_{\rho_0^\eps}
	\left[\,
	\nu^\eps_{\bot}\,
	\right]
	\,+\,
	\frac{C}{\eta}\,
	\int_{\R^2}
	\left(\,
	\left|
	\theta^\eps v
	\right|^{2p}
	\,+\,|w|^2
	\,+\,1\,
	\right)
	\,
	\left|\ols{\nu}^\eps
	\right|^2\,
	\mathcal{M}_{\rho_0^\eps}\,
	\ols{m}\,
	\dD \bu\,,\]
	for all positive $\eta$ and for some positive constant $C$ which only depends on $m_*$,~$m_p$~and~$\ols{m}_p$ and the data of the problem $N$,~$\Psi$~and~$A_0$. Taking advantage of the properties of the Maxwellian
	$\ds
	\mathcal{M}_{\rho_0^\eps}
	$ and since $\rho^\eps_0$ meets assumption \eqref{hyp:rho0}, the latter estimate simplifies into
	\[
	\cA_1
	\,\leq\,
	\frac{\eta}{
		\left(
		\theta^{\eps}
		\right)^2}\,
	\cD_{\rho_0^\eps}
	\left[\,
	\nu^\eps_{\bot}\,
	\right]
	\,+\,
	\frac{C}{\eta}
	\left(\,
	\left\|\,
	\ols{\nu}^\eps\,
	\right\|^2_{L^2(\ols{m})}
	\,+\,
	\left\|\,
	w\,
	\ols{\nu}^\eps\,
	\right\|^2_{L^2(\ols{m})}\,
	\right).\]
	Furthermore, according to Jensen's inequality, it holds
	\[
	\left\|\,
	\ols{\nu}^\eps\,
	\right\|^2_{L^2(\ols{m})}
	\,+\,
	\left\|\,
	w\,
	\ols{\nu}^\eps\,
	\right\|^2_{L^2(\ols{m})}
	\,\leq\,
	\left\|\,
	\nu^\eps\,
	\right\|^2_{L^2\left(m ^\eps\right)}
	\,+\,
	\left\|\,
	w\,
	\nu^\eps\,
	\right\|^2_{L^2\left(m ^\eps\right)}\,.
	\]
	Therefore, we apply Corollary \ref{estime mixed quantities} and obtain the following estimate for $\cA_1$
	\[
	\cA_1
	\,\leq\,
	\frac{\eta}{
		\left(
		\theta^{\eps}
		\right)^2}\,
	\cD_{\rho_0^\eps}
	\left[\,
	\nu^\eps_{\bot}\,
	\right]
	\,+\,
	\frac{C}{\eta}\,
	e^{Ct}\,
	\left\|\,
	\nu^\eps_0\,
	\right\|^2_
	{
		H^{1}_{w}
		\left(
		m^\eps
		\right)
	}\,.\]

	To estimate $\cA_2$, we apply the Cauchy-Schwarz inequality, use the properties of the Maxwellian 
	$\ds
	\cM_{\rho_0^\eps}
	$ and assumption \eqref{hyp:rho0}. It yields
	\[
	\cA_{2}
	\,\leq\,
	C\,
	\left(
	\left\|\,
	\partial_w\,
	\ols{\nu}^\eps\,
	\right\|^2_
	{
		L^{2}
		\left(
		m^\eps
		\right)
	}
	\,+\,
	\left\|\,
	\nu^\eps_{\bot}\,
	\right\|^2_
	{
		L^{2}
		\left(
		m^\eps
		\right)
	}
	\right)\,.
	\]
	According to the same remark as in the former step, it holds
	\[
	\left\|\,
	\partial_w\,
	\ols{\nu}^\eps\,
	\right\|^2_
	{
		L^{2}
		\left(
		\ols{m}
		\right)
	}
	\,+\,
	\left\|\,
	\nu^\eps_{\bot}\,
	\right\|^2_
	{
		L^{2}
		\left(
		m^\eps
		\right)
	}
	\,\leq\,
	\left\|\,
	\nu^\eps\,
	\right\|^2_{L^2\left(m ^\eps\right)}
	\,+\,
	\left\|\,
	\partial_w\,
	\nu^\eps\,
	\right\|^2_{L^2\left(m ^\eps\right)}\,,
	\]
	hence, applying Proposition \ref{estime L 2 m g eps}, we obtain
	\[
	\cA_{2}
	\,\leq\,
	C\,
	e^{Ct}\,
	\left\|\,
	\nu^\eps_0\,
	\right\|^2_
	{
		H^{1}_{w}
		\left(
		m^\eps
		\right)
	}\,.\\[0.5em]
	\]
	
	To evaluate the contribution of $\scA^\eps$ we replace it  by its definition \eqref{op A} and integrate by part the stiffer term. It yields
	\[
	-\,
	\left\langle\,
	\scA^\eps
	\left[\,
	\nu^\eps_{\bot}\,
	\right]\,,\,
	\nu^\eps_{\bot}\,
	\right\rangle_{L^2\left(m ^\eps\right)}
	\,=\,
	-\,
	\frac{1}{
		\left(
		\theta^\eps
		\right)^2}
	\cD_{\rho_0^\eps}
	\left[\,\nu^\eps_{\bot}\,
	\right]
	\,-\,
	\left\langle\,
	\mathrm{div}_{\bu}
	\left[\,
	\mathbf{b}^\eps_0\,
	\nu^\eps_{\bot}\,
	\right]\,,\,
	\nu^\eps_{\bot}\,
	\right\rangle_{L^2\left(m ^\eps\right)}\,.
	\]
	In order to close the estimate, we apply Lemma \ref{lemme technique L 2 m} and Proposition \ref{estime L 2 m g eps} to control the term associated to linear transport. It yields\\
	\[
	\left.
	-
	\left\langle\,
	\mathrm{div}_{\bu}
	\left[\,
	\mathbf{b}^\eps_0\,
	\nu^\eps_{\bot}\,
	\right]\,,\,
	\nu^\eps_{\bot}\,
	\right\rangle_{L^2\left(m ^\eps\right)}
	\,\,\leq\,\,
	\frac{
		\alpha
	}{
		\left(
		\theta^\eps
		\right)^2
	}\,
	\cD_{\rho_0^\eps}
	\left[\,\nu^\eps_{\bot}\,
	\right]
	\,
	+\,
	C\,
	e^{Ct}\,
	\left\|\,
	\nu^\eps_{0} \,
	\right\|^2
	_{L^2
		\left(
		m ^\eps\right)}\,
	,
	\right.\\[0,3em]
	\]
	for all positive constant $\alpha$ greater than 
	$\ds 1/(2b\kappa)$. Consequently, the former computations lead to the following differential inequality
	\begin{equation*}
		\left.\\[0,3em]
		\frac{1}{2}
		\frac{\dD}{\dD t}\,
		\|\nu^\eps_{\bot}\|_{L^2\left(m ^\eps\right)}^2
		\,+\,
		\frac{1-\alpha-\eta}{
			\left(
			\theta^\eps
			\right)^2
		}\,
		\cD_{\rho_0^\eps}
		\left[\,\nu^\eps_{\bot}\,
		\right]
		\,\leq\,
		\frac{C}{\eta}\,
		e^{Ct}\,\left\|\,
		\nu^\eps_0\,
		\right\|^2_
		{
			H^{1}_{w}
			\left(
			m^\eps
			\right)
		}\,.
		\right.\\[0,3em]
	\end{equation*}
	Based on our assumptions, it holds
	$
	\ds
	1/(2b\kappa)<1
	$ therefore we may choose $\alpha$ and $\eta$ such that 
	$\ds\,\alpha_*\,=\,
	1-\alpha-\eta\,$ lies in 
	$\displaystyle 
	\left]\,
	0\,,\,1-(2b\kappa)^{-1}\,
	\right[\,
	$.
	Furthermore, in our context, the Gaussian-Poincar\'e inequality \cite{Dolbeault and Volzone} rewrites
	\[
	\|\nu^\eps_{\bot}\|_{L^2\left(m ^\eps\right)}^2
	\,\leq\,
	\cD_{\rho_0^\eps}
	\left[\,\nu^\eps_{\bot}\,
	\right]\,.
	\]
	According to the latter remarks, the former inequality rewrites
	\begin{equation*}
		\left.\\[0,3em]
		\frac{\dD}{\dD t}\,
		\|\nu^\eps_{\bot}\|_{L^2\left(m ^\eps\right)}^2
		\,+\,
		\frac{2\,\alpha_*}{
			\left(
			\theta^\eps
			\right)^2
		}\,
		\|\nu^\eps_{\bot}\|_{L^2\left(m ^\eps\right)}^2
		\,\leq\,
		C\,
		e^{Ct}\,\left\|\,
		\nu^\eps_0\,
		\right\|^2_
		{
			H^{1}_{w}
			\left(
			m^\eps
			\right)
		}\,.
		\right.\\[0,3em]
	\end{equation*}
	We multiply this estimate by
	\[
	\displaystyle
	\exp{
		\left(
		2\,\alpha_*\,
		\int_0^t
		\frac{1}{
			\left(
			\theta^\eps
			\right)^2
		}\,\dD s
		\right)
	}\,,
	\]
	and integrate between $0$ and $t$. In the end, we deduce the following inequality
	\begin{equation*}
		\left.\\[0,3em]
		\|\,\nu^\eps_{\bot}\,\|_{L^2\left(m ^\eps\right)}^2
		\,\leq\,
		\|\,\nu^\eps_{0}\,\|_{H^1_w\left(m ^\eps\right)}^2\,
		e^{
			% \left(
			-2\,\alpha_*\,
			I(t)
			% \right)
		}
		\left(
		1
		\,+\,
		C
		\int_0^t
		e^{Cs}\,
		e^{
			% \left(
			2\,\alpha_*\,
			I(s)
			% \right)
		}\,
		\dD s
		\right)
		\,,
		\right.\\[0,3em]
	\end{equation*}
	where $I$ is given by
	\[
	I(t)
	\,=\,
	\int_0^t
	\frac{1}{
		\left(
		\theta^\eps
		\right)^2
	}\,\dD s\,.
	\]
	Taking advantage of the ODE solved by $\theta^\eps$ (see \eqref{expression for theta eps}), we compute explicitly $I$
	\[
	I(t)\,=\,
	\frac{t}{\eps}
	\,+\,
	\frac{1}{2\,\rho_0^\eps}
	\ln{
		\left(\,
		\theta^\eps(t)^2\,
		\right)
	}\,.
	\]
	Consequently, the latter estimate rewrites
	\begin{equation*}
		\left.\\[0,3em]
		\|\,\nu^\eps_{\bot}\,\|_{L^2\left(m ^\eps\right)}^2
		\,\leq\,
		\|\,\nu^\eps_{0}\,\|_{H^1_w\left(m ^\eps\right)}^2\,
		e^{
			% \left(
			-2\,\alpha_*\,
			\frac{t}{\eps}}
		\left(\left(
		\theta^{\eps}(t)
		\right)^{
			-2\,\frac{\alpha_*}{\rho_0^\eps}
		}
		+
		C
		\int_0^t
		e^{Cs}\,
		e^{
			2\,\alpha_*\,
			\frac{s}{\eps}}
		\,
		\left(
		\frac{\theta^{\eps}(s)}{\theta^{\eps}(t)}
		\right)^{2\,
			\frac{\alpha_*}{\rho_0^\eps}
		}\,
		\dD s
		\right)
		\,.
		\right.\\[0,3em]
	\end{equation*}
	Then we notice that according to the explicit formula \eqref{expression for theta eps} for $\theta^\eps$, given that $\eps$ lies in 
	$(0\,,\,1)\,$, we have on the one hand
	\[
	e^{
		-2\,\alpha_*\,
		\frac{t}{\eps}}
	\,
	\left(
	\theta^{\eps}(t)
	\right)^{
		-2\,\frac{\alpha_*}{\rho_0^\eps}
	}
	\,\leq\,
	\min{
		\left(
		1\,,\,
		e^{
			-2\,\alpha_*\,
			\frac{t}{\eps}}\,
		\eps^{
			-\,
			\frac{\alpha_*}{\rho_0^\eps}}
		\right)
	}\,,
	\]
	and on the other hand
	\[
	\left(
	\frac{\theta^{\eps}(s)}
	{\theta^{\eps}(t)}
	\right)^{2\,
		\frac{\alpha_*}{\rho_0^\eps}
	}
	\,\leq\,
	\min{
		\left(
		2\,e^{
			2\,\alpha_*\,
			\frac{t-s}{\eps}}\,,\,
		C
		\left(1\,+\,
		e^{
			-2\,\alpha_*\,
			\frac{s}{\eps}}\,
		\eps^{
			-\,
			\frac{\alpha_*}{\rho_0^\eps}}
		\right)
		\right)
	}\,.
	\]
	We inject these bounds and take the supremum over all $\bx$ in $K$ in the latter estimate. In the end, we obtain the estimate for the case where $k=0$
	% item \eqref{item 1 nu eps orth}
	in Proposition \ref{estimee:nu orth}.\\
	
	We turn to the case $k=1$ in
	% item \eqref{item 2 nu eps orth}  
	Proposition \ref{estimee:nu orth}. We make use of the shorthand notation $\ds h^\eps_{\bot}
	\,=\,
	\partial_w\,\nu^\eps_{\bot}$.
	We compute the time derivative of 
	$
	\displaystyle
	\|\,h^\eps_{\bot}\,\|^2_{L^2\left(m ^\eps\right)}$ multiplying equation \eqref{h orth:eq} by 
	$
	\displaystyle 
	h^\eps_{\bot} \, m^\eps
	$ and integrating with respect to $\bu$\\
	\begin{equation*}
		\frac{1}{2}
		\frac{\dD}{\dD t}\,
		\|\,h^\eps_{\bot}\,\|_{L^2\left(m ^\eps\right)}^2
		\,=\,
		\left\langle\,
		\scS^\eps
		\left[\,
		h^\eps\,,\,
		\Pi\,h^\eps\,
		\right]\,,\,
		h^\eps_{\bot}\,
		\right\rangle_{L^2\left(m ^\eps\right)}
		\,-\,
		\left\langle\,
		\scA^\eps
		\left[\,
		h^\eps_{\bot}\,
		\right]\,,\,
		h^\eps_{\bot}\,
		\right\rangle_{L^2\left(m ^\eps\right)}
		\,+\,b\,
		\|\,h^\eps_{\bot}\,\|_{L^2\left(m ^\eps\right)}^2
		\,+\,\mathcal{A}
		\,,\\[0.7em]
	\end{equation*}
	where $\cA$ is given by\\
	\begin{equation*}
		\displaystyle  \cA \,=\,
		\frac{1}{\theta^\eps}
		\int_{\R^2}
		\partial_v\,
		\nu^\eps\,
		h^\eps_{\bot}\,m ^\eps\, \dD \bu\,
		.\\[0.5em]
	\end{equation*}
	We estimate $\cA$ integrating by part with respect to $v$ and applying Young's inequality.
	After applying Proposition \ref{estime L 2 m g eps}, it yields
	\begin{equation*}
		\displaystyle  \cA \,\leq\,
		\frac{1}{4\,\eta}\,e^{Ct}\,
		\|\,
		\nu^\eps_0\,
		\|_{L^2\left(m ^\eps\right)}^2
		+\,
		\frac{\eta}{
			\left(
			\theta^\eps
			\right)^2
		}\,
		\cD_{\rho_0^\eps}
		\left[\,
		h^\eps_{\bot}\,
		\right]
		\,
		,
	\end{equation*}
	for some positive constant $C$ and all positive $\eta$. Then we follow the same argument as in the last step and obtain the expected result.\\
\end{proof}

\subsubsection{
	Estimate for
	$
	\left(\,
	\ols{\nu}^\eps
	\,-\,
	\ols{\nu}\,
	\right)$}

It solves the following equation
\begin{equation}
	\label{bar nu eps bar nu:eq}
	\left.\\[0,3em]
	\displaystyle \partial_t \left(
	\,
	\ols{\nu}^\eps
	\,-\,
	\ols{\nu}\,
	\right)
	\,-\,b\,
	\partial_{w}
	\left[\,
	w\, \left(
	\,
	\ols{\nu}^\eps
	\,-\,
	\ols{\nu}\,
	\right)\,
	\right]\,=\,
	-a\,\theta^\eps\,\partial_{w}
	\left[\,
	\int_\R v\, \nu^\eps\,\dD v\,
	\right]\,,
	\right.\\[0,3em]
\end{equation} 
obtained taking the difference between equations \eqref{bar nu:eq} and \eqref{bar:nu eps:eq}.
It is the same equation as \eqref{bar nu:eq} solved by $\ols{\nu}$ with the additional source term on the right-hand side of the latter equation. Consequently, our strategy consists in estimating the source term. We point out that since the source term is weighted by $\theta^\eps$, it sufficient to prove that it is bounded in order to obtain convergence. However, it is not hard to check that the source term cancels if we replace $\nu^\eps$ by its projection $\Pi\,\nu^\eps$
\[
\left.\\[0,3em]
\partial_{w}
\left[\,
\int_\R v\, \Pi\,\nu^\eps\,\dD v\,
\right]\,=\,0\,.
\right.\\[0,3em]
\]
Consequently, based on the estimates obtained in the first step on $\nu^\eps_{\bot}$ (see Proposition \ref{estimee:nu orth}), we expect
\[
\left.
\theta^\eps\,\partial_{w}
\left[\,
\int_\R v\, \nu^\eps\,\dD v\,
\right]\,
\underset{\eps \rightarrow 0}{=}\,
O
\left(
\eps
\right).
\right.\\[0,3em]
\]
This formal approach was already rigorously justified in a weak convergence setting in \cite{BF}. In our setting and
due to the structure of the source term, we use regularity estimates to achieve the latter convergence rate.
\begin{lemma}\label{lemme technique nu bar}
	Consider a sequence of solutions $(\mu^\eps)_{\,\eps\,>\,0}$ to \eqref{kinetic:eq} with initial conditions satisfying assumption \eqref{hyp4:f0} with an index $k$ in $\ds\{0\,,\,1\}$
	% -\eqref{hyp5:f0} 
	as well as the solution $\ols{\nu}$ to equation \eqref{bar nu:eq} with some initial condition $\ols{\nu}_0$ lying in
	$
	\displaystyle
	\scH^{k}(\ols{m})
	$. The following estimate holds for all positive $\eps$
	\begin{equation*}
		\|\,
		\ols{\nu}^\eps
		\,-\,
		\ols{\nu}\,\|_{H^{k}(\ols{m})}
		\,\leq\,
		e^{C\,t}
		\left(
		\|\,
		\ols{\nu}^\eps_0
		\,-\,
		\ols{\nu}_0\,\|_{H^{k}(\ols{m})}
		\,+\,
		C\,
		\int_0^t
		\,e^{-C\,s}
		\,\theta^\eps\,
		\|\,\nu^\eps_{\bot}\|_{H^{k+1}_w\left(m ^\eps\right)}
		\,\dD s\,
		\right)
		, 
	\end{equation*}
	for all $(t,\bx)\in \R^+\times K$,
	where $k$ lies in $\{0\,,\,1\}$ and
	for some positive constant $C$ only depending on $\kappa$, $m_*$ and $A$.
\end{lemma}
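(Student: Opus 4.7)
The key observation is that the Maxwellian $\mathcal{M}_{\rho^\eps_0}$ has zero mean in $v$, so $\int_\R v\,\Pi\nu^\eps\,\dD v = \mathcal{M}_{\rho^\eps_0}$'s first moment multiplied by $\ols{\nu}^\eps$ vanishes. Consequently the source term in \eqref{bar nu eps bar nu:eq} rewrites as
\[
-\,a\,\theta^\eps\,\partial_w\!\left[\int_\R v\,\nu^\eps_\bot(t,\bx,v,w)\,\dD v\right],
\]
so that the smallness of the source is governed by $\nu^\eps_\bot$, which was controlled in Proposition \ref{estimee:nu orth}. The plan is to close an energy estimate in $H^k(\ols{m})$ on the difference $r^\eps := \ols{\nu}^\eps-\ols{\nu}$, treating this source as a forcing.

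For $k=0$, I would multiply \eqref{bar nu eps bar nu:eq} by $r^\eps\,\ols{m}$ and integrate in $w$. The transport term $-b\,\partial_w(w\,r^\eps)$ rewrites, after integration by parts and using $\partial_w\ols{m}=\kappa w\,\ols{m}$, as a combination of $\|r^\eps\|^2_{L^2(\ols{m})}$ and a quadratic term $\tfrac{b\,\kappa}{2}\int w^2 |r^\eps|^2 \ols{m}\,\dD w$; this last term has a sign (the choice of weight was designed for this) but at any rate can be absorbed using Young's inequality at the cost of a constant depending on $\kappa$. The source term, after Cauchy--Schwarz in $v$ against $\mathcal{M}_{\rho_0^\eps}$, gives
\[
\left|\int_\R v\,\partial_w\nu^\eps_\bot\,\dD v\right|^{2}\ols{m}(w)
\,\leq\,
C\,\int_\R |\partial_w\nu^\eps_\bot|^2\,m^\eps_{\bx}(v,w)\,\dD v,
\]
since $m^\eps_{\bx}\sim \ols{m}/\mathcal{M}_{\rho_0^\eps}$ (up to a factor depending on $\rho^\eps_0$, which is uniformly bounded thanks to \eqref{hyp:rho0}). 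Integrating in $w$ and applying Young's inequality, this contribution is bounded by $C\,\theta^\eps\,\|\nu^\eps_\bot\|_{H^1_w(m^\eps)}\,\|r^\eps\|_{L^2(\ols{m})}$, which is exactly the structure needed for Grönwall.

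For $k=1$, I would first differentiate \eqref{bar nu eps bar nu:eq} with respect to $w$, then multiply by $\partial_w r^\eps\,\ols{m}$ and integrate. The transport term produces an additional commutator $-b\,\partial_w r^\eps$ (from $\partial_w[w\,\partial_w r^\eps]-w\,\partial^2_w r^\eps$) which is absorbed directly in $\|\partial_w r^\eps\|^2_{L^2(\ols{m})}$. The source becomes $-a\theta^\eps\,\partial_w^2\int v\,\nu^\eps_\bot\,\dD v$; I do not integrate by parts once more and instead bound it by $\theta^\eps$ times the $L^2(\ols{m})$-norm of $\partial_w\int v\,\partial_w\nu^\eps_\bot\,\dD v$, which is controlled via Cauchy--Schwarz by $\|\partial_w^2\nu^\eps_\bot\|_{L^2(m^\eps)} \leq \|\nu^\eps_\bot\|_{H^2_w(m^\eps)}$.

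Combining the two cases yields the differential inequality
\[
\frac{\dD}{\dD t}\|r^\eps\|^2_{H^k(\ols{m})}
\,\leq\, C\,\|r^\eps\|^2_{H^k(\ols{m})}
\,+\,C\,\theta^\eps\,\|\nu^\eps_\bot\|_{H^{k+1}_w(m^\eps)}\,\|r^\eps\|_{H^k(\ols{m})},
\]
from which Grönwall's lemma and a standard manipulation (dividing by $\|r^\eps\|_{H^k(\ols{m})}$ or using the square-root trick) gives the claimed estimate, after taking the supremum over $\bx\in K$. The main technical nuisance, rather than any deep obstacle, is keeping track of the $\bx$-dependence of the weights $m^\eps_{\bx}$ and $\ols{m}$ and verifying that all constants indeed depend only on $\kappa$, $m_*$ and $A$ as stated.
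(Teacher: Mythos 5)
Your argument follows the same route as the paper's proof: multiply \eqref{bar nu eps bar nu:eq} by $(\ols{\nu}^\eps-\ols{\nu})\ols{m}$, integrate by parts in the transport term using $\partial_w\ols{m}=\kappa w\,\ols{m}$, reduce the source to $\nu^\eps_\bot$ via the zero first moment of the Maxwellian, control it by Cauchy--Schwarz using the factorization $m^\eps=\mathcal{M}_{\rho_0^\eps}^{-1}\ols{m}$, and conclude by Gr\"onwall; the case $k=1$ is handled identically after differentiating in $w$. One imprecision to flag: after the integration by parts the transport contribution is exactly $\tfrac{b}{2}\int(1-\kappa w^2)|r^\eps|^2\ols{m}\,\dD w$, i.e.\ the quadratic term carries a minus sign and is damping, so it is simply dropped (bounded above by $\tfrac{b}{2}\|r^\eps\|^2_{L^2(\ols{m})}$); your fallback phrase ``at any rate can be absorbed using Young's inequality'' would not be valid reasoning if the sign were positive, since Young's inequality cannot tame an unbounded factor $w^2$ against $\|r^\eps\|^2_{L^2(\ols{m})}$ alone --- the good sign of that term is essential, not optional.
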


\begin{proof}
	We start with the case $k=0$. We consider some $\eps\,>\,0$ and some $\displaystyle(t,\,\bx)$ in $\R_+\times K$; we omit the dependence with respect to $(t,\bx)$ when the context is clear. We compute the time derivative of 
	$
	\displaystyle
	\|\,
	\ols{\nu}^\eps
	\,-\,
	\ols{\nu}\,\|^2_{L^2(\ols{m})}$ multiplying equation \eqref{bar nu eps bar nu:eq} by 
	$
	\displaystyle 
	\left(
	\,
	\ols{\nu}^\eps
	\,-\,
	\ols{\nu}\,
	\right) \, \ols{m}
	$ and integrating with respect to $w$. We integrate by part the term associated to linear transport and end up with the following relation
	\begin{equation*}
		\frac{1}{2}
		\frac{\dD}{\dD t}\,
		\|\,
		\ols{\nu}^\eps
		\,-\,
		\ols{\nu}\,\|_{L^2(\ols{m})}^2
		=
		\frac{b}{2}\,\int_{\R}
		\left(\,1\,-\,
		\kappa\,w^2\,
		\right)
		\left|
		\,
		\ols{\nu}^\eps
		\,-\,
		\ols{\nu}\,
		\right|^2\,\ols{m}\, \dD w
		\,-\,
		a\,\theta^\eps\int_{\R^2}
		v\,
		\partial_w\,
		\nu^\eps_{\bot}
		\left(
		\ols{\nu}^\eps
		\,-\,
		\ols{\nu}\right)\,\ols{m}\, \dD \bu\,
		.
	\end{equation*}
	According to Cauchy-Schwarz inequality and applying assumption $\eqref{hyp:rho0}$, the source term admits the bound
	\[
	-\,
	a\,\theta^\eps\int_{\R^2}
	v\,
	\partial_w\,
	\nu^\eps_{\bot}
	\left(
	\ols{\nu}^\eps
	\,-\,
	\ols{\nu}\right)\,\ols{m}\, \dD \bu
	\,\leq\,C\,
	\theta^\eps\,
	\|\,h^\eps_{\bot}\,\|_{L^2\left(m ^\eps\right)}
	\,
	\|\,
	\ols{\nu}^\eps
	\,-\,
	\ols{\nu}\,\|_{L^2(\ols{m})}\,,
	\]
	for some positive constant $C$ only depending on $A$ and $m_*$. Furthermore, we bound the term associated to linear transport using that the polynomial 
	$
	\ds
	1\,-\,\kappa\,w^2
	$ is upper-bounded over $\R$. Gathering the former considerations we end up with the following differential inequality
	\begin{equation*}
		\frac{1}{2}\,
		\frac{\dD}{\dD t}\,
		\|\,
		\ols{\nu}^\eps
		\,-\,
		\ols{\nu}\,\|_{L^2(\ols{m})}^2
		\,\leq\,
		C\,
		\left(
		\|\,
		\ols{\nu}^\eps
		\,-\,
		\ols{\nu}\,\|_{L^2(\ols{m})}^2
		\,+\,
		\theta^\eps\,
		\|\,\partial_w\,
		\nu^\eps_{\bot}\,\|_{L^2\left(m ^\eps\right)}
		\,
		\|\,
		\ols{\nu}^\eps
		\,-\,
		\ols{\nu}\,\|_{L^2(\ols{m})}
		\right)
		,
	\end{equation*}
	for some positive constant $C$ only depending on $\kappa$, $m_*$ and $A$. we divide the latter inequality by
	$
	\|\,
	\ols{\nu}^\eps
	\,-\,
	\ols{\nu}\,\|_{L^2(\ols{m})}
	$ and obtain
	\begin{equation*}
		\frac{\dD}{\dD t}\,
		\|\,
		\ols{\nu}^\eps
		\,-\,
		\ols{\nu}\,\|_{L^2(\ols{m})}
		\,\leq\,
		C\,
		\left(
		\|\,
		\ols{\nu}^\eps
		\,-\,
		\ols{\nu}\,\|_{L^2(\ols{m})}
		\,+\,
		\theta^\eps\,
		\|\,\partial_w\,
		\nu^\eps_{\bot}\,\|_{L^2\left(m ^\eps\right)}
		\right)
		,
	\end{equation*}
	We conclude this step applying Gronwall's Lemma.\\
	
	We treat the case $k\,=\,1$ applying the same method. Indeed, 
	$\ds
	\,
	\ols{\nu}^\eps
	\,-\,
	\ols{\nu}\,
	$ and 
	$\ds
	\,
	\partial_w
	\left(
	\ols{\nu}^\eps
	\,-\,
	\ols{\nu}
	\right)\,
	$
	solve the same equation up to an additional source term which adds no difficulty.\\
\end{proof}
\begin{proposition}\label{estimee:bar nu bar nu eps}
	Under assumptions \eqref{hyp1:N}-\eqref{hyp2:N} and \eqref{hyp3:N} on
	the drift $N$ and \eqref{hyp2:psi} on the interaction kernel $\Psi$,
	consider a sequence of solutions $(\mu^\eps)_{\eps>0}$ to
	\eqref{kinetic:eq} with initial conditions satisfying assumptions
	\eqref{hyp:rho0}-\eqref{hyp2:f0} as well as the solution $\ols{\nu}$
	to equation \eqref{bar nu:eq} with some initial condition
	$\ols{\nu}_0$ and an exponent $\kappa$ greater than $1/(2b)$. Then
	there exists a constant $C>0$,  such that for all $\eps\in(0,1)$,
	the following statements hold
	\begin{enumerate}
		\item\label{item 1 cv bar nu eps} suppose that assumptions
		\eqref{hyp4:f0}-\eqref{hyp bar nu th2} are fulfilled  with an
		index $k$ in $\ds\{0\,,\,1\}$, then for all $t \geq0$,
		\begin{equation*}
			\|\,\ols{\nu}^\eps(t)\,-\,\ols{\nu}(t)\,\|_{\scH^{\,k}(\ols{m})}
			\,\leq\,
			e^{Ct}\,
			\left(
			\|\ols{\nu}^\eps_0\,-\,\ols{\nu}_0\|_{\scH^{\,k}(\ols{m})}
			\,+\,
			C\left\|\,
			\nu^\eps_0\,
			\right\|_
			{
				\scH^{\,k+1}
				\left(
				m^\eps
				\right)
			}\sqrt{\eps}\,
			\right)\,;
		\end{equation*}
		\item\label{item 2 cv bar nu eps}supposing assumption \eqref{hyp4:f0}
		with index $k\,=\,1$ and  assumption \eqref{hyp bar nu th2} with
		index $k\,=\,0$, it holds for all $t \geq0$,
		\begin{equation*}
			\|\,\ols{\nu}^\eps(t)\,-\,\ols{\nu}(t)\,\|_{\scH^{\,0}(\ols{m})}
			\,\leq\,
			e^{Ct}\,
			\left(
			\|\ols{\nu}^\eps_0\,-\,\ols{\nu}_0\|_{\scH^{\,0}(\ols{m})}
			\,+\,
			C\,\left\|\,
			\nu^\eps_0\,
			\right\|_
			{
				\scH^{\,2}\left(m ^\eps\right)
			}\eps\,\sqrt{
				\left|\,\ln{\eps}\,\right|
				\,+\,1
			}\,
			\right)\,.
		\end{equation*}
	\end{enumerate}
	%In this proposition the positive constant $C$ only depends on $\kappa$, $m_*$, $m_p$, $\ols{m}_p$ (see assumptions \eqref{hyp:rho0}, \eqref{hyp1:f0} and \eqref{hyp2:f0}) and the data of the problem:~$N$,~$A_0$~and~$\Psi$.
\end{proposition}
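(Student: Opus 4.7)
The strategy is to combine Lemma~\ref{lemme technique nu bar}, which converts the estimate on $\ols{\nu}^\eps - \ols{\nu}$ into a bound on the time integral $\int_0^t e^{-Cs}\theta^\eps(s)\|\nu^\eps_{\bot}(s)\|_{\scH^{k+1}(m^\eps)}\dD s$, with the regularity estimates of Section~\ref{Estimates for strong convergence}. The two items differ in how this integrand is controlled: item~\eqref{item 1 cv bar nu eps} relies on a cheap but crude bound, while item~\eqref{item 2 cv bar nu eps} exploits the smallness in $\eps$ furnished by Proposition~\ref{estimee:nu orth}, at the cost of one extra degree of regularity on the initial datum.

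For item~\eqref{item 1 cv bar nu eps}, I would bound $\|\nu^\eps_\bot\|_{\scH^{k+1}(m^\eps)}$ by triangle inequality: writing $\nu^\eps_\bot = \nu^\eps - \Pi \nu^\eps$ and noting that the projector $\Pi$ is continuous on $\scH^{k+1}(m^\eps)$ (which follows from the tensorised structure of $m^\eps$ together with an explicit Gaussian integration against $\cM_{\rho_0^\eps}$), Proposition~\ref{estime L 2 m g eps} yields $\|\nu^\eps_\bot(s)\|_{\scH^{k+1}(m^\eps)} \leq Ce^{Cs}\|\nu^\eps_0\|_{\scH^{k+1}(m^\eps)}$ as long as $k+1 \leq 2$, which covers both $k=0$ and $k=1$. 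Aligning the Gronwall constant in Lemma~\ref{lemme technique nu bar} so that the $e^{-Cs}$ factor inside the integral exactly cancels this growth reduces the problem to estimating $\int_0^t \theta^\eps(s)\dD s$. From the explicit formula \eqref{expression for theta eps} and \eqref{hyp:rho0}, we have $\theta^\eps(s) \leq \sqrt{\eps} + e^{-m_* s/\eps}$, so $\int_0^t \theta^\eps(s)\dD s \leq \sqrt{\eps}(t + 1/m_*)$. Absorbing the linear factor in $t$ into the $e^{Ct}$ prefactor yields the announced rate $\sqrt{\eps}$.

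For item~\eqref{item 2 cv bar nu eps}, the crude bound above would only produce $O(\sqrt{\eps})$, hence is insufficient. Instead I would invoke Proposition~\ref{estimee:nu orth} applied with index $k=1$, which leverages the $\scH^2$ regularity of $\nu^\eps_0$ and delivers the finer estimate
\[
\|\nu^\eps_\bot(s)\|_{\scH^1(m^\eps)} \leq C e^{Cs}\|\nu^\eps_0\|_{\scH^2(m^\eps)}\bigl(\sqrt{\eps} + \min\{1,\,e^{-\alpha_* s/\eps}\eps^{-\alpha_*/(2m_*)}\}\bigr).
\]
Once the $e^{Cs}$ is again compensated by the $e^{-Cs}$ from Lemma~\ref{lemme technique nu bar}, the $\sqrt{\eps}$ piece multiplied by $\theta^\eps$ integrates to $O(\eps(t+1))$. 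For the delicate $\min$ piece I would use Cauchy--Schwarz: direct integration of \eqref{expression for theta eps} gives $\int_0^t (\theta^\eps)^2\dD s \leq C\eps(t+1)$, while splitting at $T^\eps = \eps|\ln \eps|/(2m_*)$ (the threshold where the two branches of the $\min$ coincide) yields $\int_0^t \min\{1,\,e^{-\alpha_* s/\eps}\eps^{-\alpha_*/(2m_*)}\}^2\dD s \leq T^\eps + C\eps \leq C\eps(|\ln\eps|+1)$. Cauchy--Schwarz then produces the precise factor $\eps\sqrt{|\ln\eps|+1}$, which after absorbing the $\sqrt{t+1}$ into an $e^{Ct}$ matches exactly the announced bound.

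The main obstacle is the bookkeeping in item~\eqref{item 2 cv bar nu eps}: the constant $C$ in Lemma~\ref{lemme technique nu bar} and the Gronwall constant in Proposition~\ref{estimee:nu orth} must be matched so that the exponential cancellation is exact, lest a spurious time-growing factor contaminate the $\eps\sqrt{|\ln \eps|+1}$ rate; moreover, the Cauchy--Schwarz step is essential, because handling the $\min$ term by pointwise multiplication only would yield $\eps|\ln\eps|$ from the $[0,T^\eps]$ contribution alone, thereby losing the half power of the logarithm. The parameter $\alpha_*$ must also be kept within $(0, 1 - (2b\kappa)^{-1})$ so that Proposition~\ref{estimee:nu orth} is applicable, but its precise value plays no role beyond giving a finite constant in the final bound.
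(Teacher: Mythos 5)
Your proposal is correct and follows essentially the same route as the paper: for item~\eqref{item 1 cv bar nu eps}, combine Lemma~\ref{lemme technique nu bar} with the uniform-in-$\eps$ propagation of $\scH^{k+1}(m^\eps)$-regularity from Proposition~\ref{estime L 2 m g eps} (using that $\Pi$ is an orthogonal projection commuting with $\partial_w$, so $\|\nu^\eps_\bot\|\leq\|\nu^\eps\|$) and then integrate $\theta^\eps$ directly; for item~\eqref{item 2 cv bar nu eps}, feed the sharper bound of Proposition~\ref{estimee:nu orth} into the same lemma and apply Cauchy--Schwarz to the time integral, splitting at $T^\eps=\eps|\ln\eps|/(2m_*)$. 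The only cosmetic difference is that you apply Cauchy--Schwarz separately to the $\min$ piece rather than to the full $\sqrt{\eps}+\min$ bound as the paper does; both land on $\eps\sqrt{|\ln\eps|+1}$ after absorbing polynomial-in-$t$ factors into $e^{Ct}$, and the exact exponential cancellation you worry about is not actually needed, since any mismatch $e^{(C'-C)s}$ is bounded by $e^{(C'-C)t}$ and absorbed into the final prefactor.
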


\begin{proof}
	We prove item \eqref{item 2 cv bar nu eps} in the latter proposition. According to Lemma \ref{lemme technique L 2 m}, we have
	\begin{equation*}
		\|\,
		\ols{\nu}^\eps
		\,-\,
		\ols{\nu}\,\|_{L^{2}(\ols{m})}
		\,\leq\,
		e^{C\,t}
		\left(
		\|\,
		\ols{\nu}^\eps_0
		\,-\,
		\ols{\nu}_0\,\|_{L^{2}(\ols{m})}
		\,+\,C
		\int_0^t
		\,e^{-C\,s}
		\,\theta^\eps\,
		\|\,\nu^\eps_{\bot}\|_{H_w^{1}\left(m ^\eps\right)}
		\,\dD s\,
		\right)\,
		.
	\end{equation*}
	Therefore, the proof comes down to estimating the integral in the right-hand side of the latter inequality
	\begin{equation*}
		\cA\,:=\,
		\int_0^t
		\,e^{-C\,s}
		\,\theta^\eps\,
		\|\,\nu^\eps_{\bot}\|_{H^{1}_w\left(m ^\eps\right)}
		\,\dD s\,
		.
	\end{equation*}
	We apply the second estimate in Proposition \ref{estimee:nu orth} and Cauchy-Schwarz inequality. This yields
	\begin{equation*}
		\cA\,
		\leq\,
		C\,
		\left\|\,
		\nu^\eps_0\,
		\right\|_
		{
			H^{2}_{w}
			\left(
			m^\eps
			\right)
		}\,
		\left(
		\int_0^t
		\left|\theta^\eps\right|^2
		\,\dD s
		\right)^{1/2}
		\,
		\left(
		\int_0^t
		\eps
		\,+\,
		\min\left\{
		\,1\,,\,
		e^{
			\,
			-\,2\,\alpha_* \frac{s}{\eps}
		}
		\,
		\eps^{-\,\frac{\alpha_*}{m_*}}
		\right\}
		\,\dD s
		\right)^{1/2}
		\,
		.
	\end{equation*}
	Then we inject the following estimate in the latter inequality
	\[
	\min\left\{
	\,1\,,\,
	e^{
		\,
		-\,2\,\alpha_* s \,/\,\eps\,
	}
	\,
	\eps^{-\,\alpha_*\,/\,
		m_*}
	\right\}
	\,\leq\,
	\mathds{1}_{\,
		\left\{
		s
		\,\leq\,
		-\frac{1}{2m_*}\eps\ln{\eps}
		\right\}
	}
	\;+\;
	\mathds{1}_{\,
		\left\{
		s
		\,>\,
		-\frac{1}{2m_*}\eps\ln{\eps}
		\right\}
	}\;
	e^{
		\,
		-\,2\,\alpha_*\,\frac{s}{\eps}
	}
	\,
	\eps^{-\,\frac{\alpha_*}{m_*}
	}\,.
	\]
	Moreover, we use \eqref{expression for theta eps} to compute the time integral of $\ds\left|\theta^\eps\right|^2$. In the end, we obtain
	\begin{equation*}
		\cA\,
		\leq\,
		C\,
		\left\|\,
		\nu^\eps_0\,
		\right\|_
		{
			H^{2}_{w}
			\left(
			m^\eps
			\right)
		}\,\eps\,\sqrt{t\,+\,1}
		\,
		\sqrt{
			\left|\,\ln{\eps}\,\right|
			\,+\,1
		}
		\,
		.
	\end{equation*}
	Hence, we obtain the expected result taking the supremum over all $\bx$ in $K$ in the latter estimate.
	
	Item \eqref{item 1 cv bar nu eps} in Proposition \ref{estimee:bar nu bar nu eps} is obtained following the same method excepted that we estimate $\cA$ using Proposition \ref{estime L 2 m g eps} with index $k$ instead of Proposition \ref{estimee:nu orth}.\\
\end{proof}
Let us now conclude the proof of Theorem
\ref{th:2}. On the one hand, we observe that item
\eqref{item2 th21} corresponds to item \eqref{item 2 cv bar nu eps}
of Proposition \ref{estimee:bar nu bar nu eps}. On the other hand,
item \eqref{item 1 th2} is obtained by gathering the estimate in
Proposition \ref{estimee:nu orth} and item \eqref{item 1 cv bar nu
	eps} in Proposition \ref{estimee:bar nu bar nu eps}. 

\subsection{Proof of Theorem \ref{th21}}\label{proof 21}
All along this proof, we consider some $\eps_0$ small enough so that the following condition is fulfilled
\begin{equation*}
	\left\|\,\rho_0^\varepsilon
	\,-\,
	\rho_0\,
	\right\|_{L^\infty(K)}
	\,<\, m_*\,/\,2\,,
\end{equation*}
for all $\eps$ less than $\eps_0$. We omit the dependence with respect to $\ds(t,\bx,\bu) \in
\R^+\times K \times \R^2
$ when the context is clear.
We start by proving item \eqref{cv mu eps H 0} in Theorem \ref{th21}. Since the cases $k=0$ and $k=1$ are treated the same way, we only detail the case $k=0$.
We consider some integer $i$ and take some $\eps$ less than $\eps_0$. Then we decompose the error as follows
\[
\|\left(
v\,-\,\mathcal{V}
\right)^{i}
\,
\left(
\mu^\eps
\,-\,
\mu
\right)(t)
\|_{\mathcal{H}^0
	\left(
	m^-
	\right)}
\,\leq\,
\cA
\,+\,
\cB\,,
\]
where $\cA$ and $\cB$ are given by\\[0.3em]
\begin{equation*}
	\left\{
	\begin{array}{l}
		\displaystyle  \cA \,=\,
		\|
		(v\,-\,\cV)^{i}
		\,
		\left(
		\mu^\eps
		\,-\,
		\tau_{-\,
			\mathcal{U}^\eps
		}\,\circ\,
		D_{\theta^\eps}
		\left(\,
		\nu\,
		\right)
		\right)
		% \mu^\eps_{\mathrm{int}}
		\,
		\|_{\mathcal{H}^0
			\left(
			m^-
			\right)}
		\,
		,\\[1.1em]
		\displaystyle \cB
		\,=\,\|
		(v\,-\,\cV)^{i}
		\,
		\left(
		\,
		\tau_{-\,
			\mathcal{U}^\eps
		}\,\circ\,
		D_{\theta^\eps}
		\left(\,
		\nu\,
		\right)
		% \mu^\eps_{\mathrm{int}}
		\,-\,
		\mu
		\right)
		\,
		\|_{\mathcal{H}^0
			\left(
			m^-
			\right)
		}
		\,,
	\end{array}
	\right.\\[0.5em]
\end{equation*}
where $\nu$ is the limit of $\nu^\eps$ in Theorem \ref{th:2} and is defined by \eqref{limit nu} and where the operators 
$
\ds
\tau_{-\,
	\mathcal{U}^\eps
}
$ and 
$
\ds
D_{\theta^\eps}
$
respectively stand for the translation of vector 
$
\ds-\,
\mathcal{U}^\eps
$ with respect to the $\bu$-variable and the dilatation with parameter 
$
\ds
\left(\theta^\eps
\right)^{-1}
$ with respect to the $v$-variable, that is 
\[
\tau_{-\,
	\mathcal{U}^\eps
}\,\circ\,
D_{\theta^\eps}
\left(\,
\nu\,
\right)
\left(
t\,,\,\bx\,,\,\bu
\right)
\,=\,
\frac{1}{
	\theta
	^\eps
}\,
\nu
\left(
t\,,\,\bx\,,\,
\frac{
	v\,
	-\,
	\mathcal{V}^\eps
}{
	\theta
	^\eps
}
\,,\,
w\,
-\,
\mathcal{W}^\eps
\right)
% \mathcal{M}_{
	% \rho_0^\eps\,
	% \left|
	% \theta^\eps
	% \right|^{-2}
	% }
% \left(
% v\,-\,\cV^\eps
% \right)
% \otimes
% \ols{\nu}
% \left(
% t\,,\,\bx\,,\,
% w\,-\,\cW^\eps
% \right)
\,.
\]
% and where 
% $\ds
% \mu^\eps_{\mathrm{int}}
% $ is defined by
% \[
% \mu^\eps_{\mathrm{int}}
% \left(
% t\,,\,\bx\,,\,\bu
% \right)
% \,=\,
% \mathcal{M}_{
	% \rho_0^\eps\,
	% \left|
	% \theta^\eps
	% \right|^{-2}
	% }
% \left(
% v\,-\,\cV^\eps
% \right)
% \otimes
% \ols{\nu}
% \left(
% t\,,\,\bx\,,\,
% w\,-\,\cW^\eps
% \right)
% \,.
% \]
The first term $\cA$ corresponds to the convergence of the re-scaled version $\nu^\eps$ of $\mu^\eps$ towards $\nu$ whereas $\cB$ corresponds to the convergence of the macroscopic quantities.\\

We estimate $\cA$ as follows
\[
\cA
\,\leq\,
C
\left(\,
\cA_1
\,+\,
\cA_2
\right)\,,
\]
where $C$ is a positive constant which only depends on $i$ and where $\cA_1$ and $\cA_2$ are given by
\begin{equation*}
	\left\{
	\begin{array}{l}
		\displaystyle \cA_1
		\,=\,
		\left\|\,
		\left(
		v
		\,-\,
		\cV^\eps
		\right)^i
		\,
		\left(
		\,
		\mu^\eps
		\,-\,
		\tau_{-\,
			\mathcal{U}^\eps
		}\,\circ\,
		D_{\theta^\eps}
		\left(\,
		\nu\,
		\right)
		% \mu^\eps_{\mathrm{int}}
		\,
		\right)\,
		\right\|_{\mathcal{H}^0
			\left(
			m^-
			\right)}
		\,,\\[1.1em]
		\displaystyle  \cA_2 \,=\,
		\left\|
		\,
		\cV
		\,-\,
		\cV^\eps
		\,
		\right\|_{L^{\infty}
			\left(
			K
			\right)}^{i}\,
		\left\|
		\,
		\mu^\eps
		\,-\,
		\tau_{-\,
			\mathcal{U}^\eps
		}\,\circ\,
		D_{\theta^\eps}
		\left(\,
		\nu\,
		\right)
		% \mu^\eps_{\mathrm{int}}
		\,
		\right\|_{\mathcal{H}^0
			\left(
			m^-
			\right)}
		\,
		.
	\end{array}
	\right.\\[0.7em]
\end{equation*}
%We estimate $\cA_1$.
According to item \eqref{estimate moment mu} in Proposition \ref{th:preliminary} , $\mathcal{V}^\eps$ and $\mathcal{W}^\eps$ are
uniformly bounded with respect to both $(t,\bx) \in \R^+\times K$ and
$\eps\,>\,0$, and $0\leq \theta^\eps\leq 1$,  hence
$$
m^-(\bx,v, w) \, \leq\, C\, m^\eps
\left(
\bx\,,\,
\frac{v\,-\,\cV^\eps}{
	\sqrt{2}\,\theta^\eps
}\,,\,
w\,-\,\cW^\eps
\right),
$$
which yields
\[
\cA_1
\,\leq\,
C\,
\sup_{\bx \in K}
\left(\,
\int_{\R^2}
\left(
v
\,-\,
\cV^\eps
\right)^{2\,i}
\,
\left|
\,
\mu^\eps
\,-\,
\tau_{-\,
	\mathcal{U}^\eps
}\,\circ\,
D_{\theta^\eps}
\left(\,
\nu\,
\right)
\,
\right|^{2}\,
m^\eps
\left(
\bx\,,\,
\frac{v\,-\,\cV^\eps}{
	\sqrt{2}\,\theta^\eps
}\,,\,
w\,-\,\cW^\eps
\right)
\,\dD \bu
\right)^{\frac{1}{2}}
\,,
\]
for another constant $C>0$ depending only on $\kappa$, $m_*$, $m_p$ and $\ols{m}_p$ (see assumptions \eqref{hyp:rho0}-\eqref{hyp2:f0}) and on the data of the problem $N$,~$\Psi$~and~$A_0$. Then we invert the change of variable \eqref{change:var} and notice that
\[
v^{2\,i}\,
m^\eps
\left(
\bx\,,\,
\frac{v}{
	\sqrt{2}
}\,,\,
w
\right)
\,\leq\,
C\,
m^\eps
\left(
\bx\,,\,
\bu
\right)\,,
\]
for some constant $C>0$ only depending on $i$ and $m_*$. Consequently, we deduce
\[
\cA_1
\,\leq\,
C\,
\left\|\,
\left(
\theta^\eps
\right)^{i\,-\,\frac{1}{2}}\,
\right\|_{L^{\infty}
	\left(
	K
	\right)}\,
\left\|\,
\nu^\eps
\,-\,
\nu\,
\right\|_{\mathcal{H}^0
	\left(
	m^\eps
	\right)}\,.
\]
Therefore, applying Theorem \ref{th:2}, using the compatibility
assumption \eqref{th:2 compatibility assumption 1}, and thanks to the constraint 
$
\ds
\theta^\eps
\left(
t=0
\right)\,=\,1 
$, which ensures
\[
\left\|\,
\nu^\eps_{0}\,
\right\|_
{
	\scH^{1}
	\left(
	m ^\eps
	\right)
}
\,\leq\,
C\,
\left\|\,
\mu^\eps_{0}\,
\right\|_
{
	\scH^{1}
	\left(
	m^+
	\right)
}\,,
\]
for some constant $C$ depending only on $m_p$, $\kappa$ and $m_*$, we
finally get
\[
\left.
\cA_1
\,\leq\,
C\,e^{Ct}\,\eps^{-\frac{1}{4}}
\left(
\eps^{\frac{i}{2}}\,+\,
e^{-\,i\,m_*\,\frac{t}{\eps}}
\right)\,
\left(
\eps^{\frac{1}{2}}
\,+\,
e^{
	\,
	-\,\alpha_*\frac{t}{\eps}\,
}
\,
\eps^{-\,\frac{\alpha_*}{2\,m_*}
}\,
\right)
\,.
\right.%\\[0.em]
\]
Moreover, since $
\ds \alpha_*\,<\,m_*\,/\,2\,
$, we deduce
\[
\left.
\cA_1
\,\leq\,
C\,e^{Ct}\,
\left(\,
\eps^{\frac{i}{2}\,+\,\frac{1}{4}}
\,+\,
e^{
	\,
	-\,\alpha_*\frac{t}{\eps}\,
}
\,
\eps^{-\,\frac{1}{2}}\,
\right)
\,.
\right.\\[0.7em]
\]
To estimate $\cA_2$, we  apply item \eqref{cv macro q} in Proposition \ref{th:preliminary} and the compatibility assumption \eqref{th:2 compatibility assumption 1}, which ensure
\[
\left\|
\,
\cV
\,-\,
\cV^\eps
\,
\right\|_{L^{\infty}
	\left(
	K
	\right)}^{i}
\,\leq\,
C\,
e^{C\,t}
\,\eps^{i}\,.
\]
Then we follow the same method as before. In the end, we end up with the following bound for $\cA_2$
\[
\left.
\cA_2
\,\leq\,
C\,e^{Ct}\,
\left(\,
\eps^{i\,+\,\frac{1}{4}}
\,+\,
e^{
	\,
	-\,\alpha_*\frac{t}{\eps}\,
}
\,
\eps^{i\,-\,\frac{1}{2}}\,
\right)
\,.
\right.\\[0.7em]
\]
Gathering these results, we obtain the following estimate for $\cA$
\[
\left.
\cA
\,\leq\,
C\,e^{Ct}\,
\left(\,
\eps^{\frac{i}{2}\,+\,\frac{1}{4}}
\,+\,
e^{
	\,
	-\,\alpha_*\frac{t}{\eps}\,
}
\,
\eps^{-\,\frac{1}{2}}\,
\right)
\,.
\right.\\[0.7em]
\]

We turn to $\cB$. Similarly as before, we apply the triangular inequality and invert the change of variable \eqref{change:var}. 
% In the end, we obtain
% \[
% \cB
% \,\leq\,
% C
% \left(
% \,
% \left\|\,
% \left(
% \theta^\eps
% \right)^{i-\frac{1}{2}}\,
% \right\|_{L^{\infty}
	% \left(
	% K
	% \right)}\,
% +\,
% \left\|
% \,
% \cV
% \,-\,
% \cV^\eps
% \,
% \right\|_{L^{\infty}
	% \left(
	% K
	% \right)}^{i}\,
% \left\|\,
% \left(
% \theta^\eps
% \right)^{-\frac{1}{2}}\,
% \right\|_{L^{\infty}
	% \left(
	% K
	% \right)}\,
% \right)
% \,
% \left\|\,
% \nu
% % \left(
% % t\,,\,\bx\,,\,\bu
% % \right)
% \,-\,
% \tau_{\,
	% \left(\,
	% \mathcal{U}^\eps
	% -\,
	% \mathcal{U}\,
	% \right)
	% }\,
% \left(
% \cM_{\rho_0}\otimes \bar{\nu}\,
% \right)
% % \left(
% % t\,,\,\bx\,,\,\bu
% % \right)
% \,
% \right\|_{\mathcal{H}^0
	% \left(
	% m^\eps
	% \right)}\,.
% \]
Then we apply Proposition \ref{th:preliminary}, which yields
\[
\cB
\,\leq\,
C\,e^{C\,t}\eps^{-\frac{1}{4}}
\left(
\,\eps^{\frac{i}{2}}\,
\,+\,
e^{-\,i\,m_*\,\frac{t}{\eps}}
\right)
\,
\left\|\,
\nu
% \left(
% t\,,\,\bx\,,\,\bu
% \right)
\,-\,
\tau_{\,
	\left(\,
	\frac{
		\mathcal{V}^\eps
		-\,
		\mathcal{V}
	}{\theta^\eps}\,,\,
	\mathcal{W}^\eps
	-\,
	\mathcal{W}\,
	\right)
	% \left(\,
	% \mathcal{U}^\eps
	% -\,
	% \mathcal{U}\,
	% \right)
}\,
\left(
\cM_{\rho_0}\otimes \bar{\nu}\,
\right)
% \left(
% t\,,\,\bx\,,\,\bu
% \right)
\,
\right\|_{\mathcal{H}^0
	\left(
	D_{\sqrt{2}}\,\,
	\left(m^\eps\right)
	\right)}\,,
\]
where 
$
\ds D_{\sqrt{2}}\,\,
\left(m^\eps\right)
$ is a short-hand notation for
\[
D_{\sqrt{2}}\,
\left(m^\eps\right)
\left(
\bx\,,\,\bu
\right)
\,=\,
m^\eps
\left(
\bx\,,\,
\frac{v}{\sqrt{2}}\,,\,
w
\right)\,.
\]
Then we decompose the right-hand side of the latter inequality as follows
\[
\left\|\,
\nu
\,-\,
\tau_{
	\left(\,
	\frac{
		\mathcal{V}^\eps
		-\,
		\mathcal{V}
	}{\theta^\eps}\,,\,
	\mathcal{W}^\eps
	-\,
	\mathcal{W}\,
	\right)
	% \,
	% \left(\,
	% \mathcal{U}^\eps
	% -\,
	% \mathcal{U}\,
	% \right)
}\,
\left(
\cM_{\rho_0}\otimes \bar{\nu}\,
\right)
\,
\right\|_{\mathcal{H}^0
	\left(
	D_{\sqrt{2}}\,\,
	\left(m^\eps\right)
	\right)}
\,\leq\,
\cB_1
\,+\,
\cB_2
\,+\,
\cB_3
\,,
\]
where $\cB_1$,
$
\cB_2$
and
$\cB_3$ are given by
\begin{equation*}
	\left\{
	\begin{array}{l}
		\displaystyle \cB_1
		\,=\,
		\left\|\,
		\ols{\nu}
		\,-\,
		\tau_{\,
			\left(\,
			\mathcal{W}^\eps
			-\,
			\mathcal{W}\,
			\right)
		}\,
		\bar{\nu}
		\,
		\right\|_{\mathcal{H}^0
			\left(
			\ols{m}
			\right)}
		\,,\\[1.1em]
		\displaystyle  \cB_2 \,=\,
		\left\|\,
		\tau_{\,
			\left(\,
			\mathcal{W}^\eps
			-\,
			\mathcal{W}\,
			\right)
		}\,
		\bar{\nu}
		\,
		\right\|_{\mathcal{H}^0
			\left(
			\ols{m}
			\right)}\,
		\left\|\,
		\cM_{\rho^\eps_0}
		\,-\,
		\tau_{
			\left(\,
			\frac{
				\mathcal{V}^\eps
				-\,
				\mathcal{V}
			}{\theta^\eps}\,
			\right)
			% \,
			% \left(\,
			% \mathcal{V}^\eps
			% -\,
			% \mathcal{V}\,
			% \right)
		}\,
		\cM_{\rho^\eps_0}
		\,
		\right\|_{\mathcal{H}^0
			\left(
			\cM_{\rho^\eps_0}^{-1}
			\right)}
		\,,\\[1.6em]
		\displaystyle  \cB_3 \,=\,
		\left\|\,
		\tau_{\,
			\left(\,
			\mathcal{W}^\eps
			-\,
			\mathcal{W}\,
			\right)
		}\,
		\bar{\nu}
		\,
		\right\|_{\mathcal{H}^0
			\left(
			\ols{m}
			\right)}\,
		\left\|\,
		% \tau_{
			% \left(\,
			% \frac{
				% \mathcal{V}^\eps
				% -\,
				% \mathcal{V}
				% }{\theta^\eps}\,
			% \right)}
		% \,
		% \left(\,
		% \mathcal{V}^\eps
		% -\,
		% \mathcal{V}\,
		% \right)
		% \left(\,
		\cM_{\rho^\eps_0}
		\,-\,
		\cM_{\rho_0}
		\,
		% \right)
		\right\|_{\mathcal{H}^0
			\left(
			% D_{\sqrt{2}}\,
			\cM_{\rho^\eps_0}^{-1}
			\right)}\,
		e^{\,
			\|
			\mathcal{V}^\eps
			-\,
			\mathcal{V}
			\|^2_{L^{\infty}(K)}
			\,/\,
			\left(2\,m_*\,\eps\right)\,
		}
		\,,
	\end{array}
	\right.\\[0.3em]
\end{equation*}
where we used that
\[
D_{\sqrt{2}}\left(m^\eps\right)\,\leq\,m^\eps\,,\quad\textrm{and}\quad m^\eps\,=\,\cM_{\rho^\eps_0}^{-1}\,\ols{m}\,.
\]
Since equation \eqref{bar nu:eq} is linear, 
$\ds
\ols{\nu}
\,-\,
\tau_{\,
	w_0}\,\ols{\nu}
$ also solves the equation, therefore, applying Lemma \ref{lemme:bar nu} with $w_0\,=\,
\mathcal{W}^\eps
-\,
\mathcal{W}
$, it yields
\[
\cB_1
\,\leq\,
C\,
e^{\frac{b}{2}\,t}\,
\left\|\,
\ols{\nu}_0
\,-\,
\tau_{\,
	\left(\,
	\mathcal{W}^\eps
	-\,
	\mathcal{W}\,
	\right)
}\,
\bar{\nu}_0
\,
\right\|_{\mathcal{H}^0
	\left(
	\ols{m}
	\right)}
\,.
\]
Furthermore, since 
$\ols{m}
\,\leq\,\ols{m}^+
$ and relying on assumption \eqref{continuite L 2 bar mu}, we deduce
\[
\cB_1
\,\leq\,
C\,
e^{\frac{b}{2}\,t}\,
\left\|\,
\mathcal{W}^\eps
-\,
\mathcal{W}
\,
\right\|_{L^{\infty}(K)}\,.
\]
Therefore, according to item \eqref{cv macro q} in Proposition \ref{th:preliminary} we conclude
\[
\cB_1\,\leq\,
C\,
e^{C\,t}\,
\eps\,.
\]
To estimate $\cB_2$ and $\cB_3$, we follow the same method as for $\cB_1$: we first apply the following relation
\[
\left\|\,
\cM_{\rho^\eps_0}
\,-\,
\tau_{
	\left(\,
	\frac{
		\mathcal{V}^\eps
		-\,
		\mathcal{V}
	}{\theta^\eps}\,
	\right)
	% \,
	% \left(\,
	% \mathcal{V}^\eps
	% -\,
	% \mathcal{V}\,
	% \right)
}\,
\cM_{\rho^\eps_0}
\,
\right\|_{\mathcal{H}^0
	\left(
	\cM_{\rho^\eps_0}^{-1}
	\right)}
\,=\,
\left\|\,
e^{\,\rho_0^\eps\,
	\left|\,
	\frac{
		\mathcal{V}^\eps
		-\,
		\mathcal{V}
	}{\theta^\eps}\,
	\right|^2
}
\,-\,1
\,
\right\|^{\frac{1}{2}}_{L^{\infty}(K)}
\,,
\]
which ensures 
\[
\left\|\,
\cM_{\rho^\eps_0}
\,-\,
\tau_{
	\left(\,
	\frac{
		\mathcal{V}^\eps
		-\,
		\mathcal{V}
	}{\theta^\eps}\,
	\right)
	% \,
	% \left(\,
	% \mathcal{V}^\eps
	% -\,
	% \mathcal{V}\,
	% \right)
}\,
\cM_{\rho^\eps_0}
\,
\right\|_{\mathcal{H}^0
	\left(
	\cM_{\rho^\eps_0}^{-1}
	\right)}
\,\leq\,
\left\|\,
e^{\,\rho_0^\eps\,
	\left|\,
	\frac{
		\mathcal{V}^\eps
		-\,
		\mathcal{V}
	}{\theta^\eps}\,
	\right|^2
}
\,\rho_0^\eps\,
\left|\,
\frac{
	\mathcal{V}^\eps
	-\,
	\mathcal{V}
}{\theta^\eps}\,
\right|^2
\right\|^{\frac{1}{2}}_{L^{\infty}(K)}
\,.
\]
Furthermore, we apply Lemma \ref{lemme:bar nu}, which ensures that
\[
\left\|\,
\tau_{\,
	\left(\,
	\mathcal{W}^\eps
	-\,
	\mathcal{W}\,
	\right)
}\,
\bar{\nu}
\,
\right\|_{\mathcal{H}^0
	\left(
	\ols{m}
	\right)}
\,\leq\,
C\,e^{C\,t}\,.
\]
Therefore, applying item \eqref{cv macro q} in Proposition \ref{th:preliminary}, we obtain
\[
\cB_2
\,\leq\,
C\,e^{C\,
	\left(t\,+\,
	\eps\,e^{Ct}\right) 
}
\,\eps^{\frac{1}{2}}
\,.
\]
Then to estimate $\cB_3$, an exact computation yields
\[
\left\|\,
\cM_{\rho^\eps_0}
\,-\,
\cM_{\rho_0}\,
\right\|_{\mathcal{H}^0
	\left(
	\cM_{\rho^\eps_0}^{-1}
	\right)}
\,=\,
\sup_{\bx \in K}
\left(
\frac{
	\left|\,
	\rho_0
	\,-\,
	\rho_0^\eps
	\,\right|^2
}{
	\,
	\sqrt{\rho_0^2
		-
		\left(
		\rho_0
		-
		\rho_0^\eps
		\right)^2
	}
	\left(
	\rho_0
	\,+\,
	\sqrt{\rho_0^2
		-
		\left(
		\rho_0
		-
		\rho_0^\eps
		\right)^2
	}
	\right)
}
\,\right)^{\frac{1}{2}}\,.
\]
Therefore, according to assumption \eqref{th:2 compatibility assumption 1} and item \eqref{cv macro q} in Proposition \ref{th:preliminary}, which ensures that
\[
e^{\,
	\|
	\mathcal{V}^\eps
	-\,
	\mathcal{V}
	\|^2_{L^{\infty}(K)}
	\,/\,
	\left(2\,m_*\,\eps\right)\,
}
\,\leq\,
e^{\,C\,e^{Ct}\,
	\eps
}\,,
\] 
this yields
\[
\cB_3\,\leq\,C\,e^{C\,
	\left(t
	\,+\,
	e^{Ct}\,
	\eps\right)}\,\eps\,.
\]
In the end, we deduce the following estimate for $\cB$
\[
\cB
\,\leq\,
C\,e^{C\,
	\left(
	t
	\,+\,
	\eps\,e^{Ct}
	\right)
}
\left(
\,\eps^{\frac{i}{2}
	+
	\frac{1}{4}}\,
\,+\,
e^{-\,i\,m_*\,\frac{t}{\eps}}
\,\eps^{\frac{1}{4}}
\right)
\,.
\]

The proof for the statement \eqref{item 2 th 22} in Theorem \ref{th21} follows the same lines as the former one excepted that we apply item \eqref{item2 th21} in Theorem \ref{th:2} instead of item \eqref{item 1 th2} to quantify the convergence of $\ols{\nu}^\eps$ towards $\ols{\nu}$. Therefore, we do not detail the proof.\\

\section{Conclusion}
This article highlights how macroscopic behavior arise in spatially extended FitzHugh-Nagumo neural networks. In the regime where strong short-range interactions dominate, we proved that the voltage distribution concentrates with Gaussian profile around an averaged value $\cV$ whereas the adaptation variable converges towards a distribution $\bar{\mu}$. The limiting quantities $\left(\cV,\bar{\mu}\right)$ solve the coupled reaction diffusion-transport system \eqref{macro:eq}. The novelty of this work is that we derive quantitative estimates ensuring strong convergence towards the Gaussian profile. More precisely, we provide two results. On the one hand, we prove convergence in a $L^1$ functional framework. Our analysis relies on a modified Boltzmann entropy (see Appendix \ref{sec:rel ent}) which is original to our knowledge. On the other hand, we prove convergence in a weighted $L^2$ setting, in which we take advantage of the variational structure to obtain regularity estimates and recover optimal convergence rates. These results complement collaborations of the author with E. Bouin and F. Filbet dedicated to the quantitative analysis of the strong short-range interaction regime (see \cite{BF} for weak convergence estimates and \cite{Blaustein_Bouin} for uniform convergence estimates). \\

A natural continuation of this work concerns the link between equation \eqref{kinetic:eq} and other popular models in neuroscience. More precisely, it is of primary interest to understand how the FitzHugh-Nagumo model relates to models based on "forced action potential" such as integrate and fire \cite{CCP,Carrillo_Gonzalez_Mar_Gualdani13,CPSS,Delarue_Inglis_Rubenthaler_Tanre15,roux,Roux_Salort_21}, voltage-conductance \cite{Perthame/Salort13,Dou_Perthame_Salort_Zhou23} and time elapsed  \cite{PPS,CCDR,CHE1,Mischler_Weng18} neural models. Indeed, Hodgkin-Huxley and FitzHugh-Nagumo neural models reproduce the spiking behavior of neurons thanks to an autonomous system of ordinary differential equations whereas their  "forced action potential" counterparts artificially enforce the spiking behavior. Therefore, the next step of our investigation is to derive "forced action potential" models as (biologically relevant) asymptotic limits of FitzHugh-Nagumo or Hodgkin-Huxley networks.\\

To conclude, our model displays structural similarities with others coming from kinetic theory such as flocking models \cite{FK,KV,KK,PS}, Vlasov-Navier-Stokes models with Brinkman force \cite{Daniel} and Vlasov-Poisson-Fokker-Planck models \cite{VPFP,El Ghani/ Masmoudi,Herda}. Hence, a natural question concerns the applicability of our methods in this wider context. We partially answered this question in \cite{Blaustein_VPFP} by applying the strategy explained at the beginning of Section \ref{sec1b} to treat the diffusive limit of the Vlasov-Poisson-Fokker-Planck model. Closer to applications in Biology, we also address the applicability of our approach to the fast adaptation regime in the run and tumble model for bacterial motion, analyzed by B. Perthame \textit{et al.} in \cite{Perthame_Tang_Vauchelet}.
\section*{Acknowledgment}
The author warmly thanks Francis Filbet whose recommendations and advises helped the completion of this work.\\
The author gratefully acknowledges the support of  ANITI (Artificial and Natural Intelligence Toulouse Institute). This project has received support from ANR ChaMaNe No: ANR-19-CE40-0024.

\appendix

\section{Proof of Lemma \ref{abstract lemma rel ent}
}\label{sec:rel ent}
Instead of estimating directly the $L^1$ norm of $f-g$, we estimate the following intermediate quantity, introduced in \cite{BF}
\begin{equation*}
	H_{1/2}\left[\,f\, |\,g\,\right]\,=\,\int_{\R^{d_1+d_2}}
	f
	\,\ln{
		\left(
		\frac{
			2\,f
		}{
			f + g
		}
		\right)
	}
	\,\dD\by \,\dD \xi \,, 
\end{equation*}
which is easily comparable to the classical $L^1$ norm thanks to the following Lemma (see \cite{BF}, Lemma $\mathrm{A}.1.$ for detailed proof)
\begin{lemma}
	\label{entropy V.S. L1}
	For any two non-negative functions
	$f$,  $g \in L^1\left(\R^{d_1+d_2}\right)$ with integral equal to one, the following estimate holds
	\begin{equation*}
		\frac{1}{8}
		\left\|\,
		f\,-\,
		g\,
		\right\|_{L^1
			\left(
			\R^{d_1+d_2}
			\right)}^2
		\,\leq\,
		H_{1/2}
		\left[\,
		f\,|\,
		g\,
		\right]
		\,\leq\,
		\left\|\,
		f
		\,-\,
		g\,
		\right\|_{L^1(\R^{d_1+d_2})}\,.
	\end{equation*}
\end{lemma}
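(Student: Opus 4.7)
I will prove the two inequalities separately, exploiting the key observation that $H_{1/2}[f\,|\,g] = H[f\,|\,m]$ where $m = (f+g)/2$ is itself a probability density (since $\int f = \int g = 1$ implies $\int m = 1$). The upper bound follows from a pointwise concavity estimate, while the lower bound reduces to the classical Pinsker inequality applied to $f$ and the arithmetic mean $m$.

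\textbf{Upper bound.} The plan is to use the elementary inequality $\ln(1+x) \leq x$ for $x > -1$ pointwise. Writing
\[
\frac{2f}{f+g} \,=\, 1 \,+\, \frac{f-g}{f+g}\,,
\]
and noting that $(f-g)/(f+g) \geq -1$ wherever $f+g>0$, this gives
\[
f \,\ln\!\left(\frac{2f}{f+g}\right) \,\leq\, \frac{f(f-g)}{f+g}\,.
\]
On the set $\{f \geq g\}$, one has $f/(f+g) \leq 1$, so the right-hand side is bounded by $f-g = (f-g)_+$; on $\{f < g\}$, the left-hand side is already nonpositive. Integrating and using $\int (f-g)_+ = \frac{1}{2}\|f-g\|_{L^1}$ (since $\int (f-g)=0$) yields $H_{1/2}[f\,|\,g] \leq \frac{1}{2}\|f-g\|_{L^1} \leq \|f-g\|_{L^1}$.

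\textbf{Lower bound.} Here I would set $m := (f+g)/2$, which is a probability density. By direct inspection,
\[
H_{1/2}\!\left[f\,|\,g\right] \,=\, \int_{\R^{d_1+d_2}} f \,\ln\!\left(\frac{f}{m}\right) \dD\by\,\dD\xi \,=\, H[f\,|\,m]\,,
\]
in the sense of the classical Kullback--Leibler divergence between two probability densities. The classical Csisz\'ar--Kullback--Pinsker inequality then gives
\[
\|f - m\|_{L^1\left(\R^{d_1+d_2}\right)}^{\,2} \,\leq\, 2\,H[f\,|\,m]\,.
\]
Since $f - m = (f-g)/2$, this reads $\tfrac{1}{4}\|f-g\|_{L^1}^{\,2} \leq 2\,H_{1/2}[f\,|\,g]$, i.e., $\tfrac{1}{8}\|f-g\|_{L^1}^{\,2} \leq H_{1/2}[f\,|\,g]$, as desired.

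\textbf{Main obstacle.} There is no serious difficulty: the only delicate point is the symmetrization observation that $m = (f+g)/2$ is a probability density, which is what allows the reduction to the classical Pinsker inequality. Both bounds rely on standard pointwise estimates for $x \mapsto x\ln x$ and require no additional regularity or decay assumptions beyond $f, g \in L^1$ with unit mass.
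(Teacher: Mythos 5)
Your proof is correct: the identification $H_{1/2}[f\,|\,g]=H[f\,|\,m]$ with the midpoint density $m=(f+g)/2$ followed by the classical Csisz\'ar--Kullback--Pinsker inequality gives the lower bound with the constant $\tfrac18$, and the pointwise bound $\ln(1+x)\le x$ gives the upper bound (in fact the sharper $\tfrac12\|f-g\|_{L^1}$). The paper itself does not reproduce a proof but defers to \cite[Lemma A.1]{BF}, and your argument is the standard one for this ``midpoint'' relative entropy, so nothing further is needed.
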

Unlike the $L^1$ norm, $H_{1/2}$ has an explicit dissipation with respect to the Laplace operator which is given by
\begin{equation*}
	I_{1/2}\left[\,f\,|\,g\,\right] \,:=\, \int_{\R^{d_1+d_2}}\left|\nabla_{\xi}\ln{\left(\frac{2\,f}
		{
			f+g}\right)
		}\right|^2 f\,\dD\by \,\dD \xi \,.
\end{equation*}
We consider the quantity 
$\ds h\,:=\,
\left(
f+g
\right)/\,2\,
$, whose equation is obtained multiplying by $1/2$ the sum of the equations solved by $f$ and $g$, that is
\begin{equation*}
\partial_t\, h
\,+\,
\mathrm{div}_{\by}
\left[
\,\mathbf{a}
\,h\, \right]
\,+\,
\frac{\lambda(t)}{2}\,
\mathrm{div}_{\xi}
\left[\,
\left(\mathbf{b}_1
\,+\,
\mathbf{b}_3
\right)
\,f\,
+\,
\mathbf{b}_2
\,g\,\right]
\,-\,\lambda(t)^2\,
\Delta_{\xi}\,
h
\,=\,
(\delta-1)\,
\frac{\lambda(t)^2}{2}\,
\Delta_{\xi}\,
g\,.
\end{equation*}
Then, we compute the time derivative of the quantity
$
\ds
H_{1/2}
\left[\,
f\,|\,
g\,
\right]
$
integrating with respect to both $\xi$ and $\by$ the difference between the equation solved by $f$ multiplied by
$\ds
\ln{
\left(f\,/\,h
\right)
}
$
and the equation solved by $h$ multiplied by
$\ds
f\,/\,h
$. After an integration by part, it yields
\[
\frac{\dD}{\dD t}\,
H_{
% 1/2
1/2
}
\left[
\,
f
\,|\,
g
\,
\right]
\,+\,
\lambda(t)^2\,
I_{
% 1/2
1/2
}
\left[
\,
f
\,|\,
g
\,
\right]
\,=\,
\cA\,+\,\cB
% \,+\,\cC
\,,
\]
where $\cA$ and $\cB$ are given by
\begin{equation*}
\left\{
\begin{array}{l}
\displaystyle  \cA \,=\,
\frac{\lambda(t)}{2}\,
\int_{\R^{d_1+d_2}}
\left(\,
\mathbf{b}_1\,
\,-\,
\mathbf{b}_2\,
\right)\,
\nabla_{\xi}
\left(
\ln{
	\left(
	\frac{f}{h
	}
	\right)
}
\right)\,
\frac{g}{h}\,f
\, \dD \by\, \dD \xi \,
,\\[1.5em]
\displaystyle  \cB \,=\,
-\,\frac{\lambda(t)}{2}
\int_{\R^{d_1+d_2}}
\mathrm{div}_{\xi}
\left[\,
\mathbf{b}_3
\,g
\,+\,
\left(\delta
\,-\,1\right)\,\lambda(t)\,
\nabla_{\xi}\,
g\,
\,\right]\,
\frac{f}{h}\,
\dD\by\,\dD\xi
\,.
\end{array}
\right.\\[0.8em]
\end{equation*}
To estimate $\cA$, we notice that $\ds 
\left|\,
g\,/\,h\,
\right|\,\leq\,2\,
$ and we apply Young's inequality. This yields
\[
\cA
\,\leq\,
\lambda(t)^2\,
I_{
% 1/2
1/2
}
\left[
\,
f
\,|\,
g
\,
\right]
\,+\,
\frac{1}{4}\,
\int_{\R^{d_1+d_2}}
\left|\,
\mathbf{b}_1\,
\,-\,
\mathbf{b}_2\,
\right|^2 f\,
\dD\by\,\dD\xi
\,.
\]
To estimate $\cB$, we simply notice that 
$\ds 
\left|\,
f\,/\,h\,
\right|\,\leq\,2\,
$ and take the absolute value inside the integral. In the end, we obtain 
\[
\cA\,+\,\cB
\,\leq\,
\cR(t)
\,+\,
\lambda(t)^2\,
I_{
% 1/2
1/2
}
\left[
\,
f
\,|\,
g
\,
\right]
\,,
\]
where $\cR$ is defined as in Lemma \ref{abstract lemma rel ent}. Therefore, it holds
\begin{equation*}
\frac{\dD}{\dD t}\,
H_{1/2}
\left[\,
f(t)\,|\,g(t)\,
\right]
\,\leq\,
\cR(t)
\,,
\quad
\forall\,t\in\R^+\,.
\end{equation*}
Then, we deduce \eqref{estimee2:abstrat lemma} by integrating the latter inequality between $0$ and $t$ and applying Lemma \ref{entropy V.S. L1} in order to substitute $H_{1/2}$ with the $L^1$-norm.

\end{document}